\documentclass[10pt, a4paper]{amsart}
\usepackage[british]{babel}
\usepackage{amsmath}
\usepackage{amssymb}
\usepackage{amsthm}
\usepackage{graphicx}
\usepackage{url}
\usepackage{enumerate}
\usepackage{xcolor}
\usepackage{mathrsfs} 
\usepackage{t1enc}
\usepackage{mathtools}
\usepackage{placeins} 
\usepackage{tikz}
\usepackage{textcomp}
\usepackage{appendix}
\usetikzlibrary{matrix}
\usetikzlibrary{arrows,automata}
\usetikzlibrary{decorations.pathreplacing}
\usetikzlibrary{shapes.geometric}
\usepackage[colorlinks=true,allcolors=blue]{hyperref}

\newtheorem{thm}{Theorem}[section]
\newtheorem{lem}[thm]{Lemma}
\newtheorem{prop}[thm]{Proposition}
\newtheorem{obs}[thm]{Observation}
\newtheorem{cor}[thm]{Corollary}

\newtheorem{que}{Question}

\theoremstyle{definition}
\newtheorem{defn}[thm]{Definition}
\newtheorem{rem}[thm]{Remark}

\DeclareMathOperator{\supp}{supp} 

\DeclareMathOperator{\Int}{int}
\DeclareMathOperator{\dist}{dist}

\DeclareMathOperator{\diam}{diam}

\DeclareMathOperator{\scr}{cr}
\DeclareMathOperator{\Fl}{Fl}

\newcommand{\set}[1]{\left\{#1\right\}}

\newcommand{\eps}{\varepsilon}
\newcommand{\R}{\mathbb{R}}

\newcommand{\N}{\mathbb{N}}

\DeclareMathOperator{\id}{id}

\begin{document}

\title[Parametrized family of pseudo-arc attractors]{Parametrized family of pseudo-arc attractors: physical measures and prime end rotations}

\author{Jernej \v Cin\v c}
\address[J. \v Cin\v c]{Faculty of Mathematics, University of Vienna, Oskar-Morgenstern-Platz 1, A-1090 Vienna, Austria -- and -- National Supercomputing Centre IT4Innovations, University of Ostrava,
	IRAFM,
	30. dubna 22, 70103 Ostrava,
	Czech Republic}
\email{cincj9@univie.ac.at}

\author{Piotr Oprocha}
\address[P. Oprocha]{
	AGH University of Science and Technology, Faculty of Applied Mathematics,
	al. Mickiewicza 30,
	30-059 Krak\'ow,
	Poland -- and -- National Supercomputing Centre IT4Innovations, University of Ostrava,
	IRAFM,
	30. dubna 22, 70103 Ostrava,
	Czech Republic}
\email{oprocha@agh.edu.pl}
\begin{abstract}
	The main goal of this paper is to study topological and measure-theoretic properties of an intriguing family of strange planar attractors.
	Building towards these results, we first show that any generic Lebesgue measure-preserving map $f$ generates the pseudo-arc as inverse limit with $f$ as a single bonding map.
	These maps can be realized as attractors of disc homeomorphisms in such a way that the attractors vary continuously (in Hausdorff distance on the disc) with the change of bonding map as a parameter.
	Furthermore, for generic Lebesgue measure-preserving maps $f$ the background Oxtoby-Ulam measures induced by Lebesgue measure for $f$ on the interval are physical on the disc and in addition there is a dense set of maps $f$ defining a unique physical measure. Moreover, the family of physical measures on the attractors varies continuously in the weak* topology; i.e. the parametrized family is statistically stable.
	We also find an arc in the generic Lebesgue measure-preserving set of maps and construct a family of disk homeomorphisms parametrized by this arc which induces a continuously varying family of pseudo-arc attractors with prime ends rotation numbers varying continuously in $[0,1/2]$. It follows that there are uncountably many dynamically non-equivalent embeddings of the pseudo-arc in this family of attractors. 
\end{abstract}
\subjclass[2020]{Primary 37E05, 37E30, 37B45, 37C20; Secondary 37A10, 37C15, 37E45, 37C75}
\keywords{Lebesgue measure, interval maps, pseudo-arc, Brown-Barge-Martin embeddings, strange attractors, dense set of periodic points}
\maketitle
\tableofcontents
\section{Introduction}
The main goal of this paper is to study topological and measure-theoretic properties of an intriguing family of strange planar attractors. Our study is  motivated by the advances of Wang and Young \cite{WY1,WY2} where they give an approach to study measure-theoretic properties of a large class of strange attractors with one direction of instability. Furthermore, there has been recent major advances by Boyland, de Carvalho and Hall who provided the first detailed description of a family of strange attractors arising from unimodal inverse limits both from topological \cite{3G-BM,BdCH1,BdCHInvent} and measure-theoretic perspective \cite{BdCH,BdCHNew}. The last mentioned results in particular focused on the family of tent inverse limits for which Barge, Bruin and \v Stimac \cite{BBS} have proven that the spaces are non-homeomorphic for different slopes in $(\sqrt{2},2]$. Our proposed (and studied) family of strange attractors also exhibits one direction of instability and several good measure-theoretic properties but is different from the last described family of tent inverse limits as follows. We provide a parametrized family of strange attractors where all the attracting sets are homeomorphic but nevertheless, as we shall see later, they exhibit a variety of rich dynamical behavior and have good measure-theoretic and statistical properties. The attracting sets of this family are all homeomorphic to the one-dimensional space of much interest in Continuum Theory and beyond, called the pseudo-arc. A {\em continuum} is a nonempty compact connected metric space. The pseudo-arc may be regarded as the most intriguing planar continuum not separating the plane. On the one hand its structure is quite complicated, since it does not contain any arc. On the other hand it reflects much regularity in its shape, since it is homeomorphic to any of its proper subcontinua.
For the history of the pseudo-arc and numerous results connecting it to other mathematical fields we refer the reader to the introduction in \cite{BCO}.  Our results here can also be viewed as a connecting link between Continuum Theory and Measure Theory since (among other results) we show that the natural extension (in the dynamically precise sense, cf. Subsection~\ref{subsec:results}) of topologically generic dynamics on the interval maps that preserve Lebesgue measure $\lambda$ lives on the pseudo-arc.

\subsection{Statements of the results}\label{subsec:results}
In what follows, by a {\em residual} set we mean a dense $G_{\delta}$ set and we call a property {\em generic} if it is satisfied on at least a residual set of the underlying Baire space.
In this subsection we will state and comment the main results of this paper. In the first part of the paper we focus our study on the class of continuous interval maps that preserve the Lebesgue measure $\lambda$ which we denote by $C_{\lambda}(I)$. If one equips this space with the metric of uniform convergence it becomes a complete space (see e.g. Proposition~4 in \cite{BCOT}). The study of properties of generic maps of $C_{\lambda}(I)$ was initiated in \cite{B} and continued recently in \cite{BT} and \cite{BCOT}; among other results it was proven in \cite{BT} that the generic maps are locally eventually onto (leo) and measure-theoretically weakly mixing. In \cite{BCOT} the authors focused on periodic properties of the generic maps and, among other results, they completely characterized their sets of periodic points of any period, determined their Hausdorff and upper box dimension, and proved that these maps have the shadowing and periodic shadowing property.\\
Here we prove another topological property of Lebesgue measure-preserving maps, which might be the most surprising of the properties yet; namely we prove:

\begin{thm}\label{thm:UniLimPresLeb}
There is  a dense $G_\delta$ set $\mathcal{T}\subset C_{\lambda}(I)$ such that if $f\in \mathcal{T}$ then
for every $\delta>0$ there exists a positive integer $n$ so that $f^n$ is $\delta$-crooked. 
\end{thm}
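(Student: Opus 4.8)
The plan is to realise $\mathcal{T}$ as a countable intersection of open dense subsets of $C_{\lambda}(I)$, which by the result recalled in the introduction is a complete (hence Baire) metric space. For $\delta>0$ set
\[
A_\delta\defeq\set{\,f\in C_{\lambda}(I)\,:\,f^{n}\text{ is }\delta\text{-crooked for some }n\ge1\,}.
\]
Showing each $A_\delta$ is open is the routine half. For a fixed continuous surjection $h$, being $\delta$-crooked means that for every pair $x\le y$ there are $x\le s\le t\le y$ with $|h(s)-h(y)|<\delta$ and $|h(t)-h(x)|<\delta$; the optimal ``gap'' $\Phi_h(x,y)$ witnessing this is upper semicontinuous in $(x,y)$ on the compact set $\set{(x,y):x\le y}$ and depends Lipschitz-continuously on $h$ in the uniform norm, so a $\delta$-crooked map is in fact $(\delta-\eta)$-crooked for some $\eta>0$, and this survives small $C^0$ perturbations. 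Since $g\mapsto g^{n}$ is continuous on $C_{\lambda}(I)$, each $\set{f:f^{n}\text{ is }\delta\text{-crooked}}$ is open, hence so is $A_\delta$. Thus $\mathcal{T}\defeq\bigcap_{k\ge1}A_{1/k}$ is $G_\delta$, and every $f\in\mathcal{T}$ satisfies the conclusion (given $\delta$, pick $k>1/\delta$). It remains to prove that each $A_\delta$ is dense.

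For density, fix $f\in C_{\lambda}(I)$ and $\eps,\delta>0$, and seek $g\in C_{\lambda}(I)$ with $\|g-f\|_\infty<\eps$ and some $g^{n}$ $\delta$-crooked. Since the piecewise linear Lebesgue measure-preserving maps are dense in $C_{\lambda}(I)$ and, by \cite{BT}, the leo maps form a residual (in particular dense) subset, we may first move $f$ to a nearby piecewise linear, Lebesgue measure-preserving, leo map and then perturb that. The idea is to install on $f$ a fine ``crooked microstructure'': partition $I$ into $N$ intervals $J_1,\dots,J_N$ of length $1/N$ with $N$ so large that $\mathrm{osc}(f,J_i)<\eps/3$ for all $i$, and on each $J_i$ replace $f$ by a uniform ``$m_i$-tooth'' map whose graph folds back and forth inside the band $\set{(x,t):|t-f(x)|<\eps}$, the folding directions dictated by a prescribed recursively crooked pattern. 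Teeth of equal horizontal width preserve $\lambda$, and confinement to the $\eps$-band keeps $\|g-f\|_\infty<\eps$ and — for $\eps$ small — keeps $g$ leo. One application of $g$ then already exhibits a crooked doubling-back, albeit only at the scale of the band; crookedness at the finer scale $\delta$ is recovered in an iterate because $g$ is leo, so for any nondegenerate $[x,y]$ the images $g^{m}([x,y])$ eventually fill $I$, and each further application of $g$ re-folds the graph so that the crooked pattern compounds and its scale shrinks to $0$. Choosing $N$ large and then $n$ large enough, the graph of $g^{n}$ over every $[x,y]$ runs, in the correct order, from $g^{n}(x)$ to within $\delta$ of $g^{n}(y)$, back to within $\delta$ of $g^{n}(x)$, and finally to $g^{n}(y)$; that is, $g^{n}$ is $\delta$-crooked. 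Equivalently, one may phrase the construction as producing, near $f$, a measure-preserving piecewise linear map with a periodic orbit whose induced Markov combinatorics is so crooked that a suitable power of the transition is $\delta$-crooked, such a crooked Markov block always being insertable on a small scale while respecting $\lambda$. This ``compounding of crookedness under composition'' is exactly the mechanism behind the classical construction of a transitive interval map whose inverse limit is the pseudo-arc, here adapted so as to preserve Lebesgue measure.

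The technical core, and the step I expect to be the main obstacle, is the combinatorial lemma making the last claim precise: one must show that the largest $\delta$ for which $g^{n}$ \emph{fails} to be $\delta$-crooked tends to $0$ as $n\to\infty$, \emph{uniformly} over all pairs $x<y$ — including pairs far closer together than the shortest lap of $g^{n}$, where $g^{n}$ has enormous slope and hence no automatic small oscillation. This is precisely why the installed microstructure must be recursively (self-similarly) crooked rather than a single fixed fold, and why the book-keeping is delicate: one must control simultaneously (i) $\|g-f\|_\infty<\eps$; (ii) \emph{exact} $\lambda$-preservation, which rigidly constrains the admissible folds; (iii) the leo property of $g$, so that orbits of intervals genuinely exhaust $I$ and the pattern has room to develop; and (iv) not merely the presence but the exact order of the folds of $g^{n}$ over \emph{every} subinterval. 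Organising the perturbation as a finite sequence of successively finer measure-preserving foldings — at each stage fixing the target scale first and only afterwards choosing how many times to iterate — should reconcile these four constraints and complete the proof that $A_\delta$ is dense, whence $\mathcal{T}=\bigcap_{k}A_{1/k}$ is the desired residual set.
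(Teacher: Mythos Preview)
Your overall architecture --- write the desired set as $\bigcap_k A_{1/k}$, check openness via the $C^0$-stability of $\delta$-crookedness (the paper's Lemma~\ref{lem:crooked}), and prove density by perturbation --- matches the paper exactly. The openness argument is fine.

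The gap is in the density step, and you identify it yourself: ``the technical core, and the step I expect to be the main obstacle, is the combinatorial lemma making the last claim precise''. This lemma \emph{is} the theorem; everything else is routine. Your description of installing ``recursively crooked microstructure'' via equal-width tooth maps, and then asserting that crookedness ``compounds'' under iteration of a leo map so that some $g^n$ becomes $\delta$-crooked, is not a proof but a hope. Two specific problems: (i) it is not true in general that iterating a leo map with some mild folding eventually produces $\delta$-crookedness for arbitrarily small $\delta$ --- the Minc--Transue mechanism requires very particular quantitative hypotheses on the perturbation (their Proposition~5), not just ``leo plus folds''; (ii) the simultaneous constraints you list (exact $\lambda$-preservation, closeness to $f$, and the precise order of folds at \emph{every} scale) are genuinely in tension, and nothing in your sketch shows how to reconcile them.

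What the paper actually does is construct, for each odd $n\ge 7$ and $k\ge 1$, an explicit piecewise-linear map $\lambda_{n,k}\in C_\lambda(I)$ close to the identity, built from the Lewis--Minc crooked maps $\sigma_n$ together with a ``flip'' correction near the endpoints. Proposition~\ref{prop:LebPres} verifies (by a careful box-counting argument) that $\lambda_{n,k}$ preserves Lebesgue measure, and Lemma~\ref{lem:MincUpdt} verifies that $\lambda_{n,k}$ satisfies the exact list of properties (i)--(iii)(a)--(c) that play the role of Minc--Transue's Proposition~5. With this in hand, Lemma~\ref{lem:LemMT} sets $F=f\circ\lambda_{n,k}$ and invokes the Minc--Transue Lemma verbatim to obtain an $n$ with $F^n$ $\delta$-crooked; since $\lambda_{n,k}\in C_\lambda(I)$, so is $F$. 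The point is that the perturbation is not a generic ``tooth'' construction but a single, globally defined near-identity map whose graph is itself built from the $\sigma_n$; the measure-preservation and the Minc--Transue estimates are checked by hand and occupy most of Section~\ref{sec:crooked}. Your proposal would need to supply an equivalent of Lemma~\ref{lem:MincUpdt}, and nothing short of that will close the argument.
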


The $\delta$-crookedness is not an easy-to-state property (see Definition~\ref{def:crooked}), since it imposes strong requirements on values of the map. However, $\delta$-crookedness in the sense of Theorem~\ref{thm:UniLimPresLeb} completely characterizes the maps for which the inverse limit is the pseudo-arc (\cite{BingPacific} and Proposition 4 in \cite{MT}). Thus we obtain the following corollary.

\begin{cor}\label{cor:MincTransue}
The inverse limit with any $C_{\lambda}(I)$-generic map as a single bonding map is the pseudo-arc.
\end{cor}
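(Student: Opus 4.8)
\textbf{Proof proposal for Corollary~\ref{cor:MincTransue}.}
The plan is to deduce the corollary directly from Theorem~\ref{thm:UniLimPresLeb} together with the topological characterization of the pseudo-arc via crookedness of bonding maps. First I would recall the relevant characterization: by Bing's work (\cite{BingPacific}) and its reformulation in terms of a single bonding map (Proposition~4 in \cite{MT}), an inverse limit $\varprojlim(I,f)$ with a surjective bonding map $f\colon I\to I$ is homeomorphic to the pseudo-arc precisely when $f$ is, up to taking iterates, arbitrarily crooked; concretely, $\varprojlim(I,f)$ is the pseudo-arc if and only if for every $\delta>0$ there is a positive integer $n$ with $f^n$ being $\delta$-crooked. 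So the entire topological content is already packaged in that statement.

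Next I would verify that the hypotheses of this characterization are met by any $f$ in the dense $G_\delta$ set $\mathcal{T}\subset C_\lambda(I)$ produced by Theorem~\ref{thm:UniLimPresLeb}. The crookedness condition ``for every $\delta>0$ there exists $n$ with $f^n$ $\delta$-crooked'' is \emph{verbatim} the conclusion of Theorem~\ref{thm:UniLimPresLeb}, so that is immediate. The only additional hypothesis needed is surjectivity of the bonding map: every $f\in C_\lambda(I)$ is onto, since a Lebesgue measure-preserving continuous map of $I$ cannot miss any subinterval (its image is a subinterval of full measure, hence all of $I$). With surjectivity and arbitrarily crooked iterates in hand, the cited characterization gives that $\varprojlim(I,f)$ is homeomorphic to the pseudo-arc.

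Finally I would pass from ``residual'' to ``generic'' in the statement. Since $\mathcal{T}$ is a dense $G_\delta$ subset of the Baire space $C_\lambda(I)$, any $C_\lambda(I)$-generic map lies in $\mathcal{T}$ (generic meaning: holding on at least a residual set, as fixed in the terminology of Subsection~\ref{subsec:results}), and hence its inverse limit is the pseudo-arc. This completes the deduction.

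I do not expect any serious obstacle here: the corollary is a one-step consequence once Theorem~\ref{thm:UniLimPresLeb} is available and the pseudo-arc characterization is correctly quoted. The only point requiring a line of care is checking that the characterization from \cite{BingPacific,MT} is stated in exactly the ``$\forall\delta\,\exists n$'' iterate form that matches Theorem~\ref{thm:UniLimPresLeb}, rather than in the a~priori stronger form requiring $f$ itself (not an iterate) to be $\delta$-crooked; this is precisely why the reformulation in Proposition~4 of \cite{MT} is cited alongside Bing's original result.
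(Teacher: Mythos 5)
Your proposal is correct and follows essentially the same route as the paper: the paper states Proposition~4 of \cite{MT} as a lemma (if for every $\eps>0$ some iterate $f^n$ is $\eps$-crooked then $\underleftarrow{\lim}(I,f)$ is the pseudo-arc) and combines it directly with Theorem~\ref{thm:UniLimPresLeb}. Your extra checks (surjectivity of maps in $C_\lambda(I)$ and the residual/generic terminology) are harmless refinements of the same one-step deduction.
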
 

One should note that all interesting ''global`` dynamics in interval maps can be reflected in Lebesgue measure-preserving maps (see e.g. Remark in \cite{BCOT} or Remark~\ref{rem:equivalent} below); for any non-atomic measure on the interval with full support, we obtain the result analogous to Theorem~\ref{thm:UniLimPresLeb}.

At this point, let us mention that one can view inverse limits with single bonding maps as the simplest invertible dynamical extensions of the dynamics given by the bonding map. Let us state this fact more precisely. Denote by $\hat I:=\underleftarrow{\lim}(I,f)$ and let $\hat f\colon \hat I\to \hat I$ be the {\em natural extension of $f$} (or the {\em shift homeomorphism}). A {\em natural projection} $\pi_0:\hat I\to I$ defined by $\pi_0(x)=x_0$ {\em semi-conjugates} $\hat{f}$ to $f$.
	\begin{figure}[!ht]
		\centering
		\vspace{-0.5cm}
		\begin{tikzpicture}[->,>=stealth',auto, scale=1.3]
		\node (1) at (0,0) {$I$};
		\node (2) at (1,0) {$I$};
		\node (3) at (1,1) {$\hat{I}$};
		\node (4) at (0,1) {$\hat{I}$};
		\draw [->] (1) -- node[above]{\hspace{0.1cm}\small $f$} (2);
		\draw [->] (3) -- node[right]{\small $\pi_0$} (2);
		\draw [->] (4) -- node[above]{\small $\hat f$} (3);
		\draw [->] (4) -- node[left]{\small $\pi_0$} (1);
		\end{tikzpicture}
	\end{figure}
	
	For a continuum $Y$ let $g:Y\to Y$ be an invertible dynamical system and let $p:Y\to I$ factor $g$ to $f$. Then $p$ factors through $\pi_0$: i.e. $\hat f$ is the minimal invertible system which extends $f$. 
	
	\begin{figure}[!ht]
		\centering
		\vspace{-0.45cm}
		\hspace{1.2cm}
		\begin{tikzpicture}[->,>=stealth',auto, scale=1.3]
		\node (1) at (0,0) {$I$};
		\node (2) at (1,0) {$I$};
		\node (3) at (1,1) {$\hat{I}$};
		\node (4) at (0,1) {$\hat{I}$};
		\node (5) at (1,2) {$Y$};
		\node (6) at (0,2) {$Y$};
		\draw [->] (1) -- node[above]{\hspace{0.1cm}\small $f$} (2);
		\draw [->] (3) -- node[right]{\small $\pi_0$} (2);
		\draw [->] (4) -- node[above]{\small $\hat f$} (3);
		\draw [->] (4) -- node[left]{\small $\pi_0$} (1);
		\draw [->] (5) -- node[right]{\small $\pi$} (3);
		\draw [->] (6) -- node[left]{\small $\pi$} (4);
		\draw [->] (6) -- node[above]{\small $g$} (5);
		\draw [->,red] (5.north) to [out=150,in=30] (2.north);
		\draw [->,red] (6.north) to [out=150,in=30] (1.north);
		\node at (2,1) {\small {\color{red} $p=\pi_0\circ \pi$}};
		\end{tikzpicture}
		\caption{$\hat f$ is the minimal invertible system which extends $f$.}\label{diagram}
	\end{figure}

 Denote by $C_{DP}(I)\subset C(I)$ the class of interval maps with the dense set of periodic points and by $\overline{C_{DP}(I)}$ its closure. Building on the well known properties of interval maps mentioned earlier (see Remark~\ref{rem:equivalent}) we obtain the following result.

\begin{cor}\label{cor:MincTransue1}
The inverse limit with any $\overline{C_{\textrm{DP}}(I)}$-generic map as a single bonding map is the pseudo-arc.
\end{cor}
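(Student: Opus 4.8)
The plan is to reduce, as in Corollary~\ref{cor:MincTransue}, to the Bing--Minc--Transue characterization: it suffices to show that the \emph{crookedness-of-iterates} property
\[
  \mathcal{P}\defeq\set{\,g\in C(I): \text{for every }\delta>0\text{ there is }n\in\N\text{ with }g^{n}\text{ }\delta\text{-crooked}\,}
\]
is residual in $\overline{C_{DP}(I)}$. Here $\overline{C_{DP}(I)}$ is a Baire space, being closed in the complete space $(C(I),\|\cdot\|_\infty)$; and every $g\in\overline{C_{DP}(I)}$ is surjective, since a map with dense periodic points has dense, hence closed-and-dense, hence full image, and a uniform limit of surjections of $I$ is again surjective. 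Thus \cite{BingPacific} together with Proposition~4 in \cite{MT} will give $\underleftarrow{\lim}(I,g)\cong$ pseudo-arc for every $g\in\mathcal{P}\cap\overline{C_{DP}(I)}$, proving the statement once $\mathcal{P}\cap\overline{C_{DP}(I)}$ is shown to be a dense $G_\delta$ in $\overline{C_{DP}(I)}$.

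For the $G_\delta$ part I would argue exactly as in the proof of Theorem~\ref{thm:UniLimPresLeb}: for fixed $n,\delta$ the set $\set{g:g^{n}\text{ is }\delta\text{-crooked}}$ is closed, and, using continuity of $g\mapsto g^{n}$ in $\|\cdot\|_\infty$ together with the triangle inequality applied to the defining inequalities of Definition~\ref{def:crooked}, $(\delta/2)$-crookedness of $g^{n}$ is stable under sufficiently small uniform perturbations. Setting $\mathcal{C}_\delta\defeq\bigcup_{n}\set{g:g^{n}\text{ is }\delta\text{-crooked}}$, this yields $\mathcal{C}_{\delta/2}\subseteq\Int\mathcal{C}_\delta$, hence $\mathcal{P}=\bigcap_{k}\mathcal{C}_{1/k}=\bigcap_{k}\Int\mathcal{C}_{1/k}$ is $G_\delta$ in $C(I)$, so $\mathcal{P}\cap\overline{C_{DP}(I)}$ is $G_\delta$ in $\overline{C_{DP}(I)}$. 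I do not expect difficulty here; this merely re-packages the mechanism already needed for Theorem~\ref{thm:UniLimPresLeb}.

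The real content is density of $\mathcal{P}\cap\overline{C_{DP}(I)}$ in $\overline{C_{DP}(I)}$, and here I would invoke the reduction advertised right after Theorem~\ref{thm:UniLimPresLeb} (the Remark in \cite{BCOT} and Remark~\ref{rem:equivalent}): maps preserving \emph{some} non-atomic Borel probability measure of full support on $I$ are dense in $\overline{C_{DP}(I)}$; for each such measure $\mu$ one has the analogue of Theorem~\ref{thm:UniLimPresLeb}, i.e. $\mathcal{P}\cap C_\mu(I)$ is residual in $C_\mu(I)$; and $C_\mu(I)\subseteq\overline{C_{DP}(I)}$, since the generic $\mu$-preserving map has a dense set of periodic points (the $\mu$-analogue of \cite{BCOT}). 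Given $g\in\overline{C_{DP}(I)}$ and $\eps>0$, a two-step approximation — first to some $h_0\in C_\mu(I)$ within $\eps/2$, then to $h\in\mathcal{P}\cap C_\mu(I)$ within $\eps/2$ — produces $h\in\mathcal{P}\cap\overline{C_{DP}(I)}$ with $\|h-g\|_\infty<\eps$. The main obstacle is exactly this reduction: $C^0$-approximating an arbitrary map with dense periodic points by one preserving a suitably adapted non-atomic full-support measure (equivalently, by a topological conjugate of a Lebesgue-preserving map — note $\mathcal{P}$ is a conjugacy invariant, since conjugate bonding maps have homeomorphic inverse limits). Since this is precisely the content of the cited remarks, the corollary then follows by combining the density with the $G_\delta$ property and the first paragraph.
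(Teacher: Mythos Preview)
Your argument is correct and follows essentially the same route as the paper: the paper proves the corollary via Theorem~\ref{thm:dense}, whose proof uses Remark~\ref{rem:equivalent} to conjugate a map $f\in C_{DP}(I)$ to some $F\in C_\lambda(I)$ via the distribution function $h(x)=\mu([0,x])$, perturbs $F$ to a crooked $G$ using Theorem~\ref{thm:UniLimPresLeb}, and conjugates back, checking by hand that the crookedness-of-iterates property survives the conjugacy; the $G_\delta$ part is handled exactly as you do, via Lemma~\ref{lem:crooked}.

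The only organizational differences are cosmetic. You phrase the density step as a two-stage approximation through some $C_\mu(I)$ and invoke the ``$\mu$-analogue of Theorem~\ref{thm:UniLimPresLeb}'', which is precisely what the paper's conjugacy argument yields; and you bypass the paper's explicit verification that crookedness transfers under conjugacy by the cleaner observation that $\mathcal P$ is a conjugacy invariant because conjugate bonding maps have homeomorphic inverse limits. One small remark: your ``main obstacle'' (approximating $g\in\overline{C_{DP}(I)}$ by a measure-preserving map) is in fact trivial once Remark~\ref{rem:equivalent} is in hand, since any $h_0\in C_{DP}(I)$ already preserves such a measure, so the first approximation step is just density of $C_{DP}(I)$ in its closure.
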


Interval maps with dense set of periodic points were popularized by the work of Li and Yorke \cite{LiYorke} where such maps were called ``chaotic'' for the first time. This line of work saw numerous applications in different branches of mathematics and beyond. Our last result above can also be viewed as a continuation of study initiated by Barge and Martin \cite{BM1,BM2,BM3} in the generic setting. 
Corollaries~\ref{cor:MincTransue} and \ref{cor:MincTransue1} seem quite unexpected, taking into account earlier genericity results about inverse limits of interval maps; in particular, it was proven by Block, Keesling and Uspenskij \cite{BKU} that the set of interval maps that produce pseudo-arc in the inverse limit are nowhere dense in $C(I)$ (where $C(I)$ denotes the class of all continuous interval maps). On the other hand, Bing \cite{BingPacific} has shown that for any manifold $M$ of dimension at least $2$, the set of subcontinua homeomorphic to the pseudo-arc is a dense residual subset of the set of all subcontinua of $M$ (equipped with the Vietoris topology).

Inverse limit spaces are often not Euclidean spaces and thus it usually (also often in our case) makes no sense to speak about Lebesgue measure on the inverse limit. However, any invariant probability measure lifts to a shift-invariant measure on the inverse limit space (see \cite{KRS}). In particular, if we have a locally eventually onto bonding map on a Euclidean space, then the measure on the inverse limit can be seen as an extension of the measure on the underlying Euclidean space over Cantor set fibers. Precise definitions of these concepts are given later in the paper (see Definition~\ref{def:induced}). In standard terms,  a measure $\mu$ on a manifold is {\em physical} if the set of its regular points of $\mu$ has positive measure with respect to a background Lebesgue measure. 
It was proven in \cite{KRS} that if an Euclidean space admits a physical measure, the shift-invariant measure on the inverse limit space is also physical. 
If we combine the last theorem, corollary and the results from \cite{BT}, \cite{Li} and \cite{Br} (see also the survey \cite{CL} on dynamical properties that extend to inverse limit spaces) we get the following. Note that the following corollary also contributes to the study of possible homeomorphisms on the pseudo-arc.

\begin{cor}\label{cor:Typical}
Let $\mathcal{T}$ be a dense $G_{\delta}$ subset of $C_{\lambda}(I)$ from Theorem~\ref{thm:UniLimPresLeb}.
There is a dense $G_{\delta}$ subset $\mathcal{T}'\subset \mathcal{T}\subset C_{\lambda}(I)$ so that for every $f\in \mathcal{T}'$ the inverse limit $\hat I_f:=\underleftarrow{\lim}(I, f)$ is the pseudo-arc and the natural extensions of maps from $\mathcal{T}'$ give rise to complete space $\hat{\mathcal{T}'}$ of homeomorphisms on the pseudo-arc $\hat I_f$ so that every $\hat f\in \hat{\mathcal{T}}'$:
\begin{enumerate}
 \item preserves induced inverse limit $\hat{m}$ measure on $\hat I_f$,
 \item induced inverse limit measure $\hat{m}$ is physical and weakly mixing on $\hat I_f$,
 \item\label{cor:item4}  is transitive,
 \item has infinite topological entropy,
 \item  has the shadowing property,
 \item has a Cantor set of periodic points of any period. 
\end{enumerate}
\end{cor}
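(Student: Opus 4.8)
The plan is to build $\mathcal{T}'$ as a finite intersection of residual subsets of $C_\lambda(I)$ and then to transfer, one at a time, the listed properties of $f$ to its natural extension $\hat f$. First I would put
\[
  \mathcal{T}' \defeq \mathcal{T}\cap\mathcal{T}_{\mathrm{leo}}\cap\mathcal{T}_{\mathrm{per}},
\]
where $\mathcal{T}_{\mathrm{leo}}\subset C_\lambda(I)$ is the residual set from \cite{BT} on which $f$ is locally eventually onto and measure-theoretically weakly mixing with respect to $\lambda$, and $\mathcal{T}_{\mathrm{per}}\subset C_\lambda(I)$ is the residual set from \cite{BCOT} on which $f$ has the shadowing property and a Cantor set of periodic points of every period (infinite topological entropy of $f$ being a further generic property in $C_\lambda(I)$, which one folds into the same intersection). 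Since $C_\lambda(I)$ with the uniform metric is complete, hence Baire, the set $\mathcal{T}'$ is again dense $G_\delta$, and by Alexandrov's theorem it is completely metrizable; transporting this structure along the injection $f\mapsto\hat f$ produces a complete space $\hat{\mathcal{T}'}$ of shift homeomorphisms. For each $f\in\mathcal{T}'\subset\mathcal{T}$, Corollary~\ref{cor:MincTransue} identifies $\hat I_f$ with the pseudo-arc, so $\hat{\mathcal{T}'}$ is indeed a complete family of pseudo-arc homeomorphisms.

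Now fix $f\in\mathcal{T}'$. Item (1) is immediate: by construction of the induced inverse limit measure (Definition~\ref{def:induced}), $\hat m$ is the lift of the $f$-invariant measure $\lambda$, hence $\hat f$-invariant. For item (2), weak mixing of $(f,\lambda)$ passes to its natural extension $(\hat f,\hat m)$ by the classical fact that a measure-preserving system and its natural extension share their spectral, in particular their mixing, properties; and since $(f,\lambda)$ is ergodic, Birkhoff's ergodic theorem shows that $\lambda$-almost every point of $I$ is generic for $\lambda$, so $\lambda$ is physical, and the transfer theorem of \cite{KRS} then yields that $\hat m$ is physical on $\hat I_f$ relative to the Oxtoby--Ulam background measure carried over through the Brown--Barge--Martin realization on the disc.

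Items (3)--(5) follow from the standard inverse-limit transfer principles (see \cite{Li} and the survey \cite{CL}): leo-ness---a fortiori transitivity---of $f$ implies transitivity of $\hat f$; topological entropy is preserved under passage to the natural extension, so $\htop(\hat f)=\htop(f)=\infty$; and the shadowing property of $f$ lifts to the shadowing property of $\hat f$ (\cite{Br},\cite{CL}). For item (6), observe that if $f^n(x_0)=x_0$ then $f$ permutes the finite orbit $\{x_0,f(x_0),\dots,f^{n-1}(x_0)\}$ bijectively, so $x_0$ admits a unique backward orbit inside $\hat I_f$, namely $x_j=f^{(n-j)\bmod n}(x_0)$; consequently $\pi_0$ restricts to a continuous bijection---hence, by compactness, a homeomorphism---from $\{\hat x\in\hat I_f:\hat f^n(\hat x)=\hat x\}$ onto $\{x\in I:f^n(x)=x\}$, and likewise on the subsets of points of exact period $n$. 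Since the latter sets are Cantor sets by the choice of $\mathcal{T}_{\mathrm{per}}$, so are the former, which is (6).

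The Baire-category bookkeeping and the period-$n$ projection argument are routine. The substantive inputs are Theorem~\ref{thm:UniLimPresLeb}/Corollary~\ref{cor:MincTransue}, identifying $\hat I_f$ with the pseudo-arc, together with the transfer principles---physicality \cite{KRS}, weak mixing, transitivity, entropy, shadowing---which we import rather than reprove. The point genuinely requiring care is verifying that the topology in which $\hat{\mathcal{T}'}$ is declared ``complete'' really does correspond, under $f\mapsto\hat f$, to the completely metrizable structure that $\mathcal{T}'$ inherits as a dense $G_\delta$ subset of $C_\lambda(I)$; this is where one must be precise about how the natural extensions (and, later, the attractors) are topologized.
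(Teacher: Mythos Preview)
Your proposal is correct and follows essentially the same approach as the paper: intersect $\mathcal{T}$ with the residual sets from \cite{BT} and \cite{BCOT}, invoke Alexandrov's theorem for the complete-metrizability of $\mathcal{T}'$, and then transfer each property to $\hat f$ via the standard inverse-limit principles (\cite{Li}, \cite{Br}, \cite{CL}, \cite{KRS}). The paper's own proof is considerably terser---it simply cites \cite{BT} and \cite{BCOT} and says ``the last two items follow directly''---whereas you supply the explicit argument that $\pi_0$ restricts to a homeomorphism between $\mathrm{Fix}(\hat f^n)$ and $\mathrm{Fix}(f^n)$, which is a welcome clarification the paper leaves implicit.
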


Note that the preceding result works also for generic maps in the class of maps preserving any other fully supported probability measure absolutely continuous with respect to the Lebesgue measure.

The results above serve as the preparatory results for our study of a family of strange attractors. The tool that we apply is the so-called Brown-Barge-Martin (BBM) embedding of inverse limits of topological graphs (see~\cite{BM} and~\cite{Br}). This approach yielded surprising new examples for topological dynamical systems as we explain in this paragraph. 
A particularly useful extension of this method is provided by the parametrized version of BBM embedding (and we will use this method in the following theorem but not in the main theorem of the paper), given by Boyland, de Carvalho and Hall \cite{3G-BM}. The same authors used this method as a tool to find new rotation sets for torus homeomorphisms (see \cite{BdCHInvent}) and to study prime ends of natural extensions of unimodal maps (see \cite{BdCH}). Very recently, Boro\'nski, \v Cin\v c and Liu used an adaptation of the BBM technique to provide several new examples in the dynamics on the $2$-sphere, with the particular emphasis on better understanding the induced boundary dynamics of invariant domains in parametrized families. 
The above mentioned BBM technique enables us to present the inverse limit of any interval map as a planar attractor. The problem with the standard approach, however, is that BBM embeddings done for two maps separately may be incomparable. In fact, it may happen in practice that arbitrarily close maps may generate
	quite distant attractors (e.g. in terms of Hausdorff distance) and also the other extreme is possible. While we use a large collection of different interval maps, their
	inverse limit is the pseudo-arc, that is, these inverse limits are homeomorphic. It may therefore happen, that BBM results in conjugate systems,
	i.e. two maps define dynamically the same systems.
	The following result allows comparison of attractors, providing 
	continuous dependence between the shape of attractors and distance between interval maps inducing them. It also ensures that we construct
	numerous non-conjugate dynamical systems.
	
Denote by $C(X,Y)$ (respectively, $\mathcal{H}(X,X)$) the set of all continuous mappings from a metric space $X$ to a metric space $Y$ (respectively, the set of all homeomorphisms of $X$). We equip the space $C(X,Y)$ with the metric of uniform convergence $\rho$. Let $D\subset \mathbb{R}^2$ denote a closed topological disk. We say that a compact set $K\subset D\subset \mathbb{R}^2$ is the {\em (global) attractor} of $h\colon D\to D$ in $D$  if for every $x\in D \setminus \partial D$, the {\em omega limit set} {$\omega_h(x)\subset K$  and for some $z\in K$ we have that $\omega_h(z)=K$.} 

To a non-degenerate and non-separating continuum $K\subset D\setminus \partial D\subset \mathbb{R}^2$ we can associate the {\em circle of prime ends} as the compactification of $D\setminus K$.
If $\tilde{h}:\mathbb{R}^2\to \mathbb{R}^2$ preserves orientation and $\tilde h(K)=K$, $\tilde h(D)=D$ then $\tilde h$ induces an orientation preserving homeomorphism of the prime ends circle, and therefore it gives a natural \emph{prime ends rotation number}. 
The prime ends rotation number allows one to study boundary dynamics of underlying global attractors and distinguish their embeddings from dynamical point of view, see Definition~\ref{def:equivalent} and the remark thereafter. For a more comprehensive introduction to the prime end theory we refer the reader to \cite{Mather}. Let us note that we will not delve deep in this line of research in the current paper, but nevertheless, the extensions of the following results in this direction would, in our opinion, be of interest. Let us also note that the following theorem was in part motivated by the results obtained in \cite{BCL}, although the main reason that we provide it here is to give an insight in the topological properties of some embeddings that we later study in Theorem~\ref{lem:BBM} from the measure-theoretic aspect.

\begin{thm}\label{lem:BBM1}
Let $\mathcal{T}$ be a dense $G_{\delta}$ subset of $C_{\lambda}(I)$ from Theorem~\ref{thm:UniLimPresLeb}.
 There is a parametrized family of interval maps $\{f_t\}_{t\in [0,1]}\subset \mathcal{T}\subset C_{\lambda}(I)$ and a parametrized family of homeomorphisms $\{\Phi_{t}\}_{t\in[0,1]}\subset \mathcal{H}(D, D)$ varying continuously with $t$ having $\Phi_t$-invariant pseudo-arc attractors $\Lambda_t\subset D$ for every $t\in [0,1]$ so that 
	\begin{itemize}
		\item[(a)]\label{item(a)}  $\Phi_t|_{\Lambda_t}$ is topologically conjugate to $\hat{f}_{t}\colon \hat I_f\to \hat I_f$.
		\item[(b)]\label{item(b)} The attractors $\{\Lambda_t\}_{t\in [0,1]}$ vary continuously in the Hausdorff metric. 
		\item[(c)]\label{item(c)} Prime ends rotation numbers of homeomorphisms $\Phi_{t}$ vary continuously with $t$ in the interval $[0,1/2]$.
		\item[(d)] There are uncountably many dynamically non-equivalent planar embeddings of the pseudo-arc in the family $\{\Lambda_t\}_{t\in[0,1]}$.
		\end{itemize}
\end{thm}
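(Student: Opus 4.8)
The plan is to reduce everything to the construction of a single well-chosen arc $t\mapsto f_t$ inside $\mathcal{T}$, and then to let the parametrized Brown--Barge--Martin machinery do the geometric work. Concretely, once we have a path $t\mapsto f_t$ in $C_\lambda(I)$ that is continuous in the uniform metric, the parametrized BBM construction of Boyland--de Carvalho--Hall \cite{3G-BM} produces a uniformly continuous path $t\mapsto \Phi_t\in\mathcal{H}(D,D)$ together with $\Phi_t$-invariant global attractors $\Lambda_t\subset D$ such that $\Phi_t|_{\Lambda_t}$ is conjugate to $\hat f_t$ on $\underleftarrow{\lim}(I,f_t)$ and such that $t\mapsto\Lambda_t$ is Hausdorff-continuous; these are precisely (a) and (b). Since we keep each $f_t\in\mathcal{T}$, Theorem~\ref{thm:UniLimPresLeb} and Corollary~\ref{cor:MincTransue} guarantee that $\underleftarrow{\lim}(I,f_t)$ is the pseudo-arc, so every $\Lambda_t$ is a pseudo-arc. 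Thus the entire theorem collapses to: building a continuous arc in $\mathcal{T}$ whose \emph{end combinatorics} is prescribed, and then reading off the prime ends rotation numbers.

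The core of the argument, and the step I expect to be the main obstacle, is producing that arc. I would first build a \emph{model} arc $t\mapsto g_t$ of Lebesgue measure-preserving maps (or maps in $\overline{C_\lambda(I)}$) that is continuous in $t$ and that agrees, on a fixed neighborhood $U$ of the invariant end structure, with an explicit family whose BBM unwrapping induces rotation by $\rho(t)$ on the relevant arcs of the circle of prime ends, where $\rho\colon[0,1]\to[0,1/2]$ is continuous and onto; here the fact that an interval map can contribute at most a half-turn to the prime ends circle is what confines the range to $[0,1/2]$, and one interpolates between the ``untwisted'' model at $t=0$ (rotation number $0$) and the ``half-twisted'' model at $t=1$ (rotation number $1/2$). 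One then must push this arc into $\mathcal{T}$. Since $\mathcal{T}$ is only residual (not open), there is no uniform room to move in, so I would proceed by an inductive construction over a countable dense set of parameters, followed by a reparametrization/interpolation argument, at each stage crooking $g_t$ \emph{only on the complement of $U$} so that the end combinatorics, and hence $\rho(t)$, is left untouched; the point is that eventual $\delta$-crookedness is a $G_\delta$ condition that can be imposed on a small uniform ball around each $g_t$ while fixing behaviour on $U$ and preserving $\lambda$. Balancing density in $\mathcal{T}$, continuity in $t$, measure preservation, and the preservation of the end data simultaneously is the delicate part.

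For (c), with the arc in hand one identifies the prime ends rotation numbers: the BBM embedding is sufficiently explicit near $\partial D$ that the induced homeomorphism of the prime ends circle is determined by the end combinatorics of $f_t$ on $U$, which yields $\rho_{\mathrm{PE}}(\Phi_t)=\rho(t)$ at least for a dense set of parameters (e.g.\ those $t$ with $\rho(t)$ rational, where the end structure is genuinely periodic and the computation is finite). To upgrade this to all $t$ one uses continuity: the Hausdorff-continuity of $t\mapsto\Lambda_t$ together with uniform continuity of $t\mapsto\Phi_t$ forces the prime ends circle action to vary continuously along the family, so $t\mapsto\rho_{\mathrm{PE}}(\Phi_t)$ is continuous; combined with the dense-set computation (or, alternatively, with upper semicontinuity of $\rho_{\mathrm{PE}}$ and a matching lower bound from the explicit model) this gives $\rho_{\mathrm{PE}}(\Phi_t)=\rho(t)$ for all $t$, a continuous function onto $[0,1/2]$. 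This is a second point requiring care, since prime ends rotation numbers are in general only semicontinuous; the rigidity of these particular attractors is what rescues continuity.

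Finally, (d) follows formally. By Definition~\ref{def:equivalent}, a dynamical equivalence between the embedded attractors $\Lambda_s$ and $\Lambda_t$ is witnessed by an orientation-preserving homeomorphism of the disk conjugating $\Phi_s$ to $\Phi_t$ near the attractors; such a conjugacy descends to a conjugacy of the induced prime ends circle homeomorphisms and therefore preserves the prime ends rotation number. Hence $\Lambda_s$ and $\Lambda_t$ are dynamically non-equivalent whenever $\rho(s)\neq\rho(t)$. Since $\rho([0,1])=[0,1/2]$ is uncountable, the family $\{\Lambda_t\}_{t\in[0,1]}$ contains uncountably many pairwise dynamically non-equivalent planar embeddings of the pseudo-arc, which proves (d) and completes the proof.
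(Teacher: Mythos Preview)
Your overall architecture is right: build a continuous arc $t\mapsto f_t$ in $\mathcal{T}$ with prescribed end behaviour, apply parametrized BBM from \cite{3G-BM} for (a) and (b), and read off (c), (d) from the prime ends rotation number. However, the two places you flag as delicate are handled in the paper by mechanisms quite different from the ones you sketch, and the mechanisms you propose have real problems.

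For the arc, the idea of ``crooking $g_t$ only on the complement of $U$'' does not work: $\delta$-crookedness of $f^n$ is a statement about \emph{all} pairs of preimages in $I$, and the inverse limit is the pseudo-arc only when this holds globally. Leaving a fixed neighbourhood $U$ of the endpoints un-crooked would produce an arc-like subcontinuum in $\underleftarrow{\lim}(I,f_t)$ and destroy the pseudo-arc conclusion. Nor is an induction over a countable dense set of parameters followed by interpolation needed. The paper instead starts from an explicit piecewise-linear family $\{\tilde f_t\}_{t\in[0,1]}\subset C_\lambda(I)$ with $\tilde f_0(0)=0$ and $\tilde f_1(0)=1$, $\tilde f_1(1)=0$, and then composes \emph{every} $\tilde f_t$ with the \emph{same} sequence of perturbations $\lambda_{n_1,k_1},\lambda_{n_2,k_2},\ldots$ from Lemma~\ref{lem:MincUpdt}. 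Because these perturbations are identical for all $t$, continuity in $t$ is automatic; because each $\lambda_{n,k}$ fixes $0$ and $1$, the endpoint behaviour $f_0(0)=0$ and $f_1(\{0,1\})=\{1,0\}$ survives in the limit (Observation~\ref{obs:periodicpoints}). This is the device that simultaneously forces crookedness everywhere and preserves the end data; no local protection of $U$ is needed or possible.

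For (c), you are right that prime ends rotation numbers are only semicontinuous in general, but under the present hypotheses the paper invokes Barge's result (Lemma~\ref{Barge}, i.e.\ \cite[Proposition~2.2]{Barge}): for a continuously varying family of disc homeomorphisms with continuously varying invariant non-separating continua, the prime ends rotation number is genuinely continuous. Consequently one does not compute the rotation number on a dense set of parameters; one computes it only at the two endpoints. Using the standard unwrappings and the preserved endpoint values, the paper exhibits an accessible \emph{fixed} point for $\Phi_0$ (via a $\Phi_0$-invariant radial arc to $0\in I$) forcing rotation number $0$, and an accessible period-$2$ pair for $\Phi_1$ (via radial arcs to $0$ and $1$ that are interchanged) forcing rotation number $1/2$; results of Brechner \cite{Bre} on indecomposable continua pin down the corresponding prime ends. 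Continuity plus the intermediate value theorem then gives all of $[0,1/2]$. Your argument for (d) matches the paper's.
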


This result is interesting also from several other aspects. First, it is the first example in the literature (to our knowledge) of a parametrized family of strange attractors where the attractors are proven to be homeomorphic, yet the boundary dynamics on the attractor is very rich. This result underlines the fact that pseudo-arc is among one-dimensional continua a special object with respect to its flexibility to permit a variety of different dynamical behavior. Furthermore, let us also note that Theorem~\ref{lem:BBM1} answers Question~2 from \cite{BCL} for the case of the pseudo-arc, however these results do not directly apply to the pseudo-circle.
Second, the above result also says there are pathwise-connected components in a generic set of $C_{\lambda}(I)$ and it would be interesting to know if this set itself is pathwise-connected.
Third, let us mention that the homeomorphism group of the pseudo-arc contains no non-degenerate continua \cite{LewisGroups}, so Theorem~\ref{lem:BBM1}
may come as a surprise, since it defines a continuous family of homeomorphisms $\Phi_t|_{\Lambda_t}$ where each $\Lambda_t$ is the pseudo-arc.
However, there is no contradiction with results of \cite{LewisGroups} in statements of Theorem~\ref{lem:BBM1}, since each $\Lambda_t$ is the pseudo-arc $P$ up to a homeomorphism $h_t\colon \Lambda_t\to P$.
But first of all, there is no reason why the family $\{h_t\}_{t\in [0,1]}$ should be continuous.
There is also no an immediate argument why homeomorphisms
$h_t\circ \Phi_t|_{\Lambda_t}\circ h_t^{-1}$ are different. 
Since in this work we are interested only in embeddings from dynamical perspective we leave the following question about the topological nature of embeddings open.

\begin{que}\label{q:1}
Are for every $t\neq t'\in [0,1]$ the attractors $\Lambda_t$ and $\Lambda_{t'}$ (topologically) non-equivalently embedded? 
\end{que}
Recall that two planar embeddings are called {\em (topologically) equivalent} if there is a homeomorphism $h\colon \mathbb{R}^2\to \mathbb{R}^2$ such that $h(\Lambda_t)=\Lambda_{t'}$.
Note that $h$ does not have to intertwine the dynamics, so Theorem~\ref{lem:BBM1} does not provide the answer to the above question.
We leave it as a problem for future research.

The last part of this paper is the study of measure-theoretic properties of BBM embeddings of attractors obtained as inverse limits of generic maps in $C_{\lambda}(I)$, and we take Theorem~\ref{thm:UniLimPresLeb} as a starting point. Therefore, besides the topological input given by Theorem~\ref{thm:UniLimPresLeb} it turns out that this family is particularly nice also from the measurable and statistical perspective. A Borel probability measure on a manifold $M$ is called {\em Oxtoby-Ulam (OU)} or {\em good} if it is non-atomic, positive on open sets, and assigns zero measure to the boundary of manifold $M$ (if it exists) \cite{APBook,OxtobyUlam}. As we mentioned earlier, using the BBM technique \cite{BM}, we can represent any inverse limit of interval map as an attractor of a disc homeomorphism. Repeating a simplified version of approach in \cite{3G-BM} we can easily ensure that any invariant measure becomes a physical measure. However, if  this construction is performed for each map separately we would not be able to ensure comparability of obtained embeddings. 
Such an approach would not be satisfactory since it is natural to require from the embedding technique that ``similar'' maps result in ``similar'' embeddings. 
An important result of this type also ensuring statistical stability of attractors was first obtained for the tent inverse limit family in \cite{3G-BM} (see also \cite{BdCH}) and was an inspiration for the following theorem. However, note also that such a result is by no means given beforehand; taking e.g. logistic family instead of the tent map family one cannot prove statistical stability of homeomorphisms obtained from the BBM construction (see \cite{3G-BM} for more detail).

\begin{thm}\label{lem:BBM}
Let $\mathcal{T}$ be a dense $G_{\delta}$ subset of $C_{\lambda}(I)$ from Theorem~\ref{thm:UniLimPresLeb}.
 There exists a dense $G_{\delta}$ set of maps $A\subset\mathcal{T}\subset C_{\lambda}(I)$ and a parametrized family of homeomorphisms $\{\Phi_f\}_{f\in A}\subset \mathcal{H}(D, D)$ varying continuously with $t$ having $\Phi_f$-invariant pseudo-arc attractors $\Lambda_f\subset D$ for every $f\in A$ so that 
 	\begin{itemize}
		\item[(a)]\label{item:a}  $\Phi_f|_{\Lambda_f}$ is topologically conjugate to $\hat{f}\colon \hat I_f\to \hat I_f$.
		\item[(b)]\label{item:b} The attractors $\{\Lambda_f\}_{f\in A}$ vary continuously in Hausdorff metric. 
		\item[(c)]\label{item:c} The attractor $\Lambda_f$ supports induced weakly mixing measure $\mu_{f}$ invariant for $\Phi_f$ for any $f\in A$. Let $\lambda_f$ be an induced Oxtoby-Ulam (OU) measure on $D$. There exists an open set $U\subset D$ which for each $f$ contains $U_{f}\subset U$ so that $\lambda_f(U_f)=\lambda(U)$ and $U_f$ is in the basin of attraction of $\mu_f$. In particular each $\mu_f$ is a physical measure. 
		\item[(d)]\label{item:d} There exist a dense countable set of maps $\{g_i\}^{\infty}_{i=0}\subset A$ for which $\mu_{g_i}$ is the unique physical measure, i.e. its basin of attraction has the full $\lambda_{g_i}$-measure in $D$.
		\item[(e)]\label{item:e} $\Phi_f|_{\Lambda_f}$ is transitive and has the shadowing property.
		\item[(f)] Measures $\mu_f$ vary continuously in the weak* topology, i.e. family $\{\Phi_f\}_{f\in A}$ is statistically stable.
	\end{itemize}
\end{thm}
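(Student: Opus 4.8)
The plan is to graft the parametrized Brown--Barge--Martin (BBM) embedding of \cite{3G-BM} onto the residual set $\mathcal T$ of Theorem~\ref{thm:UniLimPresLeb}, but carried out uniformly over the whole function space rather than over a single arc. First I would work inside the dense $G_\delta$ set $\mathcal T'\subset\mathcal T$ of Corollary~\ref{cor:Typical}: for $f\in\mathcal T'$ the inverse limit $\hat I_f$ is the pseudo-arc, $\hat f$ preserves the induced measure $\hat m$ (the lift of $\lambda$, in the sense of Definition~\ref{def:induced}), and $\hat f$ is transitive, weakly mixing for $\hat m$, has infinite topological entropy, the shadowing property and a Cantor set of periodic points of every period; moreover $f$ itself may be taken locally eventually onto (\cite{BT}). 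For such $f$ I would run the BBM construction of a disc homeomorphism $\Phi_f\colon D\to D$ with global attractor $\Lambda_f=\bigcap_{n\ge0}\Phi_f^n(D)$, taking care that each ingredient -- a thickening of the graph of $f$ into an embedded disc, the unwrapping, and the final squeezing toward the graph -- is produced by one fixed recipe that is uniformly continuous in $f$ with respect to the uniform metric $\rho$. Standard BBM theory gives a topological conjugacy $\Phi_f|_{\Lambda_f}\cong\hat f$, which is (a); uniform continuity of $f\mapsto\Phi_f$, together with the fact that the construction is arranged so that $\Lambda_f$ moves continuously with $\Phi_f$ (this is precisely the improvement of \cite{3G-BM} over performing separate embeddings), yields (b).

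For the measure-theoretic statements recall, from \cite{KRS} and Definition~\ref{def:induced}, that $\hat m$ pushes forward under the conjugacy of (a) to a $\Phi_f$-invariant measure $\mu_f$ supported on $\Lambda_f$, which is weakly mixing since $(I,f,\lambda)$ is and weak mixing lifts to the natural extension. To make $\mu_f$ physical I would use the simplified variant of \cite{3G-BM} indicated in the introduction: choose the final squeezing so that on a single open set $U\subset D$ containing all the attractors $\Lambda_f$ the map $\Phi_f$ is, up to a strong fibre contraction, semiconjugate onto $\hat I_f$ to $\hat f$, and build the induced Oxtoby--Ulam measure $\lambda_f$ so that $\lambda_f|_U$ projects along fibres to $\mu_f$ and so that the subset $U_f\subset U$ on which the fibre contraction is effective satisfies $\lambda_f(U_f)=\lambda(U)$. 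Since the diameters of the relevant fibres tend to $0$ along orbits, every point of $U_f$ has the same forward Birkhoff averages as its limit point on $\Lambda_f$; ergodicity of $\mu_f$ then makes $\mu_f$-a.e.\ point $\mu_f$-generic, so $\lambda_f$-a.e.\ point of $U_f$ lies in the basin of $\mu_f$ and $\mu_f$ is physical -- this is (c). For (d) I would fix a $\rho$-dense sequence $\{g_i\}_{i\ge0}$ in this class and, for each $g_i$ separately, replace the final squeezing by one that is contracting on all of $D$; then $\lambda_{g_i}$-a.e.\ point of $D$ is attracted to $\Lambda_{g_i}$ through vanishing fibres and hence has averages converging to $\mu_{g_i}$, so the basin of $\mu_{g_i}$ has full $\lambda_{g_i}$-measure. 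The set $A$ is then the dense $G_\delta$ of maps of $\mathcal T'$ for which the recipe is well-defined, shrunk if necessary so as to still contain every $g_i$.

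Part (e) is now automatic: transitivity and the shadowing property are invariant under topological conjugacy, and both hold for $\hat f$ by Corollary~\ref{cor:Typical}, hence for $\Phi_f|_{\Lambda_f}$. For statistical stability (f), suppose $f_n\to f$ in $A$. Uniform convergence $\Phi_{f_n}\to\Phi_f$ and (b) force any weak$^*$ accumulation point $\nu$ of $(\mu_{f_n})$ to be $\Phi_f$-invariant and carried by $\Lambda_f$. Pulling $\nu$ back to $\hat I_f$ via the conjugacy of (a) produces an $\hat f$-invariant measure $\hat\nu$; because the base semiconjugation $p_f\colon\Lambda_f\to I$ (corresponding to $\pi_0$) varies continuously with $f$ -- again from the fixed recipe -- and $(p_{f_n})_*\mu_{f_n}=\lambda$ for every $n$, we get $(p_f)_*\nu=\lambda$, i.e.\ the projection of $\hat\nu$ to the first coordinate is $\lambda$. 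But an $\hat f$-invariant measure on $\hat I_f$ projecting to $\lambda$ under $\pi_0$ is necessarily the natural-extension measure $\hat m$, so $\hat\nu=\hat m$ and $\nu=\mu_f$; uniqueness of the accumulation point gives $\mu_{f_n}\to\mu_f$ in the weak$^*$ topology.

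I expect the main obstacle to be realizing the parametrized BBM embedding as a genuinely continuous assignment $f\mapsto(\Phi_f,\Lambda_f,\lambda_f,p_f)$ over the \emph{infinite-dimensional} parameter space $A\subset C_\lambda(I)$, rather than over the one-parameter family treated in \cite{3G-BM}. Concretely, one must select the thickening of the graph, the unwrapping annulus, the squeezing and the Oxtoby--Ulam measure by formulas depending only on $f$ and on fixed auxiliary data, in such a way that all of them are uniformly continuous in $\rho$, that a single neighbourhood $U$ works for every $f$, and that the squeezing can be locally upgraded to a global contraction for the dense subfamily $\{g_i\}$. Parts (b), (c) and (f) all rest on this uniformity; once it is secured the rest is soft -- the ergodic-theoretic conclusions follow from ergodicity of the induced measures $\mu_f$ and from uniqueness of the natural extension, and the topological conclusions are transported along the conjugacy of (a).
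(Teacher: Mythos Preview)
Your outline correctly identifies the architecture---BBM embedding plus lifted ergodic theory---and your argument for (f) via uniqueness of the natural-extension measure projecting to $\lambda$ is sound and close in spirit to what the paper does. You are also honest that the real work is making the assignment $f\mapsto\Phi_f$ continuous over a non-compact, infinite-dimensional parameter space. But you do not actually solve that problem, and two of your proposed shortcuts would not work.

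First, there is no ``fixed recipe uniformly continuous in $\rho$'' for the unwrapping over all of $C_\lambda(I)$: the space is not equicontinuous, so a map $f$ close to $g$ in sup norm may have a graph (hence a standard unwrapping $\bar f$) that is wildly different as an embedded arc, and Brown's theorem applied fibrewise gives no uniform control on the resulting homeomorphism $\Theta_f$. The paper does \emph{not} attempt a uniformly continuous recipe. Instead it builds $A=\bigcap_n A^n$ as a nested intersection of unions of disjoint open balls $A^n_i=B(f_{j^n_i},\delta^n_i)$ centred on a fixed countable dense set $\{f_i\}\subset Q$, and on each ball prescribes a single homeomorphism $h^n_i$ of $D$. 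For $g\in A$ one gets a sequence $h^n_{j_n}$ converging uniformly; the limit is the unwrapping $F_g$, and the conjugating homeomorphism $\Theta_g$ is obtained by hand from Brown's approximation theorem after first manufacturing, by careful choice of the radii $\delta^n_i$ and auxiliary constants $\xi^n_i$, a common Lebesgue sequence and measure sequence valid for every $g$ in a given ball. Continuity of $f\mapsto\Phi_f$ then comes from the nested structure (nearby maps share long initial strings $h^1_{j_1},\ldots,h^n_{j_n}$), not from any modulus of continuity of a recipe.

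Second, your plan for (d)---``for each $g_i$ separately, replace the final squeezing by one that is contracting on all of $D$''---is incompatible with the continuity you need for (b) and (f): if you alter the construction at the $g_i$'s, then $\Phi_{g_i}$ is no longer the limit of $\Phi_f$ as $f\to g_i$, and the family ceases to vary continuously. The paper avoids this by an Oxtoby--Ulam device you do not mention: it fixes a countable family of tame Cantor sets $\{C_i\}$ filling $\Int D_1\setminus I$ with full Lebesgue measure and lying over the set $S$ of simultaneous generic points of \emph{all} the $f_i$, and at step $n$ inserts a small homeomorphism $H$ that pushes $h^{n-1}(C_n)$ back into $\bigcup_i C_i$. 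This is done uniformly (the same $H$ on each ball $A^n_k$), so continuity survives; yet for the special maps $f_i$ it guarantees that Lebesgue-a.e.\ point of $D$ is eventually carried into $S$, which is exactly what yields the unique physical measure in (d). For a general $g\in A$ the Cantor-set bookkeeping only controls the annulus $D_2\setminus D_1$, giving the weaker statement (c). In short, (c) and (d) come from the same construction, distinguished only by whether the base map happens to lie in the countable dense set; no separate ``upgrade'' is performed.
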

One of the difficulties to obtain the above result is that the space $C_\lambda(I)$ is not compact and so we cannot apply results for BBM parametrized families approach from \cite{3G-BM,BdCH} directly; therefore we have to provide our own construction. 
 We provide a new version of parametrized BBM construction that works for particular complete parameter spaces which helps us to obtain properties (a) and (b) from Theorem~\ref{lem:BBM} and at the same time ensures that $\{\Phi_f\}_{f\in A}\subset \mathcal{H}(D, D)$ vary continuously with $t$. Furthermore, our construction also controls measures of the sets that are attracted to $\Lambda_f$. This is obtained by adjusting Oxtoby-Ulam technique of approximating the space by Cantor sets and move them around in a controlled fashion. This allows us to obtain properties (c) and (d) where, in particular, property (d) requires a very careful control, and as such, one cannot expect it can be extended onto all the maps in the family using our approach. We also show that our family of attractors behaves well from the statistical point of view; namely, the induced measures on these attractors vary continuously in the weak* topology, or in other words our family is statistically stable. 
 
It would be of great interest to obtain similar result as in Theorem~\ref{lem:BBM} using the $\mathcal{C}^1$ topology on the disk. Such a task seems to be very hard and we cannot hope to obtain the result adjusting the BBM approach. 
{The generic interval maps that we deal with are nowhere differentiable. Furthermore, we cannot use $\mathcal{C}^1$ topology instead,}  since that would imply finite entropy (cf. \cite{MP}), while we know that positive entropy interval maps giving the pseudo-arc as inverse limit must have infinite entropy \cite{Mouron}. On the other hand, recent advances show that finite entropy is possible on the pseudo-arc \cite{BCO} and pseudo-arc is the typical continuum in $\R^2$ (see \cite{BingPacific}) which gives a chance for a positive solution. 

 An important motivation for studying statistical stability of attractors originated from much earlier works than \cite{BdCH1}. The concept of statistical persistence of some phenomena was originally defined by Alves and Viana \cite{AV} and it expresses the continuous variation of physical measures as a function of the evolution law governing the systems. A natural testing ground for this concept was a well known parametrized family of H\'enon attractors.
 This line of research culminated in the work of Alves, Carvalho and Freitas \cite{ACF} who proved that H\'enon maps for Benedicks-Carleson parameters \cite{BC} are indeed statistically stable. However, H\'enon attractors are in some sense very fragile. This is supported by the result of Ures \cite{Ures} who showed that the Benedicks–Carleson parameters can be approximated by other parameters for which the H\'enon map has a homoclinic tangency associated to a fixed point. Hence, using the Newhouse’s results \cite{New1,New2}, one can deduce the appearance of infinitely many attractors in the neighborhood of the H\'enon attractors for Benedicks-Carleson parameters. 
Our Theorem~\ref{lem:BBM} ensures statistical stability for 
the considered family, however only for (topologically) small set of maps we obtain unique physical measure. Its uniqueness is hard to reproduce for the whole family, and we cannot exclude the situation, that similarly to H\'enon attractors, several physical measures will appear when arbitrarily small perturbation is applied.

\subsection{Insight into the proof and the outline of the paper}

For preliminary results concerning crookedness we adjust in Section~\ref{sec:crooked} techniques developed by Minc and Transue \cite{MT} and combine them with a special window perturbations that were first introduced in \cite{BT} and subsequently used in \cite{BCOT,BCOT1}. Of central importance in proving Theorem~\ref{thm:UniLimPresLeb} is Lemma~\ref{lem:MincUpdt}, where we show that the Lebesgue measure-preserving perturbations we construct satisfy certain requirements from \cite{MT}. This allows us to apply important techniques developed therein. We provide this construction in Section~\ref{sec:crooked}, prove Theorem~\ref{thm:UniLimPresLeb} and then extend the argument on the closure of the class of interval maps with dense set of periodic points. We use Section~\ref{sec:extension} to harvest the low hanging fruit through the inverse limit construction (in particular we obtain Corollaries~\ref{cor:MincTransue} and \ref{cor:Typical}).

The second major part of this paper is the proof of Theorem~\ref{lem:BBM1} in Section~\ref{sec:arc}. We start with a continuously varying family of piecewise affine Lebesgue measure-preserving interval maps with slope large enough and obtain maps that satisfy Theorem~\ref{thm:UniLimPresLeb}. The proof of Theorem~\ref{lem:BBM1} can be compared with the proof of Theorem~\ref{thm:UniLimPresLeb}, with the main difference being that we provide a sequence of special perturbations that are appropriate for the whole family of interval maps. We need to note at this point that besides the requirement on the lower bound for slopes of these maps there is nothing special about our chosen family of interval maps; we could easily repeat the procedure starting with a different (non-conjugate) piecewise affine family of Lebesgue measure-preserving interval maps. However, we have no tools to prove that the new family obtained after perturbations would be different (i.e. maps would not be topologically conjugate) to the original family we have chosen to study. To make attractors out of this continuously varying family of maps we can directly apply machinery developed in \cite{3G-BM} and combining this with the result of Barge \cite{Barge} we get the required result on the continuity of the prime ends rotation numbers of the attractors.

The last major part of the paper is Section~\ref{sec:BBM2} where we prove Theorem~\ref{lem:BBM}. For that purpose we develop modifications of the BBM embeddings technique which are required since our parameter space is complete but not compact. Therefore, we combine continuously varying BBM technique (unwrappings in the language of \cite{3G-BM}) with direct application of tools from the proof of Brown's theorem \cite{Br} which we extend for our particular class of generic Lebesgue measure-preserving interval maps. To make these embedding measure-theoretically interesting we combine the technique with an adaptation of Oxtoby-Ulam technique of controlled transformations of a dense collection Cantor sets in a topological disk; the rest of the proof shows how to obtain all the items from Theorem~\ref{lem:BBM} which is indeed possible due to the inverse limit construction implemented in the BBM technique. An interesting question that we did not address for the family of attractors constructed in this section is in how many equivalence classes planar embeddings of attractors fall into\footnote{Theorem~\ref{lem:BBM1} sheds a light with respect to this problem, however since $A$ is a proper subset of $\mathcal{T}$ we, a priori, cannot claim anything in this direction.}. While the attractors themselves are topologically the same (all pseudo-arcs are homeomorphic), the homeomorphism between them does not necessarily extend to the disc. The answer strongly depends on construction in the proof of Theorem~\ref{lem:BBM}, especially the properties of map $\Theta_f$ identifying constructed inverse limit with the disc.

\section{Crookedness is generic in the family of Lebesgue measure-preserving interval maps}\label{sec:crooked}
\subsection{General preliminaries}
Let $\mathbb{N}:=\{1,2,3,\ldots\}$ and $\mathbb{N}_0:=\mathbb{N}\cup\{0\}$. Let $I:=[0,1]\subset \mathbb{R}$ denote the unit interval. Let $\mathrm{diam}(A)$ denote the diameter of $A\subset I$.
Let $\lambda$ denote the \emph{Lebesgue measure} on the underlying Euclidean space. By $C(I)$ we denote the family of all continuous interval maps. Furthermore, let $C_{\lambda}(I)\subset C(I)$ denote the family of all continuous Lebesgue measure-preserving functions of $I$. We equip both $C(I)$ and $C_{\lambda}(I)$ with the \emph{metric of uniform convergence} $\rho$:
$$\rho (f,g) := \sup_{x \in I} |f(x) - g(x)|.$$
For a metric space $(X,d)$ we shall use $B(x,\xi)$ for the open ball of radius $\xi$ centered at $x\in X$ and for a set $U\subset X$ we shall denote 
$$B(U,\xi):=\bigcup_{x\in U}B(x,\xi).$$ In the rest of the paper we use the letter $d$ to denote the \emph{Euclidean distance} on the underlying Euclidean space.
We say that a map $f$ is \emph{piecewise linear (or piecewise affine)} if it has finitely many \emph{critical points} (i.e. points $x\in I$ such that $f|_{J}$ is not one-to-one for every open interval $x\in J\subset I$) and is linear on every {\em interval of monotonicity} (an interval $J\subset I$ on which $f$ is monotone, but is not monotone on any interval properly containing $J$). We say that an interval map $f$ is \emph{locally eventually onto (leo)} if for every open interval $J\subset I$ there exists a non-negative integer $n$ so that $f^n(J)=I$. This property is also sometimes referred in the literature as \emph{topological exactness}.

\subsection{Proof of Theorem~\ref{thm:UniLimPresLeb}}
For an easier visualization of the concept defined in the following definition we refer the reader to Figure~\ref{fig:sigma}, where examples of such maps are given.

\begin{defn}\label{def:crooked}
	Let $f\in C(I)$, let $a,b\in I$ and let $\delta>0$. We say that $f$ is {\em $\delta$-crooked between $a$ and $b$} if for every two points $c, d \in I$ such that $f(c) = a$ and $f(d) = b$, there is a point $c'$ between $c$ and $d$ and there is a point $d'$ between $c'$ and $d$ such that $|b - f(c')| < \delta$ and $|a - f(d')| < \delta$. We will say that $f$ is {\em $\delta$-crooked} if it is $\delta$-crooked between every pair of points.
\end{defn}

The following two definitions and the first part of the third definition were introduced in \cite{LM}.
We will use the maps defined below as the building blocks of our perturbations.

\begin{defn}
Let $(\scr [n])^\infty_{n=1}\subset \N$ be the sequence defined in the following way: 
$\scr [1]:=1$, $\scr [2]:=2$ and $\scr [n]:=2\scr [n-1]+\scr [n-2]$ for each $n\geq 3$.
\end{defn}

\begin{defn}
Let $g_1$ and $g_2$ be two maps of $I$ into itself such that $g_1 (0) = g_2(0)=0$ and $g_1(1)=g_2(1)=1$. Suppose $m\geq 3$ is an integer and $s$ is a real number such that $0 < s < 1/2$. Then $\phi [g_1, g_2, s, m]$ is the function of $I$ into itself defined by the formula:
\begin{equation}
\label{funcphi}
\phi [g_1, g_2, s, m](t):=\begin{cases}
\frac{m-1}{m} g_2(\frac{t}{s}),& \text{if } 0\leq t\leq s,\\
\frac{1}{m}+\frac{m-2}{m}g_1(\frac{1-s-t}{1-2s}),& \text{if }s \leq t \leq 1 - s,\\
\frac{1}{m}+\frac{m-1}{m}g_2(\frac{t+s-1}{s}),& \text{if }1 -s\leq t\leq 1.
\end{cases}
\end{equation}
\end{defn}

\begin{defn}
For each integer $n\geq 3$ denote
$$
s_n :=\scr [n-1]/(2\scr [n-1]+\scr [n-2]).
$$
For each $n\in \N$, let \textit{simple $n$-crooked map}
$\sigma_n\colon I\to I$ be defined in the following way (see also Figure~\ref{fig:sigma} and Figure~\ref{fig:hatlambda}): 
\begin{itemize}
\item $\sigma_1 =\sigma_2$ is the identity on $I$, and
\item $\sigma_n :=\phi[\sigma_{n-2},\sigma_{n-1},s_n,n]$ for each positive integer $n\geq 3$.
\end{itemize}
Let $\sigma_{-n}$ denote the reflection of the simple $n$-crooked map, that is 
$\sigma_{-n}(t):= 1 -\sigma_n (t)$ for each $t\in I$. Let $\sigma_{n}^{L}:=\sigma_{n}|_{[0,1/2]}$ where $\sigma_{n}|_{[0,1/2]}:[0,1/2]\to [0,\frac{n-1}{n}]$,  $\sigma_{n}^{R}:=\sigma_{n}|_{[1/2,1]}$ where $\sigma_{n}|_{[1/2,1]}:[1/2,1]\to [\frac{1}{n},1]$.  Similarly as above let $\sigma_{-n}^L$ and $\sigma_{-n}^R$ denote the reflections of $\sigma_{n}^L$ and $\sigma_{n}^R$ respectively. 
\end{defn}

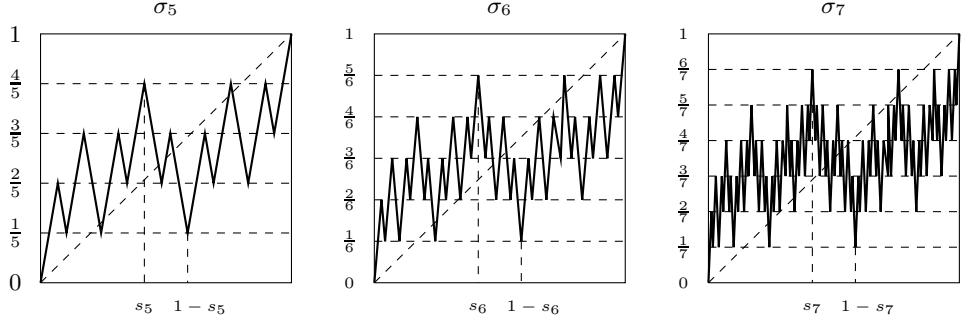
\begin{figure}[!ht]
	\centering
	\begin{tikzpicture}[scale=3.3]
	\draw (0,0)--(0,1)--(1,1)--(1,0)--(0,0);
	\draw[dashed] (0,0)--(1,1);
	\draw[dashed] (0,1/5)--(1,1/5);
	\draw[dashed] (0,2/5)--(1,2/5);
	\draw[dashed] (0,3/5)--(1,3/5);
	\draw[dashed] (0,4/5)--(1,4/5);
	\node at (-0.1,0) {\small $0$};
	\node at (-0.1,1/5) {\small $\frac{1}{5}$};
	\node at (-0.1,2/5) {\small $\frac{2}{5}$};
	\node at (-0.1,3/5) {\small $\frac{3}{5}$};
	\node at (-0.1,4/5) {\small $\frac{4}{5}$};
	\node at (-0.1,1) {\small $1$};
	\draw[thick] (0,0)--(2/29,2/5)--(3/29,1/5)--(5/29,3/5)--(7/29,1/5)--(9/29,3/5)--(10/29,2/5)--(12/29,4/5)--(14/29,2/5)--(15/29,3/5)--(17/29,1/5)--(19/29,3/5)--(20/29,2/5)--(22/29,4/5)--(24/29,2/5)--(26/29,4/5)--(27/29,3/5)--(1,1);
	\draw[dashed] (12/29,4/5)--(12/29,0);
	\draw[dashed] (17/29,1/5)--(17/29,0);
	\node at (12/29,-0.1) {\tiny $s_5$};
	\node at (17/29+0.05,-0.1) {\tiny $1-s_5$};
	\node at (0.5,1.1) {\small $\sigma_5$};
	\end{tikzpicture}
		\hspace{0,3cm}
	\begin{tikzpicture}[scale=3.3]
		\draw[dashed] (0,0)--(1,1);
	\node at (0.5,1.1) {\small $\sigma_6$};
		\node at (-0.1,0) {\tiny $0$};
	\node at (-0.1,1/6) {\tiny $\frac{1}{6}$};
	\node at (-0.1,2/6) {\tiny $\frac{2}{6}$};
	\node at (-0.1,3/6) {\tiny $\frac{3}{6}$};
	\node at (-0.1,4/6) {\tiny $\frac{4}{6}$};
	\node at (-0.1,5/6) {\tiny $\frac{5}{6}$};
	\node at (-0.1,1) {\tiny $1$};
	\node at (29/70,-0.1) {\tiny $s_6$};
	\node at (41/70+0.05,-0.1) {\tiny $1-s_6$};
	\draw (0,0)--(0,1)--(1,1)--(1,0)--(0,0);
	\draw[dashed] (0,1/6)--(1,1/6);
	\draw[dashed] (0,2/6)--(1,2/6);
	\draw[dashed] (0,3/6)--(1,3/6);
	\draw[dashed] (0,4/6)--(1,4/6);
	\draw[dashed] (0,5/6)--(1,5/6);
	\draw[thick] (0,0)--(2/70,2/6)--(3/70,1/6)--(5/70,3/6)--(7/70,1/6)--(9/70,3/6)--(10/70,2/6)--(12/70,4/6)--(14/70,2/6)--(15/70,3/6)--(17/70,1/6)--(19/70,3/6)--(20/70,2/6)--(22/70,4/6)--(24/70,2/6)--(26/70,4/6)--(27/70,3/6)--(29/70,5/6)--(31/70,3/6)--(32/70,4/6)--(34/70,2/6)--(36/70,4/6)--(38/70,2/6)--(39/70,3/6)--(41/70,1/6)--(43/70,3/6)--(44/70,2/6)--(46/70,4/6)--(48/70,2/6)--(50/70,4/6)--(52/70,3/6)--(53/70,5/6)--(55/70,3/6)--(56/70,4/6)--(58/70,2/6)--(60/70,4/6)--(61/70,3/6)--(63/70,5/6)--(65/70,3/6)--(67/70,5/6)--(68/70,4/6)--(1,1);
	\draw[dashed] (29/70,5/6)--(29/70,0);
	\draw[dashed] (41/70,1/6)--(41/70,0);
	\end{tikzpicture}
	\hspace{0,3cm}
	\begin{tikzpicture}[scale=3.3]
	\draw[dashed] (0,0)--(1,1);
	\node at (0.5,1.1) {\small $\sigma_7$};
	\node at (-0.1,0) {\tiny $0$};
	\node at (-0.1,1/7) {\tiny $\frac{1}{7}$};
	\node at (-0.1,2/7) {\tiny $\frac{2}{7}$};
	\node at (-0.1,3/7) {\tiny $\frac{3}{7}$};
	\node at (-0.1,4/7) {\tiny $\frac{4}{7}$};
	\node at (-0.1,5/7) {\tiny $\frac{5}{7}$};
	\node at (-0.1,6/7) {\tiny $\frac{6}{7}$};
	\node at (-0.1,1) {\tiny $1$};
	\node at (70/169,-0.1) {\tiny $s_7$};
	\node at (99/169+0.05,-0.1) {\tiny $1-s_7$};
	\draw (0,0)--(0,1)--(1,1)--(1,0)--(0,0);
	\draw[dashed] (0,1/7)--(1,1/7);
	\draw[dashed] (0,2/7)--(1,2/7);
	\draw[dashed] (0,3/7)--(1,3/7);
	\draw[dashed] (0,4/7)--(1,4/7);
	\draw[dashed] (0,5/7)--(1,5/7);
	\draw[dashed] (0,6/7)--(1,6/7);
	\draw[thick] (0,0)--(2/169,2/7)--(3/169,1/7)--(5/169,3/7)--(7/169,1/7)--(9/169,3/7)--(10/169,2/7)--(12/169,4/7)--(14/169,2/7)--(15/169,3/7)--(17/169,1/7)--(19/169,3/7)--(20/169,2/7)--(22/169,4/7)--(24/169,2/7)--(26/169,4/7)--(27/169,3/7)--(29/169,5/7)--(31/169,3/7)--(32/169,4/7)--(34/169,2/7)--(36/169,4/7)--(38/169,2/7)--(39/169,3/7)--(41/169,1/7)--(43/169,3/7)--(44/169,2/7)--(46/169,4/7)--(48/169,2/7)--(50/169,4/7)--(52/169,3/7)--(53/169,5/7)--(55/169,3/7)--(56/169,4/7)--(58/169,2/7)--(60/169,4/7)--(61/169,3/7)--(63/169,5/7)--(65/169,3/7)--(67/169,5/7)--(68/169,4/7)--(70/169,6/7)--(72/169,4/7)--(73/169,5/7)--(75/169,3/7)--(77/169,5/7)--(79/169,3/7)--(80/169,4/7)--(82/169,2/7)--(84/169,4/7)--(85/169,3/7)--(87/169,5/7)--(89/169,3/7)--(90/169,4/7)--(92/169,2/7)--(94/169,4/7)--(96/169,2/7)--(97/169,3/7)--(99/169,1/7)--(101/169,3/7)--(102/169,2/7)--(104/169,4/7)--(106/169,2/7)--(108/169,4/7)--(109/169,3/7)--(111/169,5/7)--(113/169,3/7)--(114/169,4/7)--(116/169,2/7)--(118/169,4/7)--(121/169,3/7)--(121/169,5/7)--(123/169,3/7)--(125/169,5/7)--(126/169,4/7)--(128/169,6/7)--(130/169,4/7)--(131/169,5/7)--(133/169,3/7)--(135/169,5/7)--(137/169,3/7)--(138/169,4/7)--(140/169,2/7)--(142/169,4/7)--(143/169,3/7)--(145/169,5/7)--(147/169,3/7)--(149/169,5/7)--(151/169,4/7)--(152/169,6/7)--(154/169,4/7)--(155/169,5/7)--(157/169,3/7)--(159/169,5/7)--(160/169,4/7)--(162/169,6/7)--(164/169,4/7)--(166/169,6/7)--(167/169,5/7)--(1,1);
	\draw[dashed] (70/169,5/7)--(70/169,0);
	\draw[dashed] (99/169,1/7)--(99/169,0);
	\end{tikzpicture}
	\caption{Simple $n$-crooked maps $\sigma_n$ for $n=5,6$ and $7$. Note that $\sigma_5,\sigma_6$ and $\sigma_7$ are $3/5,3/6$ and $3/7$-crooked respectively.}\label{fig:sigma}
\end{figure} 

\begin{defn}
	For every integer $n\geq 3$ the maximal number of intervals of monotonicity of $\sigma_n$ of the same length is denoted by $\#_{\sigma_n}$.
\end{defn}

\begin{defn}
	We say that a function $f:[a,b]\to [a',b']$ where $[a,b],[a',b']\subset \mathbb{R}$ and $a<b, a'<b'$ is an \emph{odd function around the point $( \frac{a+b}{2},\frac{a'+b'}{2})\in \mathbb{R}^2$} if the graph of $f|_{\big[\frac{a+b}{2},b\big]}$ equals the rotation for angle $\pi$ of the graph of $f|_{\big[a,\frac{a+b}{2}\big]}$ around $(\frac{a+b}{2},\frac{a'+b'}{2})$.
\end{defn}

\begin{obs}\label{obs:sigma_n}
 	For every integer $n\geq 3$ 
 	\begin{enumerate}
 	\item $\sigma_n$ is a piecewise linear continuous function. 
 	\item it holds that $\#_{\sigma_n}=\scr[n]$. In particular, for even $n$, $\#_{\sigma_n}$ is even and it is odd for odd $n$.
 	\item  $\sigma_n$ has uniform slope being $\pm \frac{\scr[n]}{n}$.
 	\item \label{obs:sigma_n(4)} $\sigma_{n}$ is an odd function around the point $(1/2,1/2)${, i.e. it holds that $\sigma_{-n}^L(t)=\sigma_{n}^R(t+1/2)$ and $\sigma_{-n}^R(t+1/2)=\sigma_{n}^L(t)$ for all $t\in[0,1/2]$.}
	 	\end{enumerate}
\end{obs}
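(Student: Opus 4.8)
The plan is to establish all four items simultaneously by induction on $n$, running on the recursive definition $\sigma_n=\phi[\sigma_{n-2},\sigma_{n-1},s_n,n]$ for $n\ge 3$, the base maps $\sigma_1=\sigma_2=\id$, and the two arithmetic identities $s_n=\scr[n-1]/\scr[n]$ and $1-2s_n=\scr[n-2]/\scr[n]$ --- the latter being just a rephrasing of the recursion $\scr[n]=2\scr[n-1]+\scr[n-2]$. These identities are the engine of the argument: they are exactly what forces the horizontal rescalings occurring inside $\phi$ to match the Pell-type growth of $\scr[n]$. First I would record the structural facts about $\phi[g_1,g_2,s,m]$ that are used throughout: it sends $0\mapsto 0$ and $1\mapsto 1$; it is continuous, since at the seams $t=s$ and $t=1-s$ the one-sided values agree (they equal $\tfrac{m-1}{m}$ and $\tfrac 1m$ respectively) precisely because $g_i(0)=0$, $g_i(1)=1$; and each of its three branches is an affine reparametrisation of $g_1$ or $g_2$ followed by an affine map. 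Since $\sigma_1=\sigma_2=\id$ is piecewise linear and continuous, item~(1) follows by induction.

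For item~(3) I would differentiate the three branches under the inductive hypothesis that $\sigma_{n-1}$ and $\sigma_{n-2}$ have constant absolute slope $\scr[n-1]/(n-1)$ and $\scr[n-2]/(n-2)$ (valid at $n=3$, where $\sigma_1=\sigma_2=\id$ and $\scr[1]/1=\scr[2]/2=1$). On $[0,s_n]$ and on $[1-s_n,1]$ the absolute slope of $\sigma_n$ is $\tfrac{n-1}{n}\cdot\tfrac{1}{s_n}\cdot\tfrac{\scr[n-1]}{n-1}=\tfrac{\scr[n-1]}{n\,s_n}=\tfrac{\scr[n]}{n}$, while on $[s_n,1-s_n]$ it is $\tfrac{n-2}{n}\cdot\tfrac{1}{1-2s_n}\cdot\tfrac{\scr[n-2]}{n-2}=\tfrac{\scr[n-2]}{n(1-2s_n)}=\tfrac{\scr[n]}{n}$, by the two identities above; since consecutive laps necessarily alternate in sign, this gives item~(3). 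For item~(4) I would prove the algebraic identity $\sigma_n(1-t)=1-\sigma_n(t)$, which is exactly the asserted $\pi$-rotation symmetry about $(1/2,1/2)$: assuming $g_i(1-u)=1-g_i(u)$ for $g_1=\sigma_{n-2}$ and $g_2=\sigma_{n-1}$, a direct substitution shows that $(x,y)\mapsto(1-x,1-y)$ interchanges the first and third branches of $\phi$ and fixes the middle one --- e.g. for $t\in[0,s_n]$ one gets $\phi(1-t)=\tfrac 1n+\tfrac{n-1}{n}g_2(1-\tfrac{t}{s_n})=1-\tfrac{n-1}{n}g_2(\tfrac{t}{s_n})=1-\phi(t)$, and on the middle branch one uses $\tfrac{t-s_n}{1-2s_n}=1-\tfrac{1-s_n-t}{1-2s_n}$. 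The base case $\sigma_1=\sigma_2=\id$ is trivial, and restating $\sigma_n(1-t)=1-\sigma_n(t)$ through the restrictions $\sigma_n^L,\sigma_n^R$ and their reflections yields the identities displayed in~(4).

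Item~(2) requires the most care. Geometrically, $\sigma_n$ is the concatenation of an affinely rescaled copy of $\sigma_{n-1}$ on $[0,s_n]$, an affinely rescaled copy of $\sigma_{n-2}$ traversed backwards on $[s_n,1-s_n]$, and an affinely rescaled copy of $\sigma_{n-1}$ on $[1-s_n,1]$, with horizontal scaling factors $\scr[n-1]/\scr[n]$, $\scr[n-2]/\scr[n]$, $\scr[n-1]/\scr[n]$. Hence the uniform grid $\{j/\scr[n]\}_{j=0}^{\scr[n]}$ is carried cell by cell onto the corresponding grids of the three building blocks, so $\sigma_n$ is affine (in particular monotone) on each of its $\scr[n]$ cells; this yields $\#_{\sigma_n}=2\#_{\sigma_{n-1}}+\#_{\sigma_{n-2}}$, which together with $\#_{\sigma_1}=\scr[1]=1$ and $\#_{\sigma_2}=\scr[2]=2$ (equivalently, checking $n=3,4$ directly) is the defining recursion of $\scr[n]$, whence $\#_{\sigma_n}=\scr[n]$. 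The parity statement is then immediate from $\scr[n]\equiv\scr[n-2]\pmod 2$ together with $\scr[1]$ odd and $\scr[2]$ even. The delicate bookkeeping happens at the two seams $t=s_n$, $t=1-s_n$: to count maximal laps rather than grid cells one must know that the first and last laps of $\sigma_{n-1}$ and $\sigma_{n-2}$ are increasing --- because $0$ and $1$ are the global extrema of every $\sigma_k$ --- so that the slope genuinely changes sign at each seam and no two laps merge; this orientation fact is itself part of the induction. Apart from getting this combinatorial bookkeeping exactly right, everything reduces to routine manipulation of the formula for $\phi$ and the recursion for $\scr[n]$, so I do not anticipate any deeper obstacle.
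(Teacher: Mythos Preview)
The paper states this result as an Observation and offers no proof, so there is nothing to compare your argument against; you are supplying strictly more than the paper does. Your inductive treatment of items (1), (3) and (4) is correct and clean: the continuity check at the seams, the slope computation via $s_n=\scr[n-1]/\scr[n]$ and $1-2s_n=\scr[n-2]/\scr[n]$, and the verification of $\sigma_n(1-t)=1-\sigma_n(t)$ all go through exactly as you describe.

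For item (2), however, you should be careful about what $\#_{\sigma_n}$ actually counts. It is \emph{not} the number of maximal intervals of monotonicity (laps): for instance $\sigma_3$ has only $3$ laps (it is an $N$-shaped map), $\sigma_4$ has $7$, $\sigma_5$ has $17$, following the recursion $L_n=2L_{n-1}+L_{n-2}$ with $L_1=L_2=1$, whereas $\scr[3]=5$, $\scr[4]=12$, $\scr[5]=29$. In particular your base case ``$\#_{\sigma_2}=2$'' is false under the lap interpretation, since $\sigma_2=\id$ has a single lap. The quantity $\#_{\sigma_n}$ is meant to count intervals of the uniform grid $\{j/\scr[n]\}$ --- equivalently, monotone pieces whose image has diameter exactly $1/n$ --- and under that reading your grid-cell argument (``$\sigma_n$ is affine on each of its $\scr[n]$ cells'') is exactly what is needed and is correct. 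The paragraph you devote to checking that slopes genuinely change sign at the seams $t=s_n$ and $t=1-s_n$ is true, but it is aimed at the lap count and hence irrelevant here; you can simply drop it.
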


We will often use the following remark with $\eps=3/n$, however let us note that this estimate is far from optimal for $n$ small.

\begin{rem}\label{rem:crooked}
	By Proposition~3.5 in \cite{LM}, if $\eps>0$ and $n$ is sufficiently large to ensure $2/n<\eps$, the map $\sigma_n$ is $\eps$-crooked.
\end{rem}

\begin{rem}
If a map is $\eps$-crooked with small $\eps>0$ it cannot be a small perturbation of the identity map. To work with small perturbations of identity (which is necessity of Lemma~\ref{lem:MincUpdt})
we must give up crookedness over large subintervals (e.g. see Lemma~\ref{lem:MincUpdt}\eqref{Lcr:2}).
\end{rem}

In what follows we aim to define the maps $\lambda_{n,k}$ that we will work with throughout the section. It will be sufficient for our purposes to define this map for eventually every odd $n$; for even $n$ we could still construct a map $\lambda_{n,k}$ having all the important properties below but this requires a somewhat different construction and we therefore omit this part. Furthermore, it will be evident why we require $n\geq 7$ in the construction of $\lambda_{n,k}$ when we apply the results from \cite{MT}.

In what follows let us denote by 
\begin{equation}\label{eq:eta}
\eta:=\frac{\scr[n-1]}{2(\scr[n]+\scr[n-1])}.
\end{equation}

For each odd integer $n\geq 7$ and each integer $k\geq 1$ define the map
$$
\hat{\lambda}_{n,k} \colon [0,n+k-1]\to \Big[0,\frac{2n+k-2}{n}\Big]
$$
by the formula
\begin{equation}
\label{eq:lambdahat}
\hat{\lambda}_{n,k}(t):=\begin{cases}
\frac{n-1}{n}\sigma^R_{-(n-1)}(\frac{t-i}{2\eta}+\frac{1}{2})+\frac{i}{n},& \text{if } t\in[i,i+\eta],\\
\sigma_n((t-i-\eta)(\frac{1}{1-2\eta}))+\frac{i}{n},& \text{if } t\in[i+\eta,i+1-\eta],\\
\frac{n-1}{n}\sigma_{-(n-1)}^L(\frac{t-i-1}{2\eta}+\frac{1}{2})+\frac{i+1}{n},& \text{if } t\in[i+1-\eta,i+1].\\
\end{cases}
\end{equation}
 for some $i=\{0,1,\ldots, n+k-2\}$. See Figure~\ref{fig:hatlambda} for the graph of $\hat{\lambda}_{n,k}|_{[i,i+1]}$ when $n=7$.
 
 \begin{figure}[!ht]
 	\centering
 	\begin{tikzpicture}[scale=10]
 \draw[dashed] (0,0)--(1,1);
  \draw[dotted,orange,thick] (0,3/7)--(1,4/7);
 \node at (-0.1,0) {\small $0$};
 \node at (-0.1,1/7) {\small$1/7$};
 \node at (-0.1,2/7) {\small $2/7$};
 \node at (-0.1,3/7) {\small $3/7$};
 \node at (-0.1,4/7) {\small $4/7$};
 \node at (-0.1,5/7) {\small $5/7$};
 \node at (-0.1,6/7) {\small $6/7$};
 \node at (-0.1,1) {\small $1$};
 \draw[thick,blue] (0,0)--(0,1)--(1,1)--(1,0)--(0,0);
 \draw[dashed] (0,1/7)--(1,1/7);
 \draw[dashed] (0,2/7)--(1,2/7);
 \draw[dashed] (0,3/7)--(1,3/7);
 \draw[dashed] (0,4/7)--(1,4/7);
 \draw[dashed] (0,5/7)--(1,5/7);
 \draw[dashed] (0,6/7)--(1,6/7);
 \draw[thick](0,3/7)--(1/239,2/7)--(3/239,4/7)--(4/239,3/7)--(6/239,5/7)--(8/239,3/7)--(9/239,4/7)--(11/239,2/7)--(13/239,4/7)--(15/239,2/7)--(16/239,3/7)--(18/239,1/7)--(20/239,3/7)--(21/239,2/7)--(23/239,4/7)--(25/239,2/7)--(26/239,3/7)--(28/239,1/7)--(30/239,3/7)--(32/239,1/7)--(33/239,2/7)--(35/239,0)--(37/239,2/7)--(38/239,1/7)--(40/239,3/7)--(42/239,1/7)--(44/239,3/7)--(45/239,2/7)--(47/239,4/7)--(49/239,2/7)--(50/239,3/7)--(52/239,1/7)--(54/239,3/7)--(55/239,2/7)--(57/239,4/7)--(59/239,2/7)--(61/239,4/7)--(62/239,3/7)--(64/239,5/7)--(66/239,3/7)--(67/239,4/7)--(69/239,2/7)--(71/239,4/7)--(72/239,2/7)--(74/239,3/7)--(76/239,1/7)--(78/239,3/7)--(79/239,2/7)--(81/239,4/7)--(83/239,2/7)--(85/239,4/7)--(87/239,3/7)--(88/239,5/7)--(90/239,3/7)--(91/239,4/7)--(92/239,2/7)--(95/239,4/7)--(96/239,3/7)--(98/239,5/7)--(100/239,3/7)--(102/239,5/7)--(103/239,4/7)--(105/239,6/7)--(107/239,4/7)--(108/239,5/7)--(110/239,3/7)--(112/239,5/7)--(114/239,3/7)--(115/239,4/7)--(117/239,2/7)--(119/239,4/7)--(120/239,3/7)--(122/239,5/7)--(124/239,3/7)--(125/239,4/7)--(127/239,2/7)--(129/239,4/7)--(131/239,2/7)--(132/239,3/7)--(134/239,1/7)--(136/239,3/7)--(137/239,2/7)--(139/239,4/7)--(141/239,2/7)--(143/239,4/7)--(144/239,3/7)--(146/239,5/7)--(148/239,3/7)--(149/239,4/7)--(151/239,2/7)--(153/239,4/7)--(156/239,3/7)--(156/239,5/7)--(158/239,3/7)--(160/239,5/7)--(161/239,4/7)--(163/239,6/7)--(165/239,4/7)--(166/239,5/7)--(168/239,3/7)--(170/239,5/7)--(172/239,3/7)--(173/239,4/7)--(175/239,2/7)--(177/239,4/7)--(178/239,3/7)--(180/239,5/7)--(182/239,3/7)--(184/239,5/7)--(186/239,4/7)--(187/239,6/7)--(189/239,4/7)--(190/239,5/7)--(192/239,3/7)--(194/239,5/7)--(195/239,4/7)--(197/239,6/7)--(199/239,4/7)--(201/239,6/7)--(202/239,5/7)--(204/239,1)--(206/239,5/7)--(207/239,6/7)--(209/239,4/7)--(211/239,6/7)--(213/239,4/7)--(214/239,5/7)--(216/239,3/7)--(218/239,5/7)--(219/239,4/7)--(221/239,6/7)--(223/239,4/7)--(224/239,5/7)--(226/239,3/7)--(228/239,5/7)--(230/239,3/7)--(231/239,4/7)--(233/239,2/7)--(235/239,4/7)--(236/239,3/7)--(238/239,5/7)--(1,4/7);
 \draw[dashed] (35/239,0)--(35/239,1);
 \draw[dashed] (204/239,1)--(204/239,0);
 \node at (1/2,1.05) {$\sigma_7$};
 \node at (0.08,1.05) {$\sigma^R_{-6}$};
 \node at (0.92,1.05) {$\sigma^L_{-6}$};
  \node at (0,-0.05) {$i$};
 \node at (0.15,-0.05) {$i+\eta$};
 \node at (0.85,-0.05) {$i+1-\eta$};
  \node at (1,-0.05) {$i+1$};
 \node[circle,fill,orange, inner sep=1.5] at (0,3/7){};
 \node[circle,fill,orange, inner sep=1.5] at (1,4/7){};
 \node[circle,fill, inner sep=1.5] at (35/239,0){};
 \node[circle,fill, inner sep=1.5] at (204/239,1){};
  \node[circle,fill, inner sep=1.5] at (204/239,0){};
 \node[circle,fill, inner sep=1.5] at (0.5,0.5){};
  \node at (1.05,1/14) {$2$};
   \node at (1.05,3/14) {$18$};
    \node at (1.05,5/14) {$58$};
     \node at (1.05,7/14) {$83$};
      \node at (1.05,9/14) {$58$};
       \node at (1.05,11/14) {$18$};
        \node at (1.05,13/14) {$2$};
 	\end{tikzpicture}
 	\caption{Building blocks of the function $\hat{\lambda}_{n,k}$ for $n=7$. The numbers on the right side of the picture represent the number of intervals of monotonicity of diameter $1/n$ in the respective horizontal strip. Counting the numbers of such intervals will be important later when we will argue that maps $\lambda_{n,k}$ preserve Lebesgue measure. The dotted line represents the diagonal for the map $\lambda_{n,k}$.}\label{fig:hatlambda}
 \end{figure}
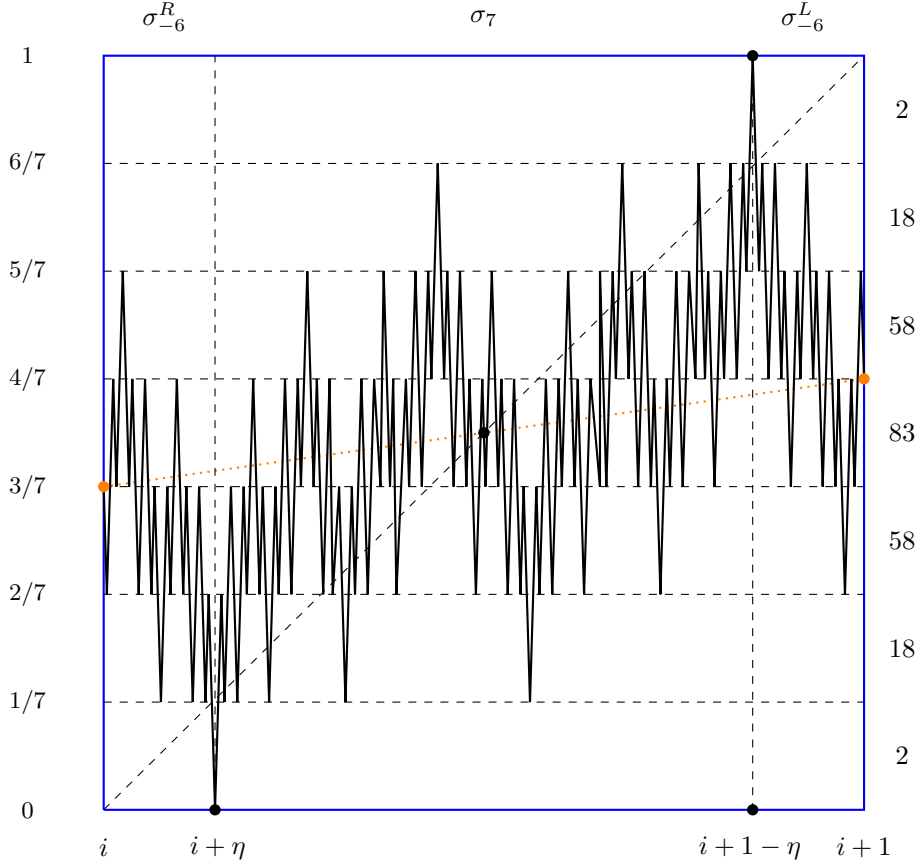 
 
 \begin{obs}\label{obs:lambdahat}
 For each odd integer $n\geq 7$ and each $k\in \mathbb{N}$ the map $\hat{\lambda}_{n,k}$ is 
 \begin{enumerate}
  \item a continuous and piecewise linear function with the uniform slope $\frac{\scr[n]+\scr[n-1]}{n}$.
  \item \label{obs:lambdahat2} an odd function around the point $\big(\frac{n+k-1}{2},\frac{2n+k-2}{2n}\big)$.
  \end{enumerate}
 \end{obs}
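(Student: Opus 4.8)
The plan is to verify the two items by direct bookkeeping on the three affine building blocks out of which $\hat\lambda_{n,k}$ is assembled in \eqref{eq:lambdahat}. There is nothing deep here; the work is mechanical, and the only subtle points are getting the three slopes to agree and getting the constant vertical offsets to cancel correctly.

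First, for item (1): on each interval $[i,i+1]$ each of the three branches of $\hat\lambda_{n,k}$ is the composition of one of the maps $\sigma^R_{-(n-1)}$, $\sigma_n$, $\sigma^L_{-(n-1)}$ with an affine change of variable in the domain, followed by a rescaling and vertical translation in the range; since these three maps are piecewise linear by Observation~\ref{obs:sigma_n}(1), so is each branch. The point that requires attention is that all branches have the \emph{same} absolute slope. By Observation~\ref{obs:sigma_n}(3) the slopes of $\sigma_n$ and of $\sigma_{\pm(n-1)}$ are $\pm\scr[n]/n$ and $\pm\scr[n-1]/(n-1)$; taking into account the horizontal compression factor $1/(1-2\eta)$ on the middle branch and $1/(2\eta)$ on the two outer branches, together with the vertical factor $(n-1)/n$ on the outer branches, the three absolute slopes come out to be
\[
\frac{\scr[n]}{n(1-2\eta)}\qquad\text{and}\qquad \frac{n-1}{n}\cdot\frac{\scr[n-1]}{n-1}\cdot\frac{1}{2\eta}=\frac{\scr[n-1]}{2n\eta}.
\]
Since $\eta=\tfrac{\scr[n-1]}{2(\scr[n]+\scr[n-1])}$ by \eqref{eq:eta} we have $1-2\eta=\tfrac{\scr[n]}{\scr[n]+\scr[n-1]}$ and $2n\eta=\tfrac{n\,\scr[n-1]}{\scr[n]+\scr[n-1]}$, so both quantities equal $\tfrac{\scr[n]+\scr[n-1]}{n}$ — this is precisely the reason $\eta$ is defined by \eqref{eq:eta}. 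It then remains only to check continuity at the breakpoints: at $t=i+\eta$ the first and second branches both take the value $\tfrac{i}{n}$ (because $\sigma^R_{-(n-1)}(1)=1-\sigma_{n-1}(1)=0$ and $\sigma_n(0)=0$); at $t=i+1-\eta$ the second and third branches both take the value $\tfrac{i+n}{n}$ (because $\sigma_n(1)=1$ and $\sigma^L_{-(n-1)}(0)=1-\sigma_{n-1}(0)=1$); and at the integer point $t=i+1$ the third branch of the $i$-th block and the first branch of the $(i+1)$-st block both reduce to $\tfrac{n-1}{n}\sigma_{-(n-1)}(\tfrac12)+\tfrac{i+1}{n}$, using continuity of $\sigma_{n-1}$ at $\tfrac12$ (Observation~\ref{obs:sigma_n}(1)). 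Hence $\hat\lambda_{n,k}$ is continuous and piecewise linear with uniform slope $\tfrac{\scr[n]+\scr[n-1]}{n}$.

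For item (2), rotation by angle $\pi$ about $P=\big(\tfrac{n+k-1}{2},\tfrac{2n+k-2}{2n}\big)$ sends $(x,y)$ to $\big(n+k-1-x,\tfrac{2n+k-2}{n}-y\big)$, so the asserted odd symmetry is equivalent to the functional equation $\hat\lambda_{n,k}(t)+\hat\lambda_{n,k}(n+k-1-t)=\tfrac{2n+k-2}{n}$ for all $t\in[0,n+k-1]$. Writing $t=i+\tau$ with $i\in\{0,\dots,n+k-2\}$ and $\tau\in[0,1]$, one has $n+k-1-t=j+(1-\tau)$ with $j:=n+k-2-i\in\{0,\dots,n+k-2\}$; I would then introduce the $i$-independent profile $\psi\colon[0,1]\to[0,1]$, $\psi(\tau):=\hat\lambda_{n,k}(i+\tau)-\tfrac{i}{n}$ (one reads off from \eqref{eq:lambdahat} that this is independent of $i$), and use $\tfrac{i}{n}+\tfrac{j}{n}=\tfrac{n+k-2}{n}$ together with $\tfrac{n+k-2}{n}+1=\tfrac{2n+k-2}{n}$ to reduce the functional equation to $\psi(\tau)+\psi(1-\tau)=1$ for all $\tau\in[0,1]$. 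This last identity is then checked branchwise. For $\tau\in[\eta,1-\eta]$ (where also $1-\tau\in[\eta,1-\eta]$) it reads $\sigma_n\big(\tfrac{\tau-\eta}{1-2\eta}\big)+\sigma_n\big(\tfrac{(1-\tau)-\eta}{1-2\eta}\big)=1$, which holds because the affine map $\tau\mapsto\tfrac{\tau-\eta}{1-2\eta}$ conjugates the involution $\tau\mapsto 1-\tau$ of $[\eta,1-\eta]$ to the involution $x\mapsto 1-x$ of $[0,1]$ and $\sigma_n$ is odd about $(1/2,1/2)$, i.e. $\sigma_n(x)+\sigma_n(1-x)=1$ (Observation~\ref{obs:sigma_n}(4)). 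For $\tau\in[0,\eta]$ paired with $1-\tau\in[1-\eta,1]$, writing $s:=\tfrac{\tau}{2\eta}\in[0,1/2]$ one gets
\[
\psi(\tau)+\psi(1-\tau)=\frac{n-1}{n}\Big(\sigma^R_{-(n-1)}\big(\tfrac12+s\big)+\sigma^L_{-(n-1)}\big(\tfrac12-s\big)\Big)+\frac1n,
\]
and since $\sigma^R_{-(n-1)}=1-\sigma_{n-1}$ on $[1/2,1]$, $\sigma^L_{-(n-1)}=1-\sigma_{n-1}$ on $[0,1/2]$, and $\sigma_{n-1}$ is odd about $(1/2,1/2)$, the bracket equals $2-\big(\sigma_{n-1}(\tfrac12+s)+\sigma_{n-1}(\tfrac12-s)\big)=2-1=1$, so the right-hand side equals $\tfrac{n-1}{n}+\tfrac1n=1$. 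This establishes the functional equation and hence item (2).

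The main obstacle is not conceptual but bookkeeping: the two delicate spots are (i) tracking the affine reparametrizations carefully enough that the three branches really do emerge with the common slope $\tfrac{\scr[n]+\scr[n-1]}{n}$ — which is exactly where the specific value of $\eta$ in \eqref{eq:eta}, dictated by the recursion $\scr[n]=2\scr[n-1]+\scr[n-2]$, is used — and (ii) correctly identifying the partner block index $j=n+k-2-i$ in item (2) and seeing the constant offsets $\tfrac{i}{n},\tfrac{j}{n}$ telescope to produce precisely $\tfrac{2n+k-2}{n}$. I expect the slope computation in (i) to be the part most prone to arithmetic slips.
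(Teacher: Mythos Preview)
Your proof is correct. The paper states this as an Observation without proof, and your direct verification via slope bookkeeping and the functional equation $\psi(\tau)+\psi(1-\tau)=1$ is exactly the natural way to check it; the only minor inaccuracy is the aside that $\eta$ is ``dictated by the recursion $\scr[n]=2\scr[n-1]+\scr[n-2]$'' --- in fact the slope-matching only uses the formula \eqref{eq:eta} for $\eta$, not the recursion itself.
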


\begin{lem}\label{lem:hatcrooked}
	For every odd integer $n\geq 7$ and every integer $k\geq 1$ it holds that if $t,s\in [0,n+k-1]$ are such that $|\hat{\lambda}_{n,k}(t)-\hat{\lambda}_{n,k}(s)|<\frac{n-1}{n}$ then $\hat{\lambda}_{n,k}$ is $\frac{3}{n}$-crooked between $\hat{\lambda}_{n,k}(t)$ and $\hat{\lambda}_{n,k}(s)$. 
\end{lem}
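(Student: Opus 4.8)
The plan is to reduce the crookedness of $\hat\lambda_{n,k}$ to the already-known crookedness of the simple crooked map $\sigma_n$ (via Remark~\ref{rem:crooked}), by exploiting the explicit block structure of $\hat\lambda_{n,k}$ given in \eqref{eq:lambdahat}. First I would fix $t,s\in[0,n+k-1]$ with $|\hat\lambda_{n,k}(t)-\hat\lambda_{n,k}(s)|<\tfrac{n-1}{n}$ and set $a:=\hat\lambda_{n,k}(t)$, $b:=\hat\lambda_{n,k}(s)$. By Definition~\ref{def:crooked} I must take arbitrary $c,d$ with $\hat\lambda_{n,k}(c)=a$, $\hat\lambda_{n,k}(d)=b$ and produce the required $c'$ between $c$ and $d$ and $d'$ between $c'$ and $d$ with $|b-\hat\lambda_{n,k}(c')|<\tfrac3n$ and $|a-\hat\lambda_{n,k}(d')|<\tfrac3n$. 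The key observation is that on each subinterval $[i+\eta,i+1-\eta]$ the map $\hat\lambda_{n,k}$ is an affine rescaling in the domain of $\sigma_n$ shifted up by $i/n$; since the range of this middle block is $[i/n,(i+1)/n+\tfrac{n-1}{n}\cdot\text{(something)}]$ — more precisely it covers the full strip of height $\tfrac{n-1}{n}$ sitting at level $i/n$ — the condition $|a-b|<\tfrac{n-1}{n}$ will force $c$ and $d$ (or at least preimages we can choose) to lie in the same strip, i.e.\ associated to the same index $i$.

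The core step is therefore the following case analysis on where $c$ and $d$ fall. If both $c,d$ lie in a common middle block $[i+\eta,i+1-\eta]$, then after the affine change of coordinates $u\mapsto (u-i-\eta)/(1-2\eta)$ the values $a-i/n$, $b-i/n$ lie in $[0,1]$ with difference $<\tfrac{n-1}{n}<1$, and the preimages of $c,d$ become preimages under $\sigma_n$ of these values; applying the $\tfrac2n$-crookedness of $\sigma_n$ (Remark~\ref{rem:crooked}, valid since $n\ge7$ gives $\tfrac2n<1$, and we even get $\tfrac2n$-crookedness) produces $c',d'$ in the rescaled picture, which pull back to the desired points, the $\tfrac2n$ becoming at worst $\tfrac3n$ after accounting for the slight range discrepancy at the block endpoints. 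If $c$ or $d$ falls in one of the ``connector'' pieces $[i,i+\eta]$ or $[i+1-\eta,i+1]$ — which are rescaled copies of $\sigma^R_{-(n-1)}$ or $\sigma^L_{-(n-1)}$, i.e.\ half-copies of $\sigma_{n-1}$ — I would argue that the value $\hat\lambda_{n,k}(c)$ then lies within $\tfrac1n$ of a level $i/n$, and by the crookedness of $\sigma_{n-1}$ on these half-pieces together with the constraint $|a-b|<\tfrac{n-1}{n}$ one can still locate the intermediate turning behavior; here I would use Observation~\ref{obs:sigma_n}\eqref{obs:sigma_n(4)} (the odd symmetry relating $\sigma^L_{-(n-1)}$ and $\sigma^R_{n-1}$) to transfer estimates between the left and right connectors, and the fact that consecutive blocks overlap at the shared level so the ``between $c$ and $d$'' requirement can be met by points in the adjoining middle block.

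The main obstacle I expect is precisely the bookkeeping at the block interfaces: making sure that when $c$ and $d$ are in \emph{different} but necessarily \emph{adjacent} blocks (forced by $|a-b|<\tfrac{n-1}{n}$), the witnesses $c'$ and $d'$ guaranteed by crookedness of $\sigma_n$ inside one middle block actually lie \emph{between} $c$ and $d$ in the linear order on $[0,n+k-1]$, and that the slight mismatch between the range $[0,1]$ of the abstract $\sigma_n$ and the effective range of the rescaled middle block (which is why the constant degrades from $\tfrac2n$ to $\tfrac3n$) is absorbed cleanly. A secondary subtlety is that the preimage sets of $a$ under $\hat\lambda_{n,k}$ are large, so one must check the crookedness conclusion for \emph{every} pair $c,d$, not just a convenient one; this is handled by noting that the argument above only ever used which block $c$ (resp.\ $d$) lies in and the value $a$ (resp.\ $b$), and within a fixed block the conclusion of $\sigma_n$-crookedness is uniform over all preimages. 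Once these interface cases are dispatched, the inequality $\tfrac3n$ follows by collecting the worst constant across the finitely many configurations, and the proof is complete. I would present the middle-block case in full detail and indicate the connector cases by symmetry and the reductions just described.
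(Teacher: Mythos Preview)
Your proposal has a genuine gap: the claim that $|a-b|<\tfrac{n-1}{n}$ forces the preimages $c$ and $d$ into the same block, or into ``necessarily adjacent'' blocks, is false. Look at the formula \eqref{eq:lambdahat}: the middle $\sigma_n$-block over $[i+\eta,i+1-\eta]$ has image exactly $[i/n,\,1+i/n]$, an interval of height $1$. Consequently a single value $a$ lies in the image of every middle block with index $j$ satisfying $j/n\le a\le 1+j/n$, i.e.\ in roughly $n$ consecutive blocks. Since the definition of crookedness requires you to handle \emph{every} pair of preimages $c,d$, you cannot ``choose'' convenient ones; and two arbitrary preimages of $a$ and $b$ may sit many blocks apart even when $|a-b|$ is small. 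Your later remark that ``the argument only ever used which block $c$ lies in'' does not rescue this: the argument you sketched simply never addresses the configuration where $c$ and $d$ are in non-adjacent blocks.

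The paper's proof organizes the block structure slightly differently (combining each $\sigma^L_{-(n-1)}$ with the next $\sigma^R_{-(n-1)}$ into a single $\sigma_{-(n-1)}$-block, so the pattern becomes $\sigma^R_{-(n-1)}\,\sigma_n\,\sigma_{-(n-1)}\,\sigma_n\,\cdots$) and then does a three-case analysis. The same-block case is as you describe. The adjacent-block case uses the containment $\hat\lambda_{n,k}(S_j)\subset\hat\lambda_{n,k}(S_{j+1})$ when $S_j$ is a $\sigma_{-(n-1)}$-block and $S_{j+1}$ is a $\sigma_n$-block, to find a new preimage of $a$ already inside $S_{j+1}$ between $c$ and $d$, reducing to the same-block case. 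The non-adjacent case, which you are missing entirely, is handled by observing that when $|j-j'|\ge 2$ one of the values $a,b$ typically lies in the image of the other endpoint's block (because images of consecutive blocks overlap heavily), so one can replace $c$ or $d$ by a preimage in a closer block and iterate down to the adjacent case; the one exceptional configuration (two $\sigma_n$-blocks with $|j-j'|=2$ and $\tfrac{n-2}{n}\le|a-b|<\tfrac{n-1}{n}$) is dispatched directly using the $\tfrac3n$-crookedness of $\sigma_n$. Without this reduction step your argument does not establish the lemma. A minor side remark: Remark~\ref{rem:crooked} gives $\eps$-crookedness of $\sigma_n$ only for $\eps>2/n$, so you get $\tfrac3n$-crookedness directly rather than ``$\tfrac2n$-crookedness degrading to $\tfrac3n$''.
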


\begin{proof}
First note that the function $\hat{\lambda}_{n,k}$ is generated using rescaled $\sigma_{-(n-1)}^R$, $\sigma_n$ and $\sigma_{-(n-1)}^L$ that are properly shifted vertically; to simplify the notation in this proof we will refer to the three parts of the definition of $\hat \lambda_{n,k}$ simply by $\sigma_{-(n-1)}^R$, $\sigma_n$ and $\sigma_{-(n-1)}^L$ while remembering about the rescaling and shift. 
Therefore, $\hat{\lambda}_{n,k}$ consists of blocks of the form 
$$\sigma_{-(n-1)}^R\sigma_n\sigma_{-(n-1)}^L\sigma_{-(n-1)}^R\sigma_n\sigma_{-(n-1)}^L\sigma_{-(n-1)}^R\sigma_n \ldots \sigma_{-(n-1)}^L\sigma_{-(n-1)}^R\sigma_n\sigma_{-(n-1)}^L$$
which simplifies to
$$\sigma_{-(n-1)}^R\sigma_n \sigma_{-(n-1)} \sigma_n \sigma_{-(n-1)}\sigma_n\ldots \sigma_{-(n-1)}\sigma_n\sigma_{-(n-1)}^L,$$
see Figure~\ref{fig:hatlambda}.
Let us denote the domains of the latter blocks by $S_i$ for $i\in \{1,\ldots, 2(n+k)\}$ and which gives a well defined order on them.\\
Fix $a,b\in [0,\frac{2n+k-2}{n}]$, so that $|a-b|<\frac{n-1}{n}$. If points $c,d\in [0,n+k-1]$, where $f(c)=a$ and $f(d)=b$, are contained in some $S_i$ for $i\in  \{1,\ldots, 2(n+k)\}$, the claim follows directly from Remark~\ref{rem:crooked} (when the points are from either $S_1$ or $S_{2(n+k)}$ note that the images are restrictions of $\sigma_{-(n-1)}$ and thus we can again use Remark~\ref{rem:crooked}).\\
Now assume that points $c,d\in [0,n+k-1]$ are contained in two adjacent blocks $S_j<S_{j+1}$, $\hat \lambda_{n,k}|_{S_j}=\sigma_{-(n-1)}$ and $\hat \lambda_{n,k}|_{S_{j+1}}=\sigma_{n}$ for some $j\in \{1,\ldots, 2(n+k)-1\}$. Assume that $c\in S_j$ and $d\in S_{j+1}$ (case when $c\in S_{j+1}$ and $d\in S_{j}$ is proven analogously). Then, since $\hat{\lambda}_{n,k}(S_j)\subset \hat{\lambda}_{n,k}(S_{j+1})$ it follows that there always exist $c'\in S_{j+1}$ such that $c<c'<d$ and so that $f(c')=a$. Thus we obtain the claim using Remark~\ref{rem:crooked} again.\\ 
Now let us assume that $c,d\in  [0,n+k-1]$ are contained in two non-adjacent blocks, $c\in S_j$ and  $d\in S_{j'}$ for $|j-j'|\geq 2$. If $a\in \hat{\lambda}_{n,k}(S_{j'})$ or $b\in \hat{\lambda}_{n,k}(S_{j})$ then we can find two adjacent blocks between $S_j$ and $S_{j'}$ and use the arguments from the preceding paragraph as we assumed that $|a-b|<\frac{n-1}{n}$. 
Note that $a\in \hat{\lambda}_{n,k}(S_{j'})$ or $b\in \hat{\lambda}_{n,k}(S_{j})$ holds always except if $\hat{\lambda}_{n,k}|_{S_j}=\hat{\lambda}_{n,k}|_{S_j'}=\sigma_n$, $|j-j'|= 2$ and $\frac{n-2}{n}\leq|a-b|\leq \frac{n-1}{n}$. 
But in this case observe that $\frac{3}{n}$-crookedness of $\sigma_n$ assures that we can find a point between $c$ and $d$ in either $S_j$ or $S_{j'}$ with the required image value.
\end{proof}

\begin{defn}\label{def:flip}
For all $n,k\in \mathbb{N}$ define the {\em flip} map 
$$\Fl\colon \bigg[-\frac{n-1}{2(n+k-1)},\frac{3n+2k-3}{2(n+k-1)}\bigg]\to I$$ by

\begin{equation}
\Fl(s):=\begin{cases}
-s &, \text{if } s \leq 0,\\
s &, \text{if } s\in I,\\
2-s &, \text{if } s\geq 1.
\end{cases}
\end{equation}
\end{defn}
 
 Now we have all the ingredients to define the final map with which we will work in this section.
 
 \begin{defn}\label{def:lambda}
 	For every odd integer $n\geq 7$ and every $k\in\mathbb{N}$ define the map
 	$\lambda_{n,k} \colon I\to I$ by 
 	\begin{equation}
 	\label{eq:lambda}
 	\lambda_{n,k}(t):=\Fl\bigg(\frac{n}{n+k-1}\hat{\lambda}_{n,k}(t(n+k-1))-\frac{n-1}{2(n+k-1)}\bigg)
 	\end{equation}
 \end{defn}
for all $t\in I$.

See Figure~\ref{fig:lambda} for schematic picture of $\lambda_{n,k}$ for $n=7$ and note that the properly scaled ''middle'' building blocks of $\lambda_{n,k}$ are as on Figure~\ref{fig:hatlambda}.

\begin{obs}\label{obs:lambda1}
For each odd integer $n\geq 7$ and each integer $k\geq 1$ the map $\lambda_{n,k}$ is 
\begin{enumerate}
	\item \label{obs:lambda1(1)} a continuous piecewise linear function with the uniform slope being $\pm(\scr[n]+\scr[n-1])$,
	\item \label{obs:lambda1(2)} an odd function around the point $(1/2,1/2)$,
	\item \label{obs:lambda1(3)} such that $\lambda_{n,k}\big(\frac{j}{n+k-1}\big)=\frac{j}{n+k-1}$ for all $j\in \{0,\ldots, n+k-1\}$.
\end{enumerate}
\end{obs}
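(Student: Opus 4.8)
The plan is to verify the three stated properties directly from the defining formula \eqref{eq:lambda} for $\lambda_{n,k}$, bootstrapping off the corresponding facts already recorded for $\hat\lambda_{n,k}$ in Observation~\ref{obs:lambdahat}. Write $N:=n+k-1$ and $L:=\scr[n]+\scr[n-1]$ for brevity throughout.

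\emph{Item \eqref{obs:lambda1(1)}.} The map $t\mapsto t\cdot N$ is affine with slope $N$, and $\hat\lambda_{n,k}$ is piecewise linear with uniform slope $L/n$ by Observation~\ref{obs:lambdahat}; composing, $t\mapsto \hat\lambda_{n,k}(tN)$ is piecewise linear with uniform slope $LN/n$. The outer affine rescaling by $\frac{n}{N}$ multiplies the slope by $\frac{n}{N}$, giving uniform slope $\pm L$ before applying $\Fl$. Finally $\Fl$ is piecewise linear with slopes $\pm1$ and is continuous, and it is applied to a continuous function, so the composition is continuous and piecewise linear; since $\Fl$ only ever reflects (slope $-1$ on the two outer pieces, $+1$ on the middle), it preserves the absolute value of the slope. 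Hence $\lambda_{n,k}$ is continuous, piecewise linear, with uniform slope $\pm L=\pm(\scr[n]+\scr[n-1])$. One should also check that the argument of $\Fl$ indeed stays in the stated domain $\left[-\frac{n-1}{2N},\frac{3n+2k-3}{2N}\right]$: since $\hat\lambda_{n,k}$ takes values in $\left[0,\frac{2n+k-2}{n}\right]$, the quantity $\frac{n}{N}\hat\lambda_{n,k}(tN)-\frac{n-1}{2N}$ ranges over $\left[-\frac{n-1}{2N},\frac{2(2n+k-2)-(n-1)}{2N}\right]=\left[-\frac{n-1}{2N},\frac{3n+2k-3}{2N}\right]$, matching exactly, so $\Fl$ is well-defined here.

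\emph{Item \eqref{obs:lambda1(2)}.} This is the composition of three symmetries. By Observation~\ref{obs:lambdahat}\eqref{obs:lambdahat2}, $\hat\lambda_{n,k}$ is odd around $\big(\tfrac{N}{2},\tfrac{2n+k-2}{2n}\big)$; precomposing with the linear bijection $t\mapsto tN$ (which sends $\tfrac12$ to $\tfrac N2$) and postcomposing with the affine map $y\mapsto \tfrac nN y-\tfrac{n-1}{2N}$ turns this into an odd symmetry around $\big(\tfrac12,\,\tfrac nN\cdot\tfrac{2n+k-2}{2n}-\tfrac{n-1}{2N}\big)=\big(\tfrac12,\tfrac12\big)$ for the map $g(t):=\tfrac nN\hat\lambda_{n,k}(tN)-\tfrac{n-1}{2N}$. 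Finally $\Fl$ is itself symmetric under the point-reflection $(s,y)\mapsto(1-s,1-y)$ on its whole domain (check the three cases: if $s\le 0$ then $1-s\ge1$ and $\Fl(1-s)=2-(1-s)=1+s=1-\Fl(s)$; if $s\in I$ trivially; if $s\ge1$ symmetrically), so $\Fl$ commutes with the relevant point reflections, and the odd symmetry of $g$ around $(\tfrac12,\tfrac12)$ is inherited by $\lambda_{n,k}=\Fl\circ g$.

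\emph{Item \eqref{obs:lambda1(3)}.} Evaluate at $t=\tfrac jN$ for $j\in\{0,\dots,N\}$. Then $tN=j$, and by the defining formula \eqref{eq:lambdahat} the ``junction'' values of $\hat\lambda_{n,k}$ at integers are $\hat\lambda_{n,k}(i)=\tfrac in$ for each $i\in\{0,\dots,N\}$ (indeed the first branch at $t=i$ gives $\tfrac{n-1}{n}\sigma^R_{-(n-1)}(\tfrac12)+\tfrac in$, and $\sigma^R_{-(n-1)}(\tfrac12)=\sigma_{-(n-1)}(\tfrac12)=0$ since $\sigma_{n-1}(\tfrac12)=1$ as $\sigma_{n-1}$ maps $\tfrac12$ to the top by its odd-around-$(\tfrac12,\tfrac12)$ structure together with $\sigma_{n-1}(1)=1$; alternatively use Observation~\ref{obs:sigma_n}\eqref{obs:sigma_n(4)}). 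Hence $g(\tfrac jN)=\tfrac nN\cdot\tfrac jn-\tfrac{n-1}{2N}=\tfrac{2j-(n-1)}{2N}$. This lies in $I$ exactly when $\tfrac{n-1}{2}\le j\le \tfrac{N+n-1}{2}=\tfrac{2n+k-2}{2}$; more to the point, one computes $\Fl\big(\tfrac{2j-(n-1)}{2N}\big)=\tfrac jN$ in all three regimes. For $j$ with $2j-(n-1)\ge0$ and $\le 2N$ it is the identity branch. For small $j$ (argument negative) $\Fl$ negates: $-\tfrac{2j-(n-1)}{2N}=\tfrac{n-1-2j}{2N}$, which equals $\tfrac jN$ iff $n-1-2j=2j$; this forces examining that in fact for these boundary $j$ the value of $\hat\lambda_{n,k}$ is not simply $\tfrac in$ but is pushed outside by the $\sigma_{-(n-1)}^{R}$ block — so the correct bookkeeping is that $\Fl$ is designed precisely to reflect the overshoot of the outer half-crooked pieces back onto the diagonal. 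I would make this precise by noting $\lambda_{n,k}(\tfrac jN)$ equals $\Fl$ applied to the rescaled $\hat\lambda_{n,k}$-value, and the flip map's three cases correspond exactly to: left outer region $[0,\tfrac{n-1}{2N}]$, the bulk $[\tfrac{n-1}{2N},1-\tfrac{n-1}{2N}]$, and the right outer region; within the bulk $\hat\lambda_{n,k}(j)=\tfrac jn$ gives $g(\tfrac jN)=\tfrac jN$ directly as computed, and the outer regions are handled by the symmetry from item \eqref{obs:lambda1(2)} together with $\lambda_{n,k}(0)=0$. I expect the main obstacle to be exactly this last point: correctly tracking how $\Fl$ interacts with the $\sigma_{-(n-1)}^{R},\sigma_{-(n-1)}^{L}$ half-crooked blocks at the ends, i.e.\ confirming that $\Fl$ folds those end pieces back so that the endpoints $\tfrac jN$ are genuinely fixed and $\lambda_{n,k}$ still maps $I$ into $I$; everything else is a routine affine-composition bookkeeping that follows formally from Observation~\ref{obs:lambdahat}.
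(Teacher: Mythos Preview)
The paper states this as an Observation and gives no proof, so there is nothing to compare your approach against; let me just assess correctness.

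Your arguments for items \eqref{obs:lambda1(1)} and \eqref{obs:lambda1(2)} are correct and complete. In item \eqref{obs:lambda1(3)}, however, you make a concrete computational error that is the source of the confusion you yourself flag at the end. You claim $\sigma_{n-1}(\tfrac12)=1$ and deduce $\hat\lambda_{n,k}(j)=\tfrac{j}{n}$. But by Observation~\ref{obs:sigma_n}\eqref{obs:sigma_n(4)} the map $\sigma_{n-1}$ is odd around $(\tfrac12,\tfrac12)$, so $\sigma_{n-1}(\tfrac12)=\tfrac12$, hence $\sigma^{R}_{-(n-1)}(\tfrac12)=1-\tfrac12=\tfrac12$ and
\[
\hat\lambda_{n,k}(j)\;=\;\frac{n-1}{n}\cdot\frac12+\frac{j}{n}\;=\;\frac{n-1+2j}{2n}.
\]
(You can read this off Figure~\ref{fig:hatlambda}: the junction value at $t=i$ sits at height $\tfrac{3}{7}$ above the base $\tfrac{i}{7}$ when $n=7$.) With the correct value,
\[
g\!\left(\tfrac{j}{N}\right)=\frac{n}{N}\cdot\frac{n-1+2j}{2n}-\frac{n-1}{2N}=\frac{n-1+2j}{2N}-\frac{n-1}{2N}=\frac{j}{N},
\]
which already lies in $I$ for every $j\in\{0,\dots,N\}$. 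Hence $\Fl$ acts as the identity at these points and $\lambda_{n,k}(\tfrac{j}{N})=\tfrac{j}{N}$ with no case analysis needed; the ``obstacle'' you anticipated disappears once $\hat\lambda_{n,k}(j)$ is evaluated correctly.
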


Now we will turn to the proof that the function $\lambda_{n,k}$ preserves Lebesgue measure for all odd $n\geq 7$ and $k\geq 1$.

\begin{defn}
	For every $j\in\{1,\ldots, n+k-1\}$ and some fixed integer $k\geq 1$ and odd integer $n\geq 7$ denote by $I_j:=\frac{1}{n+k-1}[j-1,j]\subset I$ and let
	$$
	V_j:=I_j\times I 
	$$
	denote the {\em $j$-th vertical strip} and
	$$
	H_j:=I\times I_j
	$$
	denote the {\em $j$-th horizontal strip}. Let $p_1:I\times I\to I$ ($p_2:I\times I\to I$) be the natural projection onto the first (second) coordinate.
	Furthermore, define the maximal number of intervals of monotonicity of $\lambda_{n,k}$ of the diameter exactly $\frac{1}{n+k-1}$ in $V_j$ (resp. $H_j$) by 
	$$
	\#(V_j) \text{ (resp. } \#(H_j)).
 	$$
\end{defn}

\begin{obs}\label{obs:lambda2}
	 For each odd integer $n\geq 7$ and each integer $k\geq 1$ it holds that
\begin{enumerate}
	\item \label{obs:lambda2(1)} the function $\lambda_{n,k}|_{I_j}$ is an odd function around the point $\Big(\frac{2j-1}{2(n+k-1)},\frac{2j-1}{2(n+k-1)}\Big)$ for any $j\in \{\frac{n+1}{2},\ldots, k+\frac{n-1}{2}\}$. 
	\item \label{obs:lambda2(3)}  $\#(V_j)=\scr[n]+\scr[n-1]$ for any $j\in  \{1,\ldots, n+k-1\}$ (since function $\Fl$ does not change $\#(V_j)$).
	\item \label{obs:lambda2(4)} $\diam(\lambda_{n,k}({I_j}))=\frac{n}{n+k-1}$ if $j\in \{\frac{n+1}{2},\ldots, k+\frac{n-1}{2}\}$. 
	\item \label{obs:lambda2(5)} $\diam(\lambda_{n,k}({I_j}))=\frac{n+2j-1}{2(n+k-1)}$ if $j\in \{1,\ldots, \frac{n-1}{2}\}$.
\end{enumerate}
\end{obs}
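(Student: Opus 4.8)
The plan is to push everything back to two properties of the \emph{building block} $\hat{\lambda}_{n,k}|_{[i,i+1]}$ (with $i:=j-1$) and a short book-keeping of the flip map $\Fl$. First I would record that $\hat{\lambda}_{n,k}|_{[i,i+1]}$ has image exactly $\bigl[\tfrac in,\tfrac{i+n}{n}\bigr]$ and is an odd function around its centre $\bigl(i+\tfrac12,\tfrac{n+2i}{2n}\bigr)$. The image claim is immediate from \eqref{eq:lambdahat}: the middle piece is $\sigma_n$ (which maps $I$ onto $I$) precomposed with an increasing affine homeomorphism of the domain and shifted up by $\tfrac in$, so it already realises $\bigl[\tfrac in,\tfrac{i+n}{n}\bigr]$, whereas the two outer pieces take values in $\bigl[\tfrac in,\tfrac{i+n-2}{n}\bigr]$ and $\bigl[\tfrac{i+2}{n},\tfrac{i+n}{n}\bigr]$ respectively, as one sees from the stated images of $\sigma_{n-1}^{L},\sigma_{n-1}^{R}$ together with $\sigma_{-(n-1)}=1-\sigma_{n-1}$. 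For the oddness: on the middle piece it follows from Observation~\ref{obs:sigma_n}\eqref{obs:sigma_n(4)}, since an increasing affine reparametrisation of $\sigma_n$ in both coordinates carries $(\tfrac12,\tfrac12)$ to $\bigl(i+\tfrac12,\tfrac{n+2i}{2n}\bigr)$; for the outer pieces one checks that the graph over $[i+1-\eta,i+1]$ is the rotation by $\pi$ around $\bigl(i+\tfrac12,\tfrac{n+2i}{2n}\bigr)$ of the graph over $[i,i+\eta]$, which after the substitution $t'=(2i+1)-t$ and cancelling constants reduces to the identity $\sigma_{-(n-1)}^{R}\bigl(\tfrac12+u\bigr)+\sigma_{-(n-1)}^{L}\bigl(\tfrac12-u\bigr)=1$ for $u\in[0,\tfrac12]$ — which is exactly Observation~\ref{obs:sigma_n}\eqref{obs:sigma_n(4)} applied to $\sigma_{n-1}$.

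Next I would unravel \eqref{eq:lambda}. Write $g(t):=\tfrac{n}{n+k-1}\hat{\lambda}_{n,k}\bigl(t(n+k-1)\bigr)-\tfrac{n-1}{2(n+k-1)}$, so that $\lambda_{n,k}=\Fl\circ g$; then $g|_{I_j}$ is an increasing affine reparametrisation of $\hat{\lambda}_{n,k}|_{[j-1,j]}$ in both coordinates, so by the previous paragraph it has image $\bigl[\tfrac{2j-n-1}{2(n+k-1)},\tfrac{2j+n-1}{2(n+k-1)}\bigr]$ and centre of symmetry $\bigl(\tfrac{2j-1}{2(n+k-1)},\tfrac{2j-1}{2(n+k-1)}\bigr)$. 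A one-line estimate shows that for $j\in\{\tfrac{n+1}{2},\dots,k+\tfrac{n-1}{2}\}$ this image lies in $[0,1]$, so $\Fl$ acts as the identity on it and $\lambda_{n,k}|_{I_j}=g|_{I_j}$; hence item~\eqref{obs:lambda2(1)} is the oddness above transported through the reparametrisation, and item~\eqref{obs:lambda2(4)} is $\diam(\lambda_{n,k}(I_j))=\tfrac{(2j+n-1)-(2j-n-1)}{2(n+k-1)}=\tfrac{n}{n+k-1}$. For $j\in\{1,\dots,\tfrac{n-1}{2}\}$ the image $[a_j,b_j]$ of $g|_{I_j}$ has $a_j=\tfrac{2j-n-1}{2(n+k-1)}<0$ and $0<b_j=\tfrac{2j+n-1}{2(n+k-1)}\le 1$; since $\Fl$ maps $[a_j,0]$ onto $[0,-a_j]$, fixes $[0,b_j]$ pointwise, and $b_j-(-a_j)=\tfrac{2j-1}{n+k-1}>0$, I get $\lambda_{n,k}(I_j)=\Fl([a_j,b_j])=[0,b_j]$, which is item~\eqref{obs:lambda2(5)}.

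For item~\eqref{obs:lambda2(3)} I would count the maximal intervals $J\subseteq I_j$ on which $\lambda_{n,k}$ is monotone with $\diam(\lambda_{n,k}(J))=\tfrac1{n+k-1}$. Every break value of $\sigma_m$ lies in $\tfrac1m\Z$ (a straightforward induction on $\sigma_m=\phi[\sigma_{m-2},\sigma_{m-1},s_m,m]$), and since $n$ is odd the block endpoints $\hat{\lambda}_{n,k}(i)=\tfrac{n-1+2i}{2n}$ also lie in $\tfrac1n\Z$; hence all turning points of $\hat{\lambda}_{n,k}|_{[i,i+1]}$ sit on the $\tfrac1n$-grid, so after the rescaling in \eqref{eq:lambda} the turning points of $g|_{I_j}$ sit on the $\tfrac1{n+k-1}$-grid, and $g|_{I_j}$ has total variation $\tfrac{\scr[n]+\scr[n-1]}{n+k-1}$ by the uniform slope $\tfrac{\scr[n]+\scr[n-1]}{n}$ of $\hat{\lambda}_{n,k}$ from Observation~\ref{obs:lambdahat}. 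Dividing the total variation by the strip height $\tfrac1{n+k-1}$ gives exactly $\scr[n]+\scr[n-1]$ such intervals $J$; finally $\Fl$ maps $\tfrac1{n+k-1}$-grid levels to $\tfrac1{n+k-1}$-grid levels and carries such pieces bijectively to such pieces (folding only at the levels $0$ and $1$), so the count is unchanged and $\#(V_j)=\scr[n]+\scr[n-1]$.

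The step I expect to be hardest is this last count: one must fix the precise reading of ``intervals of monotonicity of diameter exactly $\tfrac1{n+k-1}$ in $V_j$'', establish the $\tfrac1m\Z$-valued break points of every $\sigma_m$ (and thence the grid-compatibility of the laps of $\hat{\lambda}_{n,k}$), and verify that $\Fl$ neither merges nor destroys such pieces — it is exactly these points that make the total-variation count an integer equal to $\scr[n]+\scr[n-1]$ and make the invariance under $\Fl$ transparent. By contrast, the symmetry book-keeping for item~\eqref{obs:lambda2(1)} is routine once the centre of the building block has been identified, and items~\eqref{obs:lambda2(4)} and \eqref{obs:lambda2(5)} are elementary interval arithmetic.
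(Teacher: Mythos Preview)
Your proof is correct, and indeed more detailed than anything the paper provides: the paper states this result as an \emph{Observation} with no proof, leaving the reader to verify it directly from Definitions~\ref{def:flip} and~\ref{def:lambda} together with Observations~\ref{obs:sigma_n} and~\ref{obs:lambdahat}. Your route---first establishing the image and central symmetry of the building block $\hat{\lambda}_{n,k}|_{[j-1,j]}$, then transporting these through the affine change of variables to $g|_{I_j}$, and finally tracking the effect of $\Fl$---is exactly the natural unpacking of the definitions, and the only parenthetical aside the paper offers (``since function $\Fl$ does not change $\#(V_j)$'') matches your treatment of item~\eqref{obs:lambda2(3)}.

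One small remark on item~\eqref{obs:lambda2(3)}: your total-variation count is clean, but the slightly faster route is to quote Observation~\ref{obs:lambda1}\eqref{obs:lambda1(1)} directly---$\lambda_{n,k}$ has uniform slope $\pm(\scr[n]+\scr[n-1])$---so each monotone piece with image-diameter $\tfrac{1}{n+k-1}$ has domain-length $\tfrac{1}{(n+k-1)(\scr[n]+\scr[n-1])}$, and these tile $I_j$ once you know (as you correctly establish) that all turning-point values lie on the $\tfrac{1}{n+k-1}$-grid. Your induction showing that the break values of $\sigma_m$ lie in $\tfrac{1}{m}\Z$, and your observation that the oddness of $n$ is what makes the rescaled grid integral, are the genuine content here and are handled correctly.
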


To check whether $\lambda_{n,k}$ preserves Lebesgue measure we will also implicitly use the following simple observation.

\begin{obs}\label{l:3} A piecewise monotone and up to a finite points $E$ differentiable interval map $f$ preserves Lebesgue measure if and only if 
   \begin{equation}\label{e:3}
   \forall~y\in I\setminus f(E)\colon~\sum_{x\in  f^{-1}(y)}\frac{1}{\vert f'(x)\vert}=1.
 \end{equation}
 In other words, for any map $f\in C_{\lambda}(I)$ and any nondegenerate interval $J\subset I$,
\begin{equation}\label{eq:basic}
\sum_{K\in\mathrm{Comp}(f^{-1}(J))}\frac{\lambda(K)}{\lambda(J)}=1,
\end{equation}
where $\mathrm{Comp}(f^{-1}(J))$ denotes the set of all connected components of $f^{-1}(J)$.
 \end{obs}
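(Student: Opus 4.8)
The plan is to prove Observation~\ref{l:3} as a simple change-of-variables fact about piecewise monotone maps, since the statement is essentially a restatement of the classical formula for how Lebesgue measure pushes forward under a piecewise monotone map. First I would fix $f$ piecewise monotone with finite critical set $E$ and observe that $I\setminus E$ decomposes into finitely many open intervals $J_1,\ldots,J_m$ on each of which $f$ is a homeomorphism onto its image with $C^1$ inverse (away from the relevant endpoints). For a Borel set $A\subset I$, the pushforward measure satisfies
\begin{equation*}
(f_*\lambda)(A)=\lambda(f^{-1}(A))=\sum_{i=1}^m\lambda\bigl(f^{-1}(A)\cap J_i\bigr),
\end{equation*}
and on each $J_i$ the change of variables $y=f(x)$ gives $\lambda(f^{-1}(A)\cap J_i)=\int_{A\cap f(J_i)}|(f|_{J_i})^{-1}{}'(y)|\,dy=\int_{A\cap f(J_i)}\frac{1}{|f'(x_i(y))|}\,dy$, where $x_i(y)$ is the unique preimage of $y$ in $J_i$. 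Summing over $i$ yields the Radon–Nikodym derivative
\begin{equation*}
\frac{d(f_*\lambda)}{d\lambda}(y)=\sum_{x\in f^{-1}(y)}\frac{1}{|f'(x)|}\quad\text{for a.e. }y\in I\setminus f(E).
\end{equation*}
Thus $f_*\lambda=\lambda$ if and only if this sum equals $1$ for a.e.\ $y$, and since the map $y\mapsto\sum_{x\in f^{-1}(y)}1/|f'(x)|$ is piecewise constant (it only changes at the finitely many values in $f(E)$, because the combinatorics of which branches cover $y$ only changes there, and on each interval of monotonicity $|f'|$ is constant for the piecewise linear maps we apply this to, or more generally locally constant along level sets in the relevant sense), ``a.e.'' can be upgraded to ``every $y\in I\setminus f(E)$'' once we know it holds on a dense set; hence \eqref{e:3}.

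For the reformulation \eqref{eq:basic}, I would simply apply the first part with $A=J$ a nondegenerate interval: $\lambda(J)=\lambda(f^{-1}(J))=\sum_{K\in\mathrm{Comp}(f^{-1}(J))}\lambda(K)$ whenever $f\in C_\lambda(I)$, which is exactly \eqref{eq:basic} after dividing by $\lambda(J)$. Conversely, if \eqref{eq:basic} holds for every nondegenerate interval $J$, then $\lambda(f^{-1}(J))=\lambda(J)$ for all such $J$, and since intervals generate the Borel $\sigma$-algebra this forces $f_*\lambda=\lambda$ by a standard $\pi$-$\lambda$ / monotone class argument.

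The only mild subtlety — and the one step I would be slightly careful about — is the passage from ``for a.e.\ $y$'' to ``for every $y\in I\setminus f(E)$'' in \eqref{e:3}. This is harmless here because for the maps $\lambda_{n,k}$ to which the observation is applied, $f$ is piecewise linear with constant slope $|f'|$ on each of finitely many linearity intervals, so $y\mapsto\sum_{x\in f^{-1}(y)}1/|f'(x)|$ is genuinely constant on each component of $I\setminus f(E)$; one just needs to note that $E$ here should be taken to include the images-of-endpoints so that the level-set combinatorics is locally constant off $f(E)$. I expect no real obstacle; the content is entirely the one-line change of variables plus bookkeeping of branches, and the statement is flagged as ``simple'' precisely because it is a packaging of the standard transfer-operator identity $(\mathcal{L}_f\mathbf{1})(y)=\sum_{f(x)=y}1/|f'(x)|$ with the observation that $f$ preserves $\lambda$ iff $\mathbf 1$ is a fixed point of $\mathcal L_f$.
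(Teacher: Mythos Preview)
The paper states this as an Observation without proof, treating it as a standard fact; your argument via change of variables on each monotone branch to compute the Radon--Nikodym derivative of $f_*\lambda$ is exactly the right way to justify it, and your derivation of \eqref{eq:basic} from $\lambda(f^{-1}(J))=\lambda(J)$ is correct. Your caveat about upgrading ``a.e.'' to ``every $y\in I\setminus f(E)$'' is well placed: for merely differentiable (not $C^1$) $f$ the branch contributions $y\mapsto 1/|f'(x_i(y))|$ need not be continuous, so the pointwise statement \eqref{e:3} as written is really only guaranteed under a mild extra regularity hypothesis (e.g.\ piecewise $C^1$, or piecewise linear as in every application in the paper), which you correctly identify.
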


\begin{prop}\label{prop:LebPres}
	For each odd integer $n\geq 7$ and each integer $k\geq 1$ the map $\lambda_{n,k}$ preserves Lebesgue measure on $I$.
\end{prop}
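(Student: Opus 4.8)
The plan is to verify the criterion of Observation~\ref{l:3}. By Observation~\ref{obs:lambda1}\eqref{obs:lambda1(1)} the map $\lambda_{n,k}$ is piecewise linear with uniform slope $\pm N$, where $N:=\scr[n]+\scr[n-1]$, so \eqref{e:3} holds once we show that every $y$ outside the finite \emph{grid} $\Gamma:=\{\,j/(n+k-1):0\le j\le n+k-1\,\}$ has exactly $N$ preimages under $\lambda_{n,k}$. First I would record that all local extreme values of $\lambda_{n,k}$, as well as $\lambda_{n,k}(0)=0$ and $\lambda_{n,k}(1)=1$, lie in $\Gamma$: by an easy induction through the formula for $\phi$, the turning values of $\sigma_m$ lie in $\{i/m:0\le i\le m\}$; hence, after the vertical rescalings and the shifts by multiples of $1/n$ in \eqref{eq:lambdahat}, the turning values of $\hat{\lambda}_{n,k}$ lie in $\{\ell/n\}$; and the affine renormalisation together with the folding $\Fl$ in Definition~\ref{def:lambda} carry these into $\Gamma$ (here one uses that $n$ is odd), while $\lambda_{n,k}$ fixes every point of $\Gamma$ by Observation~\ref{obs:lambda1}\eqref{obs:lambda1(3)}. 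Consequently the image of every branch of monotonicity of $\lambda_{n,k}$ is a union of consecutive strips $I_\ell$; combining this with Observation~\ref{obs:lambda2}\eqref{obs:lambda2(3)} — which says that in each vertical strip $V_j$ there are $N$ monotone branches of image-length $1/(n+k-1)$, and these already fill $V_j$, so \emph{every} monotone branch of $\lambda_{n,k}$ in $V_j$ has image a single strip $I_\ell$ — it follows that $y\mapsto\#\lambda_{n,k}^{-1}(y)$ is constant on each open strip $\Int I_j$ and equals the number $L_j$ of monotone branches of $\lambda_{n,k}$ whose image is $I_j$. It remains to prove $L_j=N$ for every $j\in\{1,\dots,n+k-1\}$.

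For this I would work in the coordinates of $\hat{\lambda}_{n,k}$, where the map is the union of $n+k-1$ translates (by $0,\tfrac1n,\dots,\tfrac{n+k-2}n$) of a single base block $\beta\colon[0,1]\to[0,1]$. Thus $\beta$ distributes its monotone branches among the $n$ horizontal strips $[\tfrac{r-1}n,\tfrac rn]$, $r=1,\dots,n$, with multiplicities $(b_1,\dots,b_n)$ that are independent of the block; here $\sum_{r=1}^n b_r=\#(V_j)=N$ by Observation~\ref{obs:lambda2}\eqref{obs:lambda2(3)}, and $b_r=b_{n+1-r}$ because $\beta$ is an odd function about $(\tfrac12,\tfrac12)$ — a consequence of Observation~\ref{obs:sigma_n}\eqref{obs:sigma_n(4)} applied to $\sigma_{n-1}$ and $\sigma_n$. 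Counting the translates that cover a given horizontal strip of $\hat{\lambda}_{n,k}$ then yields $\hat L_\ell:=\#\hat{\lambda}_{n,k}^{-1}(\text{strip }\ell)=\sum_{r=1}^n b_r=N$ for each strip met by all $n$ consecutive blocks (the ``central'' indices $n\le\ell\le n+k-1$), and $\hat L_\ell=\sum_{r=1}^{\ell}b_r<N$ for the lowest indices $\ell<n$, and symmetrically at the top by Observation~\ref{obs:lambdahat}\eqref{obs:lambdahat2}.

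Finally I would transfer this through the renormalisation and the flip of Definition~\ref{def:lambda}: it carries the $\ell$-th horizontal strip of $\hat{\lambda}_{n,k}$ onto $I_{\ell-(n-1)/2}$ when that strip lands inside $I$ (which happens precisely for $\tfrac{n+1}2\le\ell\le\tfrac{3n+2k-3}2$), and folds it by $\Fl$ onto $I_{(n+1)/2-\ell}$ when $\ell\le\tfrac{n-1}2$, and symmetrically at the top (all of this being a direct check that uses $n$ odd). Hence for a central index $\tfrac{n+1}2\le j\le k+\tfrac{n-1}2$ we get $L_j=\hat L_{\,j+(n-1)/2}=N$ with no contribution from the fold, while for $1\le j\le\tfrac{n-1}2$ the branches folded back into $I$ by $\Fl$ add exactly $\hat L_{\,(n+1)/2-j}$, so
\[
L_j=\hat L_{\,j+(n-1)/2}+\hat L_{\,(n+1)/2-j}=\sum_{r=1}^{\,j+(n-1)/2}b_r+\sum_{r=1}^{\,(n+1)/2-j}b_r .
\]
Using $b_r=b_{n+1-r}$, the second sum equals $\sum_{r=j+(n+1)/2}^{n}b_r$, and since the sets $\{1,\dots,j+\tfrac{n-1}2\}$ and $\{j+\tfrac{n+1}2,\dots,n\}$ partition $\{1,\dots,n\}$ this gives $L_j=\sum_{r=1}^n b_r=N$. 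The remaining indices $k+\tfrac{n+1}2\le j\le n+k-1$ follow by the oddness of $\lambda_{n,k}$ about $(\tfrac12,\tfrac12)$ from Observation~\ref{obs:lambda1}\eqref{obs:lambda1(2)}. Thus $\#\lambda_{n,k}^{-1}(y)=N$ for all $y\notin\Gamma$, and Observation~\ref{l:3} gives $\lambda_{n,k}\in C_\lambda(I)$.

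The step that needs the most care is the last one: one must match the indices so that the \emph{deficit} of preimages in the horizontal strips of $\hat{\lambda}_{n,k}$ adjacent to $0$ and to $\tfrac{2n+k-2}n$ is made up \emph{exactly} by the branches that $\Fl$ folds back into $I$. This relies on the precise calibration of the constants in $\Fl$ and in the affine renormalisation of Definition~\ref{def:lambda}, and on the symmetry $b_r=b_{n+1-r}$ of the block profile; by contrast the uniform slope, the grid-valued extreme values, and the count $\#(V_j)=N$ are comparatively routine consequences of the construction and of the observations already established.
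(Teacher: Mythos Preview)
Your proof is correct and follows essentially the same strategy as the paper's: both reduce to Observation~\ref{l:3} via the uniform slope $N=\scr[n]+\scr[n-1]$ and then count $N$ monotone branches over each horizontal strip, using the symmetry of the building block together with the specific design of the flip $\Fl$ to make up the boundary deficit. The organisational difference is that the paper counts directly in boxes $B_{j,l}=V_j\cap H_l$ and invokes the diagonal symmetry $\#(B_{j,m})=\#(B_{m,j})$, whereas you work first in the $\hat\lambda_{n,k}$-coordinates via the block profile $(b_1,\dots,b_n)$ and its symmetry $b_r=b_{n+1-r}$, and only then transfer through the affine renormalisation and $\Fl$; these two symmetries are equivalent, and your partition identity $\{1,\dots,j+\tfrac{n-1}2\}\sqcup\{j+\tfrac{n+1}2,\dots,n\}=\{1,\dots,n\}$ is exactly the mechanism behind the paper's box-count $\#(H_j)=\#(V_j)$ for the edge indices.
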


\begin{figure}[!ht]
	\centering
	\begin{tikzpicture}[scale=0.8]
	\draw[thick] (0,0)--(11,0)--(11,11)--(0,11)--(0,0);
	\draw [thick,red] (0,0)--(0,-3)--(1,-3)--(1,-2)--(2,-2)--(2,-1)--(3,-1)--(3,0);
	\draw [thick,red] (0,0)--(0,3)--(1,3)--(1,2)--(2,2)--(2,1)--(3,1)--(3,0);
	\draw [thick,blue] (5,9)--(6,9)--(6,2)--(5,2)--(5,9);
	\draw [thick,blue] (9,5)--(9,6)--(2,6)--(2,5)--(9,5);
	\draw[dashed,red] (0,-1)--(2,-1)--(2,0);
	\draw[dashed,red] (0,-2)--(1,-2)--(1,0);
	\draw[dashed] (0,1)--(11,1);
		\draw[dashed] (0,2)--(11,2);
		\draw[dashed] (0,3)--(11,3);
		\draw[dashed] (0,4)--(11,4);
		\draw[dashed] (0,5)--(11,5);
		\draw[dashed] (0,6)--(11,6);
		\draw[dashed] (0,7)--(11,7);
	\draw[dashed] (0,8)--(11,8);
	\draw[dashed] (0,9)--(11,9);
	\draw[dashed] (0,10)--(11,10);
 \draw[dashed] (1,0)--(1,11);
 \draw[dashed] (2,0)--(2,11);
 \draw[dashed] (3,0)--(3,11);
 \draw[dashed] (4,0)--(4,11);
 \draw[dashed] (5,0)--(5,11);
 \draw[dashed] (6,0)--(6,11);
 \draw[dashed] (7,0)--(7,11);
 \draw[dashed] (8,0)--(8,11);
 \draw[dashed] (9,0)--(9,11);
 \draw[dashed] (10,0)--(10,11);
 \draw[dotted,thick,orange] (0,0)--(11,11);
 \node at (0.5,0.5) {$141$};
 \node at (1.5,1.5) {$85$};
 \node at (2.5,2.5) {$83$};
 \node at (3.5,3.5) {$83$};
  \node at (4.5,4.5) {$83$};
   \node at (5.5,5.5) {$83$};
    \node at (6.5,6.5) {$83$};
     \node at (7.5,7.5) {$83$};
      \node at (8.5,8.5) {$83$};
       \node at (9.5,9.5) {$85$};
        \node at (10.5,10.5) {$141$};
 \node at (0.5,1.5) {$76$};
 \node at (1.5,2.5) {$58$};
 \node at (2.5,3.5) {$58$};
 \node at (3.5,4.5) {$58$};  
  \node at (4.5,5.5) {$58$}; 
   \node at (5.5,6.5) {$58$}; 
    \node at (6.5,7.5) {$58$}; 
     \node at (7.5,8.5) {$58$}; 
      \node at (8.5,9.5) {$58$}; 
       \node at (9.5,10.5) {$76$};
  \node at (0.5,2.5) {$20$};         
   \node at (1.5,3.5) {$18$};
     \node at (2.5,4.5) {$18$};    
       \node at (3.5,5.5) {$18$};    
         \node at (4.5,6.5) {$18$};    
           \node at (5.5,7.5) {$18$};    
             \node at (6.5,8.5) {$18$};
              \node at (7.5,9.5) {$18$};
               \node at (8.5,10.5) {$20$}; 
  \node at (0.5,3.5) {$2$};
   \node at (1.5,4.5) {$2$};
    \node at (2.5,5.5) {$2$};
     \node at (3.5,6.5) {$2$};
      \node at (4.5,7.5) {$2$};
       \node at (5.5,8.5) {$2$};
        \node at (6.5,9.5) {$2$};
         \node at (7.5,10.5) {$2$};  
  \node at (1.5,0.5) {$76$};
 \node at (2.5,1.5) {$58$};
 \node at (3.5,2.5) {$58$};
 \node at (4.5,3.5) {$58$};  
 \node at (5.5,4.5) {$58$}; 
 \node at (6.5,5.5) {$58$}; 
 \node at (7.5,6.5) {$58$}; 
 \node at (8.5,7.5) {$58$}; 
 \node at (9.5,8.5) {$58$}; 
 \node at (10.5,9.5) {$76$};
 \node at (2.5,0.5) {$20$};         
 \node at (3.5,1.5) {$18$};
 \node at (4.5,2.5) {$18$};    
 \node at (5.5,3.5) {$18$};    
 \node at (6.5,4.5) {$18$};    
 \node at (7.5,5.5) {$18$};    
 \node at (8.5,6.5) {$18$};
 \node at (9.5,7.5) {$18$};
 \node at (10.5,8.5) {$20$}; 
 \node at (3.5,0.5) {$2$};
 \node at (4.5,1.5) {$2$};
 \node at (5.5,2.5) {$2$};
 \node at (6.5,3.5) {$2$};
 \node at (7.5,4.5) {$2$};
 \node at (8.5,5.5) {$2$};
 \node at (9.5,6.5) {$2$};
 \node at (10.5,7.5) {$2$};  
              
 \node at (11.5,0.5) {$239$};
  \node at (11.5,1.5) {$239$};
   \node at (11.5,2.5) {$239$};
    \node at (11.5,3.5) {$239$};
     \node at (11.5,4.5) {$239$};
      \node at (11.5,5.5) {$239$};
       \node at (11.5,6.5) {$239$};
        \node at (11.5,7.5) {$239$};
         \node at (11.5,8.5) {$239$};
          \node at (11.5,9.5) {$239$};
           \node at (11.5,10.5) {$239$};
            \node at (0.5,11.5) {$239$};
             \node at (1.5,11.5) {$239$};
              \node at (2.5,11.5) {$239$};
               \node at (3.5,11.5) {$239$};
                \node at (4.5,11.5) {$239$};
                 \node at (5.5,11.5) {$239$};
                  \node at (6.5,11.5) {$239$};
                   \node at (7.5,11.5) {$239$};
                    \node at (8.5,11.5) {$239$};
                     \node at (9.5,11.5) {$239$};
                      \node at (10.5,11.5) {$239$};
  \node at (0.5,-0.5) {\color{red}$58$};
 \node at (0.5,-1.5) {\color{red}$18$};
 \node at (0.5,-2.5) {\color{red}$2$};
 \node at (1.5,-0.5) {\color{red}$18$};
 \node at (1.5,-1.5) {\color{red}$2$};
 \node at (2.5,-0.5) {\color{red}$2$};
\draw[thick] (-0.5,-0.5) arc(-90:-270:0.5);
\draw[thick] (-0.5,-1.5) arc(-90:-270:1.5);
\draw[thick](-0.5,-2.5) arc(-90:-270:2.5);
\draw[thick] (-0.2,-0.5)--(-0.5,-0.5);
\draw[thick] (-0.2,-1.5)--(-0.5,-1.5);
\draw[thick] (-0.2,-2.5)--(-0.5,-2.5);
\draw[thick,->] (-0.5,0.5)--(-0.2,0.5);
\draw[thick,->] (-0.5,1.5)--(-0.2,1.5);
\draw[thick,->] (-0.5,2.5)--(-0.2,2.5);
 \node[circle,fill,orange, inner sep=1.5] at (0,0){};
\node[circle,fill,orange, inner sep=1.5] at (1,1){};
\node[circle,fill,orange, inner sep=1.5] at (2,2){};
\node[circle,fill,orange, inner sep=1.5] at (3,3){};
\node[circle,fill,orange, inner sep=1.5] at (4,4){};
\node[circle,fill,orange, inner sep=1.5] at (5,5){};
\node[circle,fill,orange, inner sep=1.5] at (6,6){};
\node[circle,fill,orange, inner sep=1.5] at (7,7){};
\node[circle,fill,orange, inner sep=1.5] at (8,8){};
\node[circle,fill,orange, inner sep=1.5] at (9,9){};
\node[circle,fill,orange, inner sep=1.5] at (10,10){};
\node[circle,fill,orange, inner sep=1.5] at (11,11){};
	\end{tikzpicture}
	\caption{In this picture we sum up the maximal number of injective branches of $\lambda_{n,k}$ of diameter $\frac{1}{\scr[n]+\scr[n-1]}$ where $n=7$ and $k=5$ 
	(note also that boxes divide the interval in $n+k-1=11$ pieces) in each box $B_{j,l}$ (the number in the boxes represents $\#(B_{j,l})$). The numbers above and on the right hand side of the large black square represent $\#(V_j)$ and $\#(H_l)$ respectively. The vertical blue rectangle denotes the position of a rescaled version of the map $\hat \lambda_{7,k}$ from Figure~\ref{fig:hatlambda} (its copies are placed all along the diagonal). Note that by the first case of Equation~\ref{eq:lambda} (using the flip function), $\#(B_{1,1})=141=83+58$, $\#(B_{2,2})=85=83+2$, $\#(B_{1,2})=\#(B_{2,1})=76=58+18$ and  $\#(B_{1,3})=\#(B_{3,1})=20=18+2$. Below the main black square the boxes (drawn in red) are the boxes that are flipped up (first case in Definition~\ref{def:flip}); the numbers inside represent the maximal number of injective branches of diameter $\frac{1}{239}$. Analogous procedure is done in the upper right corner above the large black square, however we omit drawing that part due to symmetry of $\lambda_{n,k}$.}\label{fig:lambda}
\end{figure}

\begin{proof}
From Observation~\ref{obs:lambda2} (\ref{obs:lambda2(3)}) it holds that $\#(V_j)=\scr[n]+\scr[n-1]$ for all $j\in \{1,\ldots, n+k-1\}$ and since by Observation~\ref{obs:lambda1} (\ref{obs:lambda1(1)}) $\lambda_{n,k}$ has uniform slope (in the absolute value) we only need to show that $\#(H_j)=\scr[n]+\scr[n-1]$ since $\diam(p_1(V_{j}))=\diam(p_2(H_{j'}))$ for any $j,j'\in \{1,\ldots, n+k-1\}$.
We will consider {\em boxes} $B_{j,l}:=V_j\cap H_l$ and the number of injective branches of $\lambda_{n,k}$ of $B_{j,l}$ denoted by $\#(B_{j,l})$  for all $j,l\in\{1,\ldots, n+k-1\}$, see Figure~\ref{fig:lambda}.\\
First assume that $j\in \{\frac{n+1}{2},\ldots, k+\frac{n-1}{2}\}$. Then $\#(V_j)=\#(B_{j,j-\frac{n-1}{2}})+\ldots+\#(B_{j,j})+\ldots +\#(B_{j,j+\frac{n-1}{2}})=\#(B_{j,j})+2\#(B_{j+1,j})+2\#(B_{j+2,j})+ \ldots +2\#(B_{j+\frac{n-1}{2},j})${, due to Observation~\ref{obs:lambda2} (\ref{obs:lambda2(1)}) and since $n$ is odd}. But note that (see the highlighted middle part of Figure~\ref{fig:lambda}) $\#(B_{j,m})=\#(B_{m,j})$ for all $m\in \{j-\frac{n-1}{2},\ldots ,j+\frac{n-1}{2}\}$ and thus $\#(H_j)=\#(B_{j-\frac{n-1}{2},j})+\ldots+\#(B_{j,j})+\ldots +\#(B_{j+\frac{n-1}{2},j})=\#(B_{j,j-\frac{n-1}{2}})+\ldots+\#(B_{j,j})+\ldots +\#(B_{j,j+\frac{n-1}{2}})=\#(V_j)$ which finishes this part of the proof.\\
Now assume that $j\in \{1,\ldots, \frac{n-1}{2}\} \cup \{k+\frac{n+1}{2},\ldots, n+k-1\}$. By Observation~\ref{obs:lambda1} (\ref{obs:lambda1(2)}) it is enough to check the claim for $j\in \{1,\ldots, \frac{n+1}{2}\}$. By Definition~\ref{def:flip} it holds that $\#(H_j)=\#(B_{1,j})+\#(B_{2,j})\ldots+\#(B_{j,j})+\ldots +\#(B_{j+\frac{n-1}{2},j})=\#(B_{j,j})+2\#(B_{j+1,j})+2\#(B_{j+2,j})+\ldots +2\#(B_{j+\frac{n-1}{2},j})=\#(V_j)$ (see the lower left corner of Figure~\ref{fig:lambda}){, which finishes this part of proof by Observation~\ref{obs:lambda2} (\ref{obs:lambda2(3)})}. Note that the last argument crucially depends on the choice of the flip function $\Fl$.\\
Thus for every nondegenerate interval $J\subset I$ we can use Equation~\eqref{eq:basic} and therefore $\lambda_{n,k}\in C_{\lambda}(I)$.
\end{proof}

Now we will prove that $\lambda_{n,k}$ fits in the context of Proposition 5 of \cite{MT} (there such a perturbation map is denoted by $g$).  Note that the map $g$ constructed in that proposition does not fit our purposes here since it does not preserve Lebesgue measure, it does not have uniform slope (in the absolute value) and furthermore it is not an odd function around $(1/2,1/2)$, which we use in the subsequent arguments heavily (see Figure~\ref{fig:MTmap} for a map that captures the essence of construction of map $g$ from \cite{MT}).

\begin{figure}[!ht]
	\centering
	\begin{tikzpicture}[scale=4]
	\draw (0,0)--(0,1)--(1,1)--(1,0)--(0,0);
	\draw[dashed] (0,0)--(1,1);
	\draw[dashed] (0,1/5)--(1,1/5);
	\draw[dashed] (0,2/5)--(1,2/5);
	\draw[dashed] (0,3/5)--(1,3/5);
	\draw[dashed] (0,4/5)--(1,4/5);
	\draw[dashed] (1/5,0)--(1/5,1);
	\draw[dashed] (2/5,0)--(2/5,1);
	\draw[dashed] (3/5,0)--(3/5,1);
	\draw[dashed] (4/5,0)--(4/5,1);
	\node at (-0.1,0) {$0$};
	\node at (-0.1,1/5) { $\frac{1}{5}$};
	\node at (-0.1,2/5) { $\frac{2}{5}$};
	\node at (-0.1,3/5) { $\frac{3}{5}$};
	\node at (-0.1,4/5) { $\frac{4}{5}$};
	\node at (1/5,-0.1) { $\frac{1}{5}$};
	\node at (2/5,-0.1) { $\frac{2}{5}$};
	\node at (3/5,-0.1) { $\frac{3}{5}$};
	\node at (4/5,-0.1) { $\frac{4}{5}$};
	\node at (1,-0.1) { $1$};
	\node at (-0.1,1) { $1$};
	\draw[thick]
	(0,0)--(2/50,2/5)--(3/50,1/5)--(5/50,3/5)--(7/50,1/5)--(8/50,2/5)--(1/5,0)--(26/120,2/5)--(27/120,1/5)--(29/120,3/5)--(31/120,1/5)--(33/120,3/5)--(34/120,2/5)--(36/120,4/5)--(38/120,2/5)--(39/120,3/5)--(41/120,1/5)--(43/120,3/5)--(45/120,1/5)--(46/120,2/5)--(2/5,0)--(118/290,2/5)--(119/290,1/5)--(121/290,3/5)--(123/290,1/5)--(125/290,3/5)--(126/290,2/5)--(128/290,4/5)--(130/290,2/5)--(131/290,3/5)--(133/290,1/5)--(135/290,3/5)--(136/290,2/5)--(138/290,4/5)--(140/290,2/5)--(142/290,4/5)--(143/290,3/5)--(1/2,1)--(147/290,3/5)--(148/290,4/5)--(150/290,2/5)--(152/290,4/5)--(154/290,2/5)--(155/290,3/5)--(157/290,1/5)--(159/290,3/5)--(160/290,2/5)--(162/290,4/5)--(164/290,2/5)--(165/290,3/5)--(167/290,1/5)--(169/290,3/5)--(171/290,1/5)--(172/290,2/5)--(3/5,0)--(176/290,2/5)--(177/290,1/5)--(179/290,3/5)--(181/290,1/5)--(183/290,3/5)--(184/290,2/5)--(186/290,4/5)--(188/290,2/5)--(189/290,3/5)--(191/290,1/5)--(193/290,3/5)--(194/290,2/5)--(196/290,4/5)--(198/290,2/5)--(200/290,4/5)--(201/290,3/5)--(7/10,1)--(86/120,3/5)--(87/120,4/5)--(89/120,2/5)--(91/120,4/5)--(93/120,2/5)--(94/120,3/5)--(4/5,1/5)--(98/120,3/5)--(99/120,2/5)--(101/120,4/5)--(103/120,2/5)--(105/120,4/5)--(106/120,3/5)--(108/120,1)--(45/50,1)--(47/50,3/5)--(48/50,4/5)--(1,2/5);
	\end{tikzpicture}
	\caption{This map captures the essence of the construction of map $g$ from Proposition 5 of \cite{MT}.}\label{fig:MTmap}
\end{figure}
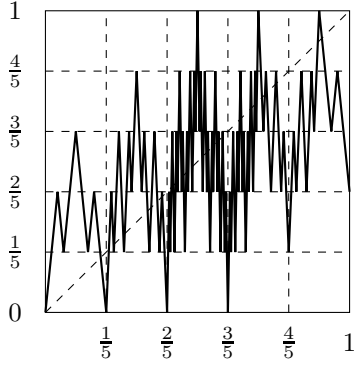 

\begin{obs}\label{obs:gammaapart}
Let $x_j\in I_j$ be the minimal number such that
 $$\lambda_{n,k}(x_j)=\max\{\lambda_{n,k}(x); x\in I_j\}$$  for all  $j\in \{1,\ldots,k+\frac{n-1}{2}\}$. 
Then
  $|x_{j-1}-x_{j}|=\frac{1}{n+k-1}$  and $|\lambda_{n,k}(x_{j-1})-\lambda_{n,k}(x_{j})|=\frac{1}{n+k-1}$ for all $j\in \{1,\ldots, k+\frac{n-1}{2}\}$.
\end{obs}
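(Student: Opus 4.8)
The plan is to locate, block by block, the unique maximiser of the building block $\hat\lambda_{n,k}$ on each interval $[i,i+1]$, and then transport this through the affine rescaling and the flip of Definition~\ref{def:lambda}. \textbf{Step 1 (the block maximum of $\hat\lambda_{n,k}$).} First I would record the images of the three constituent pieces in \eqref{eq:lambdahat}: from $\sigma_n(I)=I$, $\sigma_{-(n-1)}^R([1/2,1])=[0,\tfrac{n-2}{n-1}]$ and $\sigma_{-(n-1)}^L([0,1/2])=[\tfrac1{n-1},1]$ together with the vertical shifts, the first piece of $\hat\lambda_{n,k}|_{[i,i+1]}$ has image $[\tfrac in,\tfrac{i+n-2}{n}]$, the middle piece has image $[\tfrac in,\tfrac{i+n}{n}]$, and the third piece has image $[\tfrac{i+2}{n},\tfrac{i+n}{n}]$; moreover the middle and third pieces meet at $t=i+1-\eta$, where $\hat\lambda_{n,k}$ already equals the block maximum $\tfrac{i+n}{n}$. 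To see that $\tfrac{i+n}{n}$ is attained \emph{only} there I would first establish two elementary facts about the simple crooked maps by a short induction along the recursion $\sigma_m=\phi[\sigma_{m-2},\sigma_{m-1},s_m,m]$ (base case $\sigma_1=\sigma_2=\id$): $\sigma_m$ attains its maximum $1$ only at $t=1$ and its minimum $0$ only at $t=0$ --- in the inductive step the middle $\phi$-piece has image in $[\tfrac1m,\tfrac{m-1}{m}]$, away from $0$ and $1$, while the outer pieces are increasing affine reparametrisations of $\sigma_{m-1}$ whose extrema, by the inductive hypothesis, sit at the prescribed endpoints. Plugging this in, the middle piece of $\hat\lambda_{n,k}$ reaches $\tfrac{i+n}{n}$ only where its $\sigma_n$-argument is $1$, i.e.\ only at $t=i+1-\eta$; the third piece reaches $\tfrac{i+n}{n}$ only where its $\sigma_{-(n-1)}^L$-argument (equivalently the underlying $\sigma_{n-1}$-argument) is $0$, again only at $t=i+1-\eta$; and the first piece never reaches $\tfrac{i+n}{n}$. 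Hence $\hat\lambda_{n,k}|_{[i,i+1]}$ attains its maximum $\tfrac{i+n}{n}$ at the single point $t=i+1-\eta$, and symmetrically its minimum $\tfrac in$ only at $t=i+\eta$.

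\textbf{Step 2 (transfer to $\lambda_{n,k}$).} Fix $j\in\{1,\ldots,k+\tfrac{n-1}{2}\}$, put $i=j-1$, and for $t\in I_j$ write $u=(n+k-1)t\in[i,i+1]$, so that by \eqref{eq:lambda} one has $\lambda_{n,k}(t)=\Fl(L(\hat\lambda_{n,k}(u)))$ with $L(y)=\tfrac{n}{n+k-1}y-\tfrac{n-1}{2(n+k-1)}$ increasing and affine. By Step~1 the continuous function $L\circ\hat\lambda_{n,k}$ sweeps out on $[i,i+1]$ exactly the interval $J_j:=[\tfrac{2j-n-1}{2(n+k-1)},\tfrac{2j+n-1}{2(n+k-1)}]$, whose right endpoint never exceeds $1$, so on $I_j$ the map $\Fl$ reduces to $s\mapsto s$ for $s\ge0$ and $s\mapsto -s$ for $s<0$. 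If $j\ge\tfrac{n+1}{2}$ then $J_j\subseteq[0,1]$, $\Fl$ is the identity there, and $\lambda_{n,k}|_{I_j}$ is an increasing affine function of $\hat\lambda_{n,k}$; so it attains its maximum exactly at the maximiser of $\hat\lambda_{n,k}$, namely $t=\tfrac{j-\eta}{n+k-1}$, with value $L(\tfrac{i+n}{n})=\tfrac{2j+n-1}{2(n+k-1)}$. If $1\le j\le\tfrac{n-1}{2}$ then $J_j$ straddles $0$, and for $y\in J_j$ we have $\Fl(y)\le\max(\tfrac{2j+n-1}{2(n+k-1)},\tfrac{n+1-2j}{2(n+k-1)})=\tfrac{2j+n-1}{2(n+k-1)}$ (since $j\ge1$), with equality only at $y=\max J_j$, which is attained only at the maximiser of $\hat\lambda_{n,k}$; so again $\lambda_{n,k}|_{I_j}$ is maximal only at $t=\tfrac{j-\eta}{n+k-1}$, with the same value. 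In both cases the maximiser is unique, hence the minimal one, so $x_j=\tfrac{j-\eta}{n+k-1}$ and $\lambda_{n,k}(x_j)=\tfrac{2j+n-1}{2(n+k-1)}$.

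\textbf{Step 3 (conclusion) and the main difficulty.} The two asserted equalities now follow by subtraction for consecutive indices: $|x_{j-1}-x_j|=\tfrac1{n+k-1}$ and $|\lambda_{n,k}(x_{j-1})-\lambda_{n,k}(x_j)|=\tfrac{2}{2(n+k-1)}=\tfrac1{n+k-1}$. (As a cross-check, the value $\lambda_{n,k}(x_j)=\tfrac{2j+n-1}{2(n+k-1)}$ can also be obtained from the diameter formulas of Observation~\ref{obs:lambda2}(\ref{obs:lambda2(4)})--(\ref{obs:lambda2(5)}), combined with $\lambda_{n,k}(\tfrac{j}{n+k-1})=\tfrac{j}{n+k-1}$ from Observation~\ref{obs:lambda1}(\ref{obs:lambda1(3)}) and the central symmetry of $\lambda_{n,k}|_{I_j}$ from Observation~\ref{obs:lambda2}(\ref{obs:lambda2(1)}), which forces the image $\lambda_{n,k}(I_j)$ to be centred at $\tfrac{2j-1}{2(n+k-1)}$.) I expect the only genuinely delicate point to be Step~1: being sure that $\hat\lambda_{n,k}|_{[i,i+1]}$ touches its top value at just the single shared endpoint $i+1-\eta$ of its middle and third pieces, and, in the flipped range $j\le\tfrac{n-1}{2}$, that no non-extremal value of $\hat\lambda_{n,k}$ is folded by $\Fl$ above $\tfrac{2j+n-1}{2(n+k-1)}$ --- both reduce cleanly to the two induction lemmas that $\sigma_m$ realises its extrema only at the endpoints of $I$.
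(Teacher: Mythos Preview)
Your proof is correct. The paper states this result as an Observation without supplying any argument, so there is no proof in the paper to compare against; you have filled in the details cleanly. The key ingredient you isolate --- that $\sigma_m$ attains $0$ and $1$ only at the endpoints of $I$, proved by a short induction on the recursion $\sigma_m=\phi[\sigma_{m-2},\sigma_{m-1},s_m,m]$ --- is exactly what is needed to pin down the unique maximiser of $\hat\lambda_{n,k}|_{[i,i+1]}$ at $t=i+1-\eta$, and the transfer through the affine map $L$ and the flip $\Fl$ in Step~2 is handled carefully, including the boundary range $1\le j\le\tfrac{n-1}{2}$ where one must check that the reflected negative values do not overtake the positive maximum.
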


{
The result below is an analogue of Proposition 5 in \cite{MT}. Note that in conditions \eqref{Lcr:1} and \eqref{Lcr:3} below we can put larger number $3\gamma$ in place of $\gamma$ and the conclusions still hold. Therefore, our statements are exact analogues of Proposition 5 in \cite{MT}, with $\gamma$ there playing the role of $3\gamma$ in Lemma~\ref{lem:MincUpdt}.}
\begin{lem}\label{lem:MincUpdt}
	Let $\lambda_{n,k}$ be defined as in Definition~\ref{def:lambda}. Set $\eps:=\frac{n-1}{n+k-1}$, $\gamma:=\frac{1}{n+k-1}$. Then the following statements hold  for every odd integer $n\geq 7$ and $k\geq 1$:
	\begin{enumerate}[(i)]
		\item\label{Lcr:1} $\rho(\lambda_{n,k},\mathrm{id})<\eps/2+\gamma$,
		\item\label{Lcr:2} for every $a$ and $b$ such that $|a - b| < \eps$, $\lambda_{n,k}$ is $3\gamma$-crooked between $a$ and $b$,
		\item\label{Lcr:3} for each subinterval $A$ of $I$ we have $\diam(\lambda_{n,k}(A)) \geq \diam(A)$,
	and if, additionally, $\diam(A)> \gamma$, then
	\begin{enumerate}[(a)]
		\item\label{Lcr:4} $\diam(\lambda_{n,k}(A)) > \eps/2$, 
		\item\label{Lcr:5} $A \subset \lambda_{n,k}(A)$ and
		\item\label{Lcr:6} $\lambda_{n,k}(B) \subset B(\lambda_{n,k}(A),r + \gamma)$ for each non-negative real number $r$ and each set $B \subset B(A,r)$.
	\end{enumerate}	
	\end{enumerate}
\end{lem}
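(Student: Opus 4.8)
The plan is to verify (\ref{Lcr:1})--(\ref{Lcr:6}) one clause at a time, reading everything off the defining formula (\ref{eq:lambda}), in the spirit of the proof of Proposition~5 in \cite{MT}; the building blocks used there are replaced here by Observations~\ref{obs:lambdahat},~\ref{obs:lambda1},~\ref{obs:lambda2},~\ref{obs:gammaapart} and Lemma~\ref{lem:hatcrooked}. Write $\mu_{n,k}$ for the ``pre-flip'' map $t\mapsto\frac{n}{n+k-1}\hat{\lambda}_{n,k}\big((n+k-1)t\big)-\frac{n-1}{2(n+k-1)}$, so that $\lambda_{n,k}=\Fl\circ\mu_{n,k}$, and note two elementary facts used throughout: $\Fl$ is $1$-Lipschitz and restricts to the identity on $I$; and an affine change of coordinates $t\mapsto\alpha f(\beta t)+c$ with $\alpha,\beta>0$ turns $\delta$-crookedness of $f$ between $a$ and $b$ into $\alpha\delta$-crookedness between $\alpha a+c$ and $\alpha b+c$. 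The one preliminary computation: inspecting the three cases of (\ref{eq:lambdahat}) shows $\hat{\lambda}_{n,k}([i,i+1])=[\tfrac in,\tfrac{i+n}n]$ for each admissible $i$, whence
\[
  \mu_{n,k}(I_j)=\Big[(j-1)\gamma-\tfrac{\eps}{2},\ j\gamma+\tfrac{\eps}{2}\Big]\qquad\text{for every }j ,
\]
and a direct check gives $\mu_{n,k}(j\gamma)=j\gamma$ at every grid point (in agreement with Observation~\ref{obs:lambda1}(\ref{obs:lambda1(3)})). These facts, together with $\scr[n]+\scr[n-1]\ge 2$, underlie all quantitative estimates.

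\textbf{Clauses (\ref{Lcr:1}) and (\ref{Lcr:2}).} For (\ref{Lcr:1}): for $t\in I$, $|\lambda_{n,k}(t)-t|=|\Fl(\mu_{n,k}(t))-\Fl(t)|\le|\mu_{n,k}(t)-t|$, and for $t\in I_j$ the display gives $|\mu_{n,k}(t)-t|\le\tfrac{\eps}{2}+\gamma$; this is strict because $\eps/2>0$ puts the (fixed) endpoints of $I_j$ into the interior of $\mu_{n,k}(I_j)$, so the extrema of $\mu_{n,k}$ over $I_j$ are not attained on $\partial I_j$. For (\ref{Lcr:2}): by Lemma~\ref{lem:hatcrooked} and the rescaling remark, $\mu_{n,k}$ is $3\gamma$-crooked between any two of its values lying at distance $<\eps$. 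If $a,b\in[\tfrac{\eps}{2},1-\tfrac{\eps}{2}]$ this transfers verbatim to $\lambda_{n,k}$: the $\lambda_{n,k}$-preimages of such $a,b$, and the auxiliary points $c',d'$ produced by crookedness of $\mu_{n,k}$, all have $\mu_{n,k}$-values in $I$ (here one uses $3\gamma\le\eps/2$, valid for $n\ge7$), so $\Fl$ leaves them unchanged. If instead $a$ lies in a folding zone, say $a\in[0,\tfrac{\eps}{2})$, then $b<\tfrac{3\eps}{2}$ and the display confines every $\lambda_{n,k}$-preimage of $a$ or $b$ to a neighbourhood of $0$ on which $\lambda_{n,k}$ equals $\mu_{n,k}$ off $\mu_{n,k}^{-1}\big([-\tfrac{\eps}{2},0)\big)$ and $-\mu_{n,k}$ on it; since reflection in $\{y=0\}$ preserves $3\gamma$-crookedness, a direct inspection there gives the claim, the mirror case $b\in(1-\tfrac{\eps}{2},1]$ following from Observation~\ref{obs:lambda1}(\ref{obs:lambda1(2)}). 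I expect this handling of $\Fl$ inside (\ref{Lcr:2}) to be the first delicate point.

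\textbf{Clauses (\ref{Lcr:3})--(\ref{Lcr:6}): the technical heart, closest to \cite{MT}.} The inequality $\diam(\lambda_{n,k}(A))\ge\diam(A)$ is immediate when $A$ lies in one interval of monotonicity of $\lambda_{n,k}$, since there $|\lambda_{n,k}'|=\scr[n]+\scr[n-1]\ge2$ (Observation~\ref{obs:lambda1}(\ref{obs:lambda1(1)})); when $A$ meets several, it contains a breakpoint $c$, and restricting to the parts $[p',c],[c,q']$ of $A$ inside the two monotone pieces adjacent to $c$ gives $\diam(\lambda_{n,k}(A))\ge(\scr[n]+\scr[n-1])\tfrac12\diam([p',q'])\ge\diam([p',q'])$ — which already settles the case of a single breakpoint in $A$ (then $[p',q']=A$), and in general is upgraded to the full claim using that the local maxima $x_j$ of $\lambda_{n,k}$ are spaced $\gamma$ apart both horizontally and vertically (Observation~\ref{obs:gammaapart}) and that $\lambda_{n,k}$ fixes the $\gamma$-grid: these force $\lambda_{n,k}(A)$ to contain a grid-aligned interval of length $\ge\diam(A)$. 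The same two facts yield (\ref{Lcr:4}) (if $\diam(A)>\gamma$ then $A$ straddles a grid point $m\gamma$ and contains, on one side, a whole ``$\sigma^{R}_{-(n-1)}$'' or ``$\sigma^{L}_{-(n-1)}$'' sub-block of some $I_j$, whose image has diameter $(n-2)\gamma>\eps/2$) and (\ref{Lcr:5}) (that same image, after the at most one fold that can intervene, together with $\lambda_{n,k}(I_j)\supseteq I_j$ for the full grid intervals inside $A$, already covers $A$, e.g. it contains $[(m-1)\gamma,(m+1)\gamma]$ around the straddled grid point). Finally (\ref{Lcr:6}) asserts that passing from $A$ to $B(A,r)$ enlarges the image by at most $r+\gamma$: consecutive $\mu_{n,k}(I_j)$ overlap in an interval of length $\eps=(n-1)\gamma$, so each extra grid interval enlarges $\mu_{n,k}(A)$ by only $\gamma$; hence $B(A,r)$, adding at most $\lceil r/\gamma\rceil$ grid steps per side, enlarges it by at most $\lceil r/\gamma\rceil\gamma\le r+\gamma$, and composing with the $1$-Lipschitz $\Fl$ only shrinks distances — here the hypothesis $\diam(A)>\gamma$ is exactly what ``anchors'' $\lambda_{n,k}(A)$ at a grid point so that crookedness (Lemma~\ref{lem:hatcrooked}), rather than the large slope, controls the growth. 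Making the bookkeeping in (\ref{Lcr:6}), and the ``upgrade'' step in the first inequality, fully rigorous is the second (and, I expect, harder) delicate point; by the remark preceding the lemma our statements are the exact analogues of \cite[Proposition~5]{MT} with $3\gamma$ for their $\gamma$, so the argument of \cite{MT} applies once the correspondence recorded in the preliminary computation is in place.
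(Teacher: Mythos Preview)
Your approach is essentially the paper's own: introduce the pre-flip map (the paper writes $\tilde{\lambda}_{n,k}$ for your $\mu_{n,k}$), derive (\ref{Lcr:2}) from Lemma~\ref{lem:hatcrooked} via rescaling and then pass through $\Fl$, and read off (\ref{Lcr:3})--(\ref{Lcr:6}) from the grid structure $\mu_{n,k}(I_j)=[(j-1)\gamma-\eps/2,\,j\gamma+\eps/2]$ together with Observation~\ref{obs:gammaapart}. The paper is in fact terser than you in (\ref{Lcr:2}) (it simply says $\Fl$ only decreases distances between values), and for (\ref{Lcr:6}) it argues, as you do, by counting how many of the evenly spaced maximum points $x_j$ fall into $B(A,r)\setminus A$.

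One concrete point to fix in (\ref{Lcr:4}): your claimed diameter $(n-2)\gamma$ for the image of a single $\sigma^{R/L}_{-(n-1)}$ sub-block is the \emph{pre-flip} value and does not survive $\Fl$ near the boundary. For instance, the $\sigma^{R}_{-(n-1)}$ block of $I_2$ has $\mu_{n,k}$-image $[\tfrac{3-n}{2}\gamma,\tfrac{n-1}{2}\gamma]$, which $\Fl$ folds to $[0,\eps/2]$, of diameter exactly $\eps/2$; so if $A=[0.9\gamma,2\gamma]$ your argument identifies precisely this sub-block and fails to give a strict inequality. The paper sidesteps this by using instead that when $\diam(A)>\gamma$ one finds $x,y\in A$ realising the max of $\mu_{n,k}$ on some $I_j$ and the min on an adjacent $I_{j'}$, yielding a pre-flip span $\ge(n+1)\gamma$; since $\Fl$ at most halves the diameter of an interval, this gives $\diam(\lambda_{n,k}(A))\ge\tfrac{n+1}{2}\gamma>\eps/2$.
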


\begin{proof}
\eqref{Lcr:1} From the construction of the map $\lambda_{n,k}$ it follows that $$|t-\lambda_{n,k}(t)|<\frac{n+1}{2}\frac{1}{n+k-1}=\frac{n-1}{2(n+k-1)}+\frac{1}{n+k-1}=\eps/2+\gamma.$$

\eqref{Lcr:2} Now let us prove that for every $a,b\in I$ so that $|a-b|<\eps$ it follows that $\lambda_{n,k}$ is $3\gamma$-crooked between $a$ and $b$. First let us consider the map $\tilde{\lambda}_{n,k}: I\to [-\frac{n-1}{2(n+k-1)},1+\frac{n-1}{2(n+k-1)}]$ defined for every odd integer $n\geq 7$ and every integer $k\geq 1$ by
$$
\tilde{\lambda}_{n,k}(t):=\frac{n}{n+k-1}\hat{\lambda}_{n,k}(t(n+k-1))-\frac{n-1}{2(n+k-1)} \text{ for } t\in I.
$$
Note that one obtains $\lambda_{n,k}$ from $\tilde{\lambda}_{n,k}$ applying the flip function from Definition~\ref{def:flip}. Applying proper rescaling factor it follows from Lemma~\ref{lem:hatcrooked} that for every $a,b\in [-\frac{n-1}{ 2(n+k-1)},1+\frac{n-1}{2(n+k-1)}]$ so that $|a-b|<\eps$ we have that the map $\tilde{\lambda}_{n,k}$ is $3\gamma$-crooked between $a$ and $b$. However, since one obtains $\lambda_{n,k}$ from $\tilde{\lambda}_{n,k}$ and the flip function by the definition at most decreases the distances between the function values, the claim follows immediately. 

\eqref{Lcr:3} If $A\subset I$ is a subinterval so that $\diam(A)< \frac{2}{(\scr[n]+\scr[n-1])(n+k-1)}$, then by Observation~\ref{obs:lambda1} (\ref{obs:lambda1(1)}) it holds that $\diam(\lambda_{n,k}(A))> (\scr[n]+\scr[n-1])\frac{\diam(A)}{2}>\diam(A)$. If a subinterval $A\subset I$ is such that $\gamma>\diam(A)\geq \frac{2}{(\scr[n]+\scr[n-1])(n+k-1)}$, then it follows that $\diam(\lambda_{n,k}(A))\geq \gamma>\diam(A)$ because $A$ contains at least one full interval of monotonicity of the diameter of image being $\gamma$. Now assume that $\gamma\leq \diam (A)\leq 2\gamma$.  Then there are $x,y\in A$ such that $\lambda_{n,k}(y)=\max \lambda_{n,k} (I_j)$ and $\lambda_{n,k}(x)=\min \lambda_{n,k} (I_{j'})$ where $|j-j'|\leq 1$ and $A\cap I_j\neq \emptyset$,  $A\cap I_{j'}\neq \emptyset$. Note that $A$ is contained in at most three different intervals $I_i$ and $\lambda_{n,k}(I_i)$ covers itself and two neighbouring intervals on each side, provided it is not interval containing endpoints $0$ or $1$ (and thus such neighbouring intervals indeed exist, see Figure~\ref{fig:lambda}). This means that $A\subset [\lambda_{n,k}(x),\lambda_{n,k}(y)]\subset \lambda_{n,k}(A)$, in particular $\diam(\lambda_{n,k}(A))\geq \diam(A)$. If $A$ contains $0$ or $1$ it follows from the construction of $\lambda_{n,k}$ that $A\subset \lambda_{n,k}(A)$ and thus subsequently $\diam(\lambda_{n,k}(A))\geq \diam(A)$; even more, $\diam(\lambda_{n,k}(A))\geq \frac{n+1}{2(n+k-1)}$.
When $\diam(A)>2\gamma$, then there are $j<j'$ such that $I_j\cup \ldots \cup I_{j'}\subset A$ and $\diam(I_j\cup \ldots \cup I_{j'})$ is maximal possible for the interval $I_j\cup \ldots \cup I_{j'}$ under inclusion (meaning one cannot take smaller $j$ or larger $j'$). But then clearly $A\subset [\min\lambda_{n,k}(I_j), \max \lambda_{n,k}(I_{j'})]$ (see Figure~\ref{fig:lambda}) which completes the proof. The claim \eqref{Lcr:5} is a consequence of the proof of \eqref{Lcr:3}.  

\eqref{Lcr:4} This part follows from the arguments in the last paragraph if one notes that $\eps/2=\frac{n-1}{2(n+k-1)}$. Namely, $|\tilde{\lambda}_{n,k}(x)-\tilde{\lambda}_{n,k}(y)|\geq\frac{n+1}{n+k-1}$ where $x$ and $y$ are as in the previous paragraph. After applying the flip function we obtain $|\lambda_{n,k}(x)-\lambda_{n,k}(y)|\geq\frac{n+1}{2(n+k-1)}$, see Figure~\ref{fig:lambda}.

\eqref{Lcr:6} For this part Observation~\ref{obs:gammaapart} is crucial. Namely, in intervals $I_j$ the first maximal value of $\lambda_{n,k}$ lies $\gamma$ apart for all $j\in\{1,\ldots, k+\frac{n-1}{2}\}$  and it is exactly $\gamma$ greater from the maximal value of $\lambda_{n,k}(I_{j-1})$ for all $j\in\{2,\ldots, k+\frac{n-1}{2}\}$ and we will use this fact heavily.
Furthermore, if we denote $\alpha:=\frac{\gamma}{\scr[n]+\scr[n-1]}$ that is $\alpha$ is the length of the smallest interval of monotonicity, then $\lambda_{n,k}(x_i)=\max \lambda_{n,k}([x_i,x_{i+1}-\alpha])$
for each $i\in \{1,\ldots,k+\frac{n-1}{2}-1\}$, where $x_i$ are as in Observation~\ref{obs:gammaapart}. Thus for the part when $\gamma \leq \diam(A)<2\gamma$ the conclusion follows from Observation~\ref{obs:gammaapart}.\\
Let $A\subset I$ be an interval such that $\diam(A)\geq 2\gamma$, say $A:=[a,b]\subset I$. Then there is maximal $j$ such that $I_j\subset A$.
If $j\geq k+\frac{n-1}{2}$ then $\max \lambda_{n,k}(A)=1$ and so $\lambda_{n,k}([a,b+r])=\lambda_{n,k}([a,b])$ where $b+r\leq 1$ for some non-negative real number $r$.
So assume that $j<k+\frac{n-1}{2}$. Next, let $j'$ be the maximal number such that $x_{j'}\in [a,b+r]${, where $x_{j'}$ is as in Observation~\ref{obs:gammaapart}}. If $j'=k+\frac{n-1}{2}$ then $\lambda_{n,k}(x_j')=1$
and clearly $x_{j'}-x_j<r+\gamma$ so $\lambda_{n,k}([a,b+r])\subset B(\lambda_{n,k}([a,b]),r+\gamma)$.
For the last case, fix any $x\in [b,b+r]$. Then, by the repetative structure of building blocks of the graph $\lambda_{n,k}$ (see Figure~\ref{fig:lambda}), there is a non-negative real number $s$ such that $s\gamma <r+\gamma$,
$y:=x-s\gamma \in [a,b]$ and $\lambda_{n,k}(x)\leq \lambda_{n,k}(y)+s\gamma$. But this shows that $\lambda_{n,k}([a,b+r])\subset B(\lambda_{n,k}([a,b]),r+\gamma)$, which concludes the proof.
\end{proof}

 We denote the set of {\em piecewise linear maps that preserve Lebesgue measure $\lambda$} by $\mathrm{PL}_{\lambda}$ and {\em piecewise linear maps that are leo and preserve $\lambda$} by $\mathrm{PL}_{\lambda\mathrm{(leo)}}$. 
If additionally they satisfy Markov property, which means there is a partition $0=a_0<a_1<\ldots<a_n=1$ such that for each $i$ the map $f_{[a_i,a_{i+1}]}$ is monotone and there are $s<t$ such that
$f([a_i,a_{i+1}])=[a_s,a_t]$, then we call them \emph{Markov piecewise linear leo maps that preserve $\lambda$}. The set of all such maps is denoted $\mathrm{PLM}_{\lambda\mathrm{(leo)}}$.

\begin{defn}
	A piecewise linear map $f\in C(I)$ is called \emph{admissible}, if $|f'(t)|\geq 4$ for every $t\in I$ for which $f'(t)$ exists and $f$ is leo.
\end{defn}

Having the appropriate Lebesgue measure-preserving perturbations $\lambda_{n,k}$ from Lemma~\ref{lem:MincUpdt} we now get the following lemma. 

\begin{lem}\label{lem:LemMT}
Let $f:I\to I$ be an admissible map. Let $\eta$ and $\delta$ be two positive real numbers. Then there is an admissible map $F:I\to I$ and there is a positive integer $n$ such that $F^n$ is $\delta$-crooked and $\rho(F,f)<\eta$. Moreover, if $f\in C_{\lambda}(I)$, such $F$ can be also chosen to be in $C_{\lambda}(I)$.
\end{lem}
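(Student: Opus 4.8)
The plan is to follow the strategy of Minc and Transue \cite{MT} but to replace their perturbation map $g$ by our Lebesgue measure-preserving analogue $\lambda_{n,k}$ from Definition~\ref{def:lambda}, whose properties were verified in Lemma~\ref{lem:MincUpdt}. The key point is that Lemma~\ref{lem:MincUpdt} establishes exactly the conditions \eqref{Lcr:1}--\eqref{Lcr:6} (up to the harmless replacement of $\gamma$ by $3\gamma$ in the crookedness bound, as noted before the lemma) that are required as input in Proposition~5 of \cite{MT}. Thus the first step is to fix $n$ large (odd, $\geq 7$) and $k$ large so that $\eps = \tfrac{n-1}{n+k-1}$ and $\gamma = \tfrac{1}{n+k-1}$ are both small compared to $\eta$ and $\delta$; this guarantees via Lemma~\ref{lem:MincUpdt}\eqref{Lcr:1} that $\rho(\lambda_{n,k},\id)$ is small, which we will need to keep the perturbed map close to $f$.

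The second step is the core composition argument. Given the admissible map $f$ and the target $\delta$, one builds $F$ as a suitable rescaled insertion of copies of $\lambda_{n,k}$ into the monotone branches of an iterate of $f$ — more precisely, one conjugates $\lambda_{n,k}$ by the affine maps carrying $I$ onto the lap intervals of $f^m$ for an appropriately chosen $m$, and composes. Because $\lambda_{n,k}$ is $3\gamma$-crooked between any pair $a,b$ with $|a-b|<\eps$ (Lemma~\ref{lem:MincUpdt}\eqref{Lcr:2}) and because, by admissibility, $|(f^m)'|\geq 4^m$ so that each lap of $f^m$ has length $<\eps$ once $m$ is large, the crookedness of the inserted copies propagates: iterating $F$ the requisite number of times produces a map that is $\delta$-crooked between every pair of points. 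The monotonicity-preserving estimates in Lemma~\ref{lem:MincUpdt}\eqref{Lcr:3}--\eqref{Lcr:6} — in particular that $\lambda_{n,k}$ never decreases the diameter of a subinterval and that $A\subset\lambda_{n,k}(A)$ for $\diam(A)>\gamma$ — are what make this propagation go through, exactly as in \cite{MT}: they prevent the crookedness created at one stage from being destroyed by later iterates, and they control how images of small balls spread. The leo and admissibility properties of $F$ are preserved because inserting maps with uniform slope $\pm(\scr[n]+\scr[n-1])$ (Observation~\ref{obs:lambda1}\eqref{obs:lambda1(1)}), a slope much larger than $4$, into the branches of an admissible map keeps all slopes at least $4$ in absolute value and keeps the composite leo.

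The third, and genuinely new, point is the measure-preservation clause. If $f\in C_\lambda(I)$, then since $\lambda_{n,k}\in C_\lambda(I)$ by Proposition~\ref{prop:LebPres}, and since an affine rescaling of a Lebesgue measure-preserving map of $I$ to a subinterval is again (up to the obvious normalization) measure-preserving on that subinterval, the composition used to build $F$ can be arranged to be a concatenation of measure-preserving pieces over a partition into intervals of equal $f^m$-image; by Observation~\ref{l:3} (Equation~\eqref{eq:basic}) this forces $F\in C_\lambda(I)$. Here one must be a little careful that the $f^m$-laps are matched up with full copies of the rescaled $\lambda_{n,k}$ so that the branch-counting identity holds on the nose — this is the same bookkeeping as in the proof of Proposition~\ref{prop:LebPres}, and choosing the parameters $n,k,m$ compatibly with the combinatorics of $f$ is where the care is needed.

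\textbf{Main obstacle.} The delicate part is not any single estimate but the simultaneous bookkeeping: one must choose $n$ (odd, $\geq 7$), $k$, and the iterate $m$ so that (a) $\eps,\gamma$ are small enough for the crookedness to reach $\delta$ after iteration and for $\rho(F,f)<\eta$; (b) the laps of $f^m$ are short enough ($<\eps$) for Lemma~\ref{lem:MincUpdt}\eqref{Lcr:2} to apply to every relevant pair of values; and (c) the rescaled insertions fit the branch structure of $f^m$ exactly, so that the measure-preservation identity \eqref{eq:basic} remains valid for $F$. Verifying that these three requirements can be met at once — essentially repeating the quantitative argument of Proposition~5 of \cite{MT} while tracking the extra constraint coming from Lebesgue measure — is the heart of the proof, and it is what the subsequent pages carry out in detail.
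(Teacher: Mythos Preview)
Your second step takes a wrong turn, and everything after it is fighting a problem the paper never has. The paper does \emph{not} insert rescaled copies of $\lambda_{n,k}$ into the laps of an iterate $f^m$; it simply sets
\[
F := f \circ \lambda_{n,k}
\]
for suitably chosen odd $n\geq 7$ and $k\geq 1$, and then invokes the proof of \cite[Lemma]{MT} verbatim, with $\lambda_{n,k}$ playing the role of their map $g$. Lemma~\ref{lem:MincUpdt} was written precisely so that $\lambda_{n,k}$ satisfies the hypotheses that \cite{MT} impose on $g$, so nothing in the Minc--Transue argument needs to be redone. With this $F$, the ``moreover'' clause is a one-liner: if $f\in C_\lambda(I)$ then $F=f\circ\lambda_{n,k}$ is a composition of two Lebesgue measure-preserving maps and hence lies in $C_\lambda(I)$ by Proposition~\ref{prop:LebPres}. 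There is no lap-matching, no rescaling, and no branch-counting bookkeeping; your ``main obstacle'' is entirely self-inflicted.

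Your account of why crookedness propagates is also off. You want the laps of $f^m$ to be shorter than $\eps$ so that Lemma~\ref{lem:MincUpdt}\eqref{Lcr:2} applies lap by lap. In the actual Minc--Transue argument one instead chooses $\eps,\gamma$ small relative to the maximal slope $s$ of $f$ and to an integer $N$ coming from the leo property (so that sufficiently long subintervals cover $I$ after $N$ iterates; compare the sketch of Lemma~\ref{lem:LemMTadjusted}); iterating $F=f\circ\lambda_{n,k}$ then spreads the local $3\gamma$-crookedness of $\lambda_{n,k}$ across all of $I$, and conditions \eqref{Lcr:3}--\eqref{Lcr:6} control this spreading under iteration. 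The closeness $\rho(F,f)<\eta$ comes directly from $\rho(\lambda_{n,k},\id)<\eps/2+\gamma$ and the Lipschitz bound $s$ on $f$, once $\eps<\eta/s$.
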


\begin{proof}
	We define $F=f\circ \lambda_{n,k}$ and proceed with the proof as in \cite[Lemma]{MT}, since we can replace their map $g$ with $\lambda_{n,k}$ as it follows from Lemma~\ref{lem:MincUpdt}, provided that $n$ and $k$ are properly chosen. As $\lambda_{n,k}$ is a Lebesgue preserving map for all odd $n\ge 7$ and all $k\geq 1$ by Proposition~\ref{prop:LebPres} the moreover part follows by choosing $f\in C_{\lambda}(I)$. 
\end{proof}

The following lemma gives a useful fact about the admissible maps.

\begin{lem}[{\cite[Theorem~10]{KO}}]
	For every $\eps>0$ and every leo map $f \in C(I)$ there exists $F\in C(I)$ such that $F$ is admissible and $\rho(F,f) < \eps$.
\end{lem}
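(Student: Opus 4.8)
The plan is to produce $F$ as a \emph{zigzag thickening} of a suitable piecewise linear leo model of $f$: on every affine branch of the model whose slope has absolute value below $4$ one replaces the branch by a sawtooth of slope $\pm 4$ with the same endpoint values and the same image, leaving the already steep branches untouched. Two competing requirements must be reconciled: uniform closeness to $f$ forces the branch images to be small, hence the underlying partition to be fine, whereas the leo property of $F$ has to be recovered from that of $f$, which is most transparent when the surgery is carried out with respect to a genuine Markov partition.

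First I would reduce to the case that $f$ is piecewise linear, leo, Markov, and refines its own monotonicity intervals, with a partition $0=a_0<\dots<a_m=1$ of arbitrarily small mesh; moreover one may assume $\diam f([a_i,a_{i+1}])<\eps/2$ for each $i$. Such models exist: piecewise linear leo maps are dense among leo maps, topological exactness forbids a nondegenerate interval on which all iterates are monotone (so partitions generated by (pre)images of the turning points have mesh tending to $0$), and one arranges in addition that the turning-point orbits are mutually (pre)periodic, so that a \emph{finite} Markov partition refining the monotonicity intervals is available. Granting this, define $F$ branch by branch: on each $[a_i,a_{i+1}]$ with $|f'|\ge 4$ put $F=f$; on each $[a_i,a_{i+1}]$ with $|f'|<4$, observe that $f([a_i,a_{i+1}])$ is the interval $J_i$ with endpoints $f(a_i),f(a_{i+1})$ (here we use that $f$ is monotone on the branch), and that the ``surplus'' total variation $4(a_{i+1}-a_i)-|f(a_{i+1})-f(a_i)|>0$ lets one insert finitely many wiggles inside $J_i$ to obtain a continuous sawtooth $F|_{[a_i,a_{i+1}]}$ with all slopes $\pm 4$, with $F(a_i)=f(a_i)$, $F(a_{i+1})=f(a_{i+1})$, and $F([a_i,a_{i+1}])=J_i$. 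Then $F\in C(I)$ is piecewise linear with $|F'|\ge 4$ wherever defined, $\rho(F,f)<\eps$ because $F([a_i,a_{i+1}])=f([a_i,a_{i+1}])$ has diameter $<\eps/2$, and $F$ realizes exactly the same Markov transition data as $f$ on $\{[a_i,a_{i+1}]\}$, so $F^n([a_i,a_{i+1}])=f^n([a_i,a_{i+1}])$ for all $i,n$; since $f$ is leo there is $N$ with $F^N([a_i,a_{i+1}])=I$ for every $i$.

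It then remains to check that $F$ itself is leo, hence admissible. Since $F^N$ sends every partition element onto $I$, it suffices to show that for each nondegenerate interval $K$ some forward image $F^j(K)$ contains a partition element $[a_i,a_{i+1}]$, for then $F^{j+N}(K)=I$; this is the usual argument for piecewise expanding maps with a primitive transition matrix (the matrix here is primitive because it is the one of the leo map $f$), using $|F'|\ge 4$ together with a short bookkeeping that tracks the grid-aligned portion of $F^j(K)$ and bounds the loss at each step by a fixed multiple of the mesh. I expect the main obstacle to be the reduction step rather than the zigzag surgery: producing, uniformly close to an arbitrary leo map, a piecewise linear leo map that carries a \emph{finite} Markov partition of small mesh refining its monotonicity intervals (equivalently, perturbing so that the turning-point orbits become mutually (pre)periodic) is the delicate part; once such a model is in hand, everything else is essentially combinatorial.
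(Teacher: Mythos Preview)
This lemma is not proved in the paper; it is quoted from \cite{KO}. The paper does, however, prove the $C_\lambda(I)$ analogue (Lemma~\ref{lem:LeoApproxByAdm}), and your proposal is essentially the same strategy used there: pass to a piecewise linear leo Markov approximation with a fine partition $0=a_0<\dots<a_m=1$ refining the monotonicity intervals, then replace each branch by a zigzag so that all slopes are at least $4$ while $F([a_i,a_{i+1}])=f([a_i,a_{i+1}])$ and $\rho(F,f)<\eps$; finally recover leo from the fact that $F$ and $f$ have the same Markov transitions on the partition. The reduction step that you flag as delicate is exactly the step the paper outsources as well (to Lemma~\ref{lem:BT} in the $C_\lambda$ setting, i.e.\ density of $\mathrm{PLM}_{\lambda(\mathrm{leo})}$; for $C(I)$ this is the content of \cite{KO}).

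One point where your version is slightly weaker than the paper's: the paper uses regular $m$-fold window perturbations rather than a generic slope-$4$ sawtooth. The advantage is that every \emph{individual} monotone piece of $F$ inside $[a_i,a_{i+1}]$ then maps onto all of $J_i=[a_k,a_{k'}]$, so once an iterate $F^M(U)$ contains a single full monotone piece one immediately has $F^{M+1}(U)\supset[a_k,a_{k'}]$ and the leo argument closes. With an arbitrary slope-$\pm4$ sawtooth matching only the endpoints and the total image $J_i$, the internal monotone pieces need not cover $J_i$ nor any partition element, and then your ``short bookkeeping that tracks the grid-aligned portion'' is not quite enough as stated: containing three critical points of $F$ does not by itself force $F^{j+1}(K)$ to contain a partition cell. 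This is not a real obstruction---either allow slopes $\ge4$ and design the sawtooth so that each full lap surjects onto $J_i$, or simply use the $m$-fold window perturbation---but it is worth tightening.
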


We will need its small adjustment, which can be obtained with the help of the following useful result.
\begin{lem}[{\cite[Proposition~12]{MT}}]\label{lem:BT}
	The set $\mathrm{PLM}_{\lambda\mathrm{(leo)}}$ is dense in $C_{\lambda}(I)$.
\end{lem}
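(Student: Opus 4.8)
The plan is to reach a map in $\mathrm{PLM}_{\lambda\mathrm{(leo)}}$ within $\rho$-distance $\varepsilon$ of a given $f\in C_{\lambda}(I)$ by a short chain of arbitrarily small perturbations, each of which keeps Lebesgue measure invariant and each of which installs one of the three required features — piecewise linearity, the leo property, and the Markov property — without destroying the features already obtained. Throughout, the bookkeeping device is Observation~\ref{l:3}: a piecewise monotone map preserves $\lambda$ precisely when, over every value, the reciprocal absolute slopes of the preimage branches sum to $1$.

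\emph{Piecewise linear approximation.} First I would produce $g_1\in\mathrm{PL}_{\lambda}$ with $\rho(f,g_1)$ small. Fixing a sufficiently fine equipartition $0=a_0<\dots<a_m=1$ of the domain, one replaces $f$ on each $[a_i,a_{i+1}]$ by a piecewise affine ``zigzag'' running through the extremes of $f$ on that block, choosing the number of teeth and their slopes so that the balance identity of Observation~\ref{l:3} holds value by value; this forces $\lambda$-invariance, keeps $\rho(f,g_1)$ of order $1/m$, and, for $m$ large, allows all slopes to be at least $4$ in modulus. (Equivalently one may invoke density of $\mathrm{PL}_{\lambda}$ in $C_{\lambda}(I)$ and then thin the laps.)

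\emph{Upgrading to leo.} Next I would apply the window-perturbation technique of \cite{BT} — the same device used in \cite{BCOT,BCOT1} and in Section~\ref{sec:crooked}: on finitely many pairwise disjoint short monotone pieces of $g_1$ one substitutes sawtooth maps that oscillate many times but remain inside the (short) images of those pieces. By exactly the computation underlying Observation~\ref{l:3} such a substitution preserves $\lambda$, it is piecewise affine, it keeps $\rho$ small, and — with the windows chosen so that the union of the forward orbits of their images is all of $I$ — it forces some iterate of the resulting map $g_2$ to stretch every nondegenerate subinterval onto $I$. Keeping the sawtooth slopes large, $g_2\in\mathrm{PL}_{\lambda\mathrm{(leo)}}$ and $g_2$ stays admissible.

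\emph{Installing the Markov property, and the main obstacle.} The breakpoints of $g_2$ form a finite set $F$, but $g_2(F)$ need not lie in $F$ and the forward orbit of $F$ may fail to be finite, so the partition does not close up in the Markov sense; this last step is where the real work lies. \emph{After} $g_2$ and its expansion constant are fixed, I would pick a very fine grid $\{p/P:0\le p\le P\}$ and perform one more $\lambda$-preserving piecewise affine perturbation that moves each breakpoint to a grid point and re-tilts the finitely many affine laps so that every monotone lap of the perturbed map $g_3$ maps exactly onto a union of grid intervals; since $g_2$ is uniformly expanding, the leo property is an open condition inside this family and survives the (tiny) perturbation, giving $g_3\in\mathrm{PLM}_{\lambda\mathrm{(leo)}}$ with $\rho(f,g_3)<\varepsilon$. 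The delicate point is precisely this: sliding one breakpoint changes the slopes of its two adjacent pieces and so breaks the value-by-value balance of Observation~\ref{l:3}, hence the adjustment cannot be made one breakpoint at a time — the slopes must be re-balanced globally and consistently with the chosen combinatorics (which grid interval each lap covers), while the total displacement is kept below $\varepsilon$ and no invariant proper subinterval is re-created. Choosing $P$ large enough that a single grid can absorb the images of all breakpoints of $g_2$ at once is what makes this simultaneous balancing feasible.
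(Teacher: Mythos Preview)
The paper does not give its own proof of this statement: Lemma~\ref{lem:BT} is stated as a citation with no accompanying argument. (Incidentally, the displayed attribution to \cite{MT} is almost certainly a typo --- the label \texttt{lem:BT} and the content point to Bobok--Troubetzkoy \cite{BT}, where a Proposition~12 of this type actually appears; the Minc--Transue paper \cite{MT} is four pages long and contains no such proposition.) So there is nothing in the present paper to compare your argument against.

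On its own merits: your first two steps are reasonable and follow the standard toolkit of \cite{BT,BCOT}. The third step, however, contains a genuine gap. You correctly isolate the obstacle --- pushing a breakpoint to a grid point disturbs two adjacent slopes and hence the balance identity of Observation~\ref{l:3}, so the adjustments cannot be decoupled --- but you then simply assert that ``choosing $P$ large enough \dots\ makes this simultaneous balancing feasible'' without supplying any mechanism. This is precisely the substantive content of the lemma: one must exhibit, given the combinatorial data (which grid interval each lap should cover), a piecewise affine $\lambda$-preserving map realizing that data and lying within $\varepsilon$ of $g_2$. A large $P$ gives you many degrees of freedom, but it also multiplies the number of constraints from Observation~\ref{l:3}, and nothing in your sketch explains why the resulting system is solvable. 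Nor is it clear that the leo property, which you want to retain by openness, survives: leo is open in $C(I)$, but you are perturbing inside the constrained manifold $\mathrm{PL}_\lambda$ with moving breakpoints, and the perturbation you need is specified only implicitly. The actual proof in \cite{BT} proceeds differently and more directly, building the Markov structure into the approximation from the outset rather than retrofitting it at the end.
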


In what follows we will also need perturbations of maps that preserve Lebesgue measure, similarly as in \cite{BT}.

\begin{defn}
For maps $f,g\colon~[a,b]\subset I\to I$ we say that they are \emph{$\lambda$-equivalent} if for each Borel set $A\in \mathcal{B}(I)$ (where $\mathcal{B}(I)$ from now on denotes \emph{Borel $\sigma$-algebra} on $I$) it holds that,
$$\lambda(f^{-1}(A))=\lambda(g^{-1}(A)).
$$
For $f\in C_{\lambda}(I)$ and $[a,b]\subset I$ we denote by $C(f;[a,b])$ the set of all continuous maps $\lambda$-equivalent to $f|_{[a,b]}$. We
define
$$C_*(f;[a,b]):=\{h\in C(f;[a,b])\colon~h(a)=f(a),~h(b)=f(b)\}.$$
\end{defn}

The following definition is illustrated by Figure~\ref{fig:perturbations}.
 
\begin{defn}\label{eq:perturb} Let $f\in C_{\lambda}(I)$ and $[a,b]\subset I$. For any fixed $m\in\N$, let us define the map $h=h\langle f;[a,b],m\rangle\colon~[a,b]\to I$ by ($j\in\{0,\dots,m-1\}$):
\begin{equation*}
h(a + x) := \begin{cases}
f\left (a+m \Big (x-\frac{j(b-a)}{m}\Big) \right )\text{ if } x\in \left [\frac{j(b-a)}{m},\frac{(j+1)(b-a)}{m} \right ],~j\text{ even}, \\
f\left (a+m \Big (\frac{(j+1)(b-a)}{m}-x \Big ) \right )\text{ if } x\in \left [\frac{j(b-a)}{m},\frac{(j+1)(b-a)}{m} \right ],~j\text{ odd}.
\end{cases}
\end{equation*}
Then $h\langle f;[a,b],m\rangle\in C(f;[a,b])$ for each $m$ and $h\langle f;[a,b],m\rangle\in C_*(f;[a,b])$ for each $m$ odd. In particular, if $h=h\langle f;[a,b],m\rangle$, $m$ odd, we will speak of
\emph{regular $m$-fold window perturbation} $h$ of $f$ (on $[a,b]$).
\end{defn}

\begin{figure}[!ht]
	\centering
	\begin{tikzpicture}[scale=4, rotate=180]
	\draw (0,0)--(0,1)--(1,1)--(1,0)--(0,0);
	\draw[thick] (0,1)--(1/2,0)--(1,1);
	\node at (1/2,1/2) {$f$};
	\node at (7/16,-0.1) {$b$};
	\node at (5/8,-0.1) {$a$};
	\draw[dashed] (7/16,0)--(7/16,1/4)--(5/8,1/4)--(5/8,0);
	\node[circle,fill, inner sep=1] at (7/16,1/8){};
	\node[circle,fill, inner sep=1] at (5/8,1/4){};
	\end{tikzpicture}
	\hspace{1cm}
	\begin{tikzpicture}[scale=4, rotate=180]
	\draw (0,0)--(0,1)--(1,1)--(1,0)--(0,0);
	\draw[thick] (0,1)--(7/16,1/8)--(14/32+1/48,0)--(1/2,1/4)--(1/2+1/24,0)--(9/16,1/8)--(9/16+1/48,0)--(5/8,1/4)--(1,1);
	\draw[dashed] (1/2,0)--(1/2,1/4);
	\draw[dashed] (9/16,1/4)--(9/16,0);
	\node at (1/2,1/2) {$h$};
	\node at (7/16,-0.1) {$b$};
	\node at (5/8,-0.1) {$a$};
	\draw[dashed] (7/16,0)--(7/16,1/4)--(5/8,1/4)--(5/8,0);
	\node[circle,fill, inner sep=1] at (1/2,1/4){};
	\node[circle,fill, inner sep=1] at (9/16,1/8){};
	\node[circle,fill, inner sep=1] at (7/16,1/8){};
	\node[circle,fill, inner sep=1] at (5/8,1/4){};
	\end{tikzpicture}
	\caption{For  $f\in C_{\lambda}(I)$ and $a,b\in I$ shown on the left picture, we show on the right picture
 the regular $3$-fold window perturbation of $f$ by $h=h\langle f;[a,b],3\rangle\in C_*(f;[a,b])$.}\label{fig:perturbations}
\end{figure}
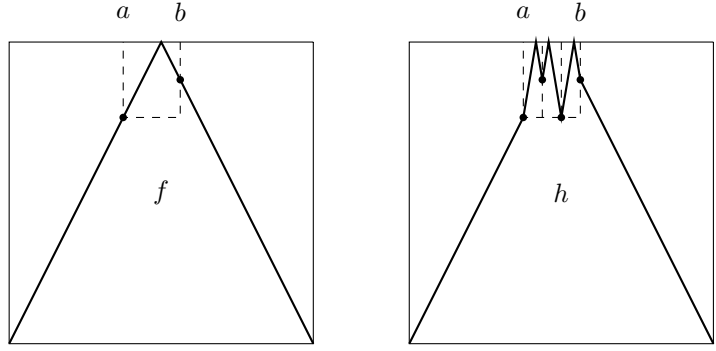

For more details on the perturbations from the previous definition we refer the reader to \cite{BT}.

\begin{lem}\label{lem:LeoApproxByAdm}
	For every $\eps>0$ and every leo map $f \in C_\lambda(I)$ there exists $F\in C_\lambda(I)$ such that $F$ is admissible and $\rho(F,f) < \eps$.
\end{lem}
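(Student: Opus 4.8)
The plan is to start from the piecewise-linear Markov model provided by Lemma~\ref{lem:BT} and then raise all of its slopes to at least $4$ by surgically inserting regular odd-fold window perturbations, which automatically keep us inside $C_\lambda(I)$. So, given a leo map $f\in C_\lambda(I)$ and $\eps>0$, I would first apply Lemma~\ref{lem:BT} to get $g\in\mathrm{PLM}_{\lambda\mathrm{(leo)}}$ with $\rho(f,g)<\eps/2$; let $P$ be a Markov partition for $g$. The branches of $g$ over $P$ may have slopes below $4$ (which we must fix) and may have large images (which would make a window perturbation far from $g$), so the next step is to refine $P$ to kill the second problem. Put $E_k:=\bigcup_{j=0}^{k}g^{-j}(P)$. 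Since $g$ is leo, $\bigcup_k E_k$ is dense: an open interval disjoint from it would have all of its $g$-iterates contained in a single $P$-atom, contradicting $g^n(J)=I$. A compactness argument (the open sets $\{x:\dist(x,E_k)<\eps/4\}$ increase to $I$) then yields a $k$ with $\mathrm{mesh}(E_k)<\eps/2$. Set $P':=E_{k+1}=P\cup g^{-1}(E_k)$. Then $g$ maps every point of $P'$ into $P'$ and is monotone on every $P'$-atom, so $g$ is still Markov with respect to $P'$; moreover $g$ maps each $P'$-atom $Q$ into a single $E_k$-atom, so $\diam(g(Q))<\eps/2$ for all such $Q$.

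Next I would define $F$ atom by atom over $P'$: if $g$ has slope $\ge 4$ on an atom $Q$, set $F|_Q:=g|_Q$; if $g$ has slope $\sigma<4$ on $Q$, set $F|_Q:=h\langle g;Q,m\rangle$, the regular $m$-fold window perturbation, with $m$ the least odd integer satisfying $m\sigma\ge 4$. Since an odd-fold regular window perturbation agrees with $g$ at the endpoints of $Q$, $F$ is a well-defined continuous piecewise-linear selfmap of $I$ with $|F'|\ge 4$ at every point where it is defined. As $h\langle g;Q,m\rangle\in C(g;Q)$ it is $\lambda$-equivalent to $g|_Q$, so summing $\lambda((F|_Q)^{-1}(A))=\lambda((g|_Q)^{-1}(A))$ over the finitely many atoms gives $\lambda(F^{-1}(A))=\lambda(g^{-1}(A))=\lambda(A)$; hence $F\in C_\lambda(I)$. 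Finally, $h\langle g;Q,m\rangle$ has the same range $g(Q)$ as $g|_Q$, so $|F(x)-g(x)|\le\diam(g(Q))<\eps/2$ for every $x$; thus $\rho(F,g)<\eps/2$ and $\rho(F,f)<\eps$.

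The one remaining point — and the main obstacle — is that $F$ is leo, since expansion by itself does not force topological exactness and it has to be inherited from $g$. Let $P''$ be the refinement of $P'$ obtained by cutting each perturbed atom into its $m$ window sub-branches, so that $F$ is monotone affine on each $P''$-atom with $|F'|\ge 4$. First, for any nondegenerate interval $J$ there is $n_0$ with $F^{n_0}(J)\supseteq$ some $P''$-atom: while $F^n(J)$ contains no atom it meets at most two (adjacent) atoms, and then $\diam(F^{n+1}(J))\ge 2\,\diam(F^{n}(J))$, which cannot go on forever. Second, for every $P'$-atom $Q$ one has $F(Q)=g(Q)$ (each window sub-branch of $F|_Q$ maps onto $g(Q)$), and $g(Q)$ is a union of $P'$-atoms by the Markov property, so by induction $F^n(Q)=g^n(Q)$ for all $n$. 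Putting these together: starting from $J$, after $n_0$ steps $F^{n_0}(J)$ contains a $P''$-atom, after one more step it contains a $P'$-atom $Q$, and then $F^{n_0+1+N}(J)\supseteq F^{N}(Q)=g^{N}(Q)=I$ for the $N$ supplied by the leo property of $g$. Hence $F$ is leo, so $F$ is admissible and $\rho(F,f)<\eps$, as required.
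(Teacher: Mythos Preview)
Your proof is correct and follows essentially the same approach as the paper: approximate $f$ by a map $g\in\mathrm{PLM}_{\lambda(\mathrm{leo})}$, refine the Markov partition so that images of atoms have diameter $<\eps/2$, apply odd regular window perturbations on the atoms to force slopes $\ge 4$, and recover the leo property from the Markov structure of $g$ together with the slope bound. The only cosmetic differences are that the paper refines the partition by adjoining full periodic orbits (rather than preimage sets $g^{-j}(P)$) and perturbs every atom (rather than just the slow ones); your expansion argument for eventually covering a $P''$-atom is in fact more carefully written than the paper's.
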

\begin{proof}
By Lemma~\ref{lem:BT} piecewise linear and Markov leo maps are dense in $C_{\lambda}(I)$, so let us start with such map $g$ with $\rho(g,f) < \eps/2$.
Let us choose a Markov partition $0=a_0<a_1<\ldots<a_n=1$ for $g$. Periodic points are dense for a leo map, so including points from periodic orbits as points in the partition, we may also require that $|a_{i+1}-a_i|<\delta$ for any fixed $\delta>0$.
In particular, we have that $g$ is monotone on each interval $[a_i,a_{i+1}]$, $\diam(g([a_i,a_{i+1}]))<\eps/2$ and $g([a_i,a_{i+1}])=[a_k,a_{k'}]$ for some indices $k<k'$.
Now, repeating construction in Lemma~5 of \cite{BT} we construct a new map $F$ by replacing each $g|_{[a_i,a_{i+1}]}$ by its regular $m$-fold window perturbation, with odd
and sufficiently large $m$. This way $F$ is admissible with $\rho(F,g)\leq \diam(g([a_i,a_{i+1}]))<\eps/2$.
Window perturbations are invariant for $C_\lambda(I)$, hence $F\in C_\lambda(I)$.
Clearly also $g([a_i,a_{i+1}])=F([a_i,a_{i+1}])=[a_k,a_{k'}]$. Therefore, for each $i$ there is $n\in \N$ such that $F^n([a_i,a_{i+1}])=I$.
But then, if we fix any open set $U\subset I$, then since slope on intervals of monotonicity of $F$ is at least $4$, there is $M\in \mathbb{N}$ such that $F^M(U)$ contains three consecutive intervals of monotonicity, and therefore 
$F^{M+1}(U)\supset [a_i,a_{i+1}]$.
\end{proof}

The following lemma is an essential ingredient in the construction of pseudo-arc using inverse limits in \cite{MT}.
\begin{lem}[{\cite[Proposition~2]{MT}}]\label{lem:crooked}
	Let $f,F\in C(I)$ be two maps so that
	$\rho(f,F)<\varepsilon$. If $f$ is $\delta$-crooked, then $F$ is
	$(\delta+2\varepsilon)$-crooked.
\end{lem}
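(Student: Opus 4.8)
The statement to prove is Lemma~\ref{lem:crooked}: if $\rho(f,F)<\varepsilon$ and $f$ is $\delta$-crooked, then $F$ is $(\delta+2\varepsilon)$-crooked. This is Proposition~2 of Minc--Transue, so I am reconstructing a known short argument.

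The plan is to unwind both definitions directly. Fix $a,b\in I$ and take any $c,d\in I$ with $F(c)=a$ and $F(d)=b$; I must produce $c'$ between $c$ and $d$ and $d'$ between $c'$ and $d$ with $|b-F(c')|<\delta+2\varepsilon$ and $|a-F(d')|<\delta+2\varepsilon$. The natural move is to pass to the values of $f$ at $c$ and $d$: set $a'\defeq f(c)$ and $b'\defeq f(d)$. Since $\rho(f,F)<\varepsilon$ we have $|a'-a|=|f(c)-F(c)|<\varepsilon$ and similarly $|b'-b|<\varepsilon$. Now $c$ and $d$ are points with $f(c)=a'$ and $f(d)=b'$, so the $\delta$-crookedness of $f$ \emph{between $a'$ and $b'$} applies to exactly this pair $c,d$: it yields $c'$ between $c$ and $d$ and $d'$ between $c'$ and $d$ with $|b'-f(c')|<\delta$ and $|a'-f(d')|<\delta$.

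It remains to convert these estimates from $f$ back to $F$ and from the primed values back to $a,b$, picking up a $2\varepsilon$ error. For $c'$: $|b-F(c')|\le |b-b'|+|b'-f(c')|+|f(c')-F(c')| < \varepsilon+\delta+\varepsilon = \delta+2\varepsilon$, where the first term uses $|b-b'|<\varepsilon$, the second is the crookedness estimate, and the third uses $\rho(f,F)<\varepsilon$. Symmetrically, $|a-F(d')|\le|a-a'|+|a'-f(d')|+|f(d')-F(d')|<\varepsilon+\delta+\varepsilon=\delta+2\varepsilon$. Since $c',d'$ already sit in the required order between $c$ and $d$, this shows $F$ is $(\delta+2\varepsilon)$-crooked between $a$ and $b$; as $a,b$ were arbitrary, $F$ is $(\delta+2\varepsilon)$-crooked.

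There is essentially no obstacle here — the only subtlety, which is worth stating carefully, is the bookkeeping point that one must apply the crookedness hypothesis of $f$ \emph{between the shifted targets $f(c)$ and $f(d)$} rather than between $a$ and $b$; it is precisely this freedom in the definition of $\delta$-crookedness (a condition quantified over \emph{all} pairs $a,b$) that makes the argument go through, and the cost of the shift is accounted for by two of the three $\varepsilon$'s in each triangle inequality.
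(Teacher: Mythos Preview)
Your proof is correct and is exactly the standard argument for this result; the paper does not supply its own proof but simply cites \cite[Proposition~2]{MT}, and what you have written is a faithful reconstruction of that proposition's proof. The only (very minor) remark is that your closing phrase ``two of the three $\varepsilon$'s'' is slightly garbled---each triangle inequality has three terms, two bounded by $\varepsilon$ and one by $\delta$---but the mathematics is entirely sound.
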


Now we are ready to prove the main theorem of this section.

\begin{proof}[Proof of Theorem \ref{thm:UniLimPresLeb}]
For any $k\geq 1$ let the set $A_k\subset C_\lambda(I)$ be contained in the set of maps $f$ such that $f^n$ is $(1/k-\delta)$-crooked for some $n$ and some sufficiently small $\delta>0$. 
First observe that $A_k$ is dense. Namely,  by Lemma~\ref{lem:BT} it holds that piecewise linear leo Markov maps are dense in $C_{\lambda}(I)$.
If we start with such a map $g$ then first applying Lemma~\ref{lem:LeoApproxByAdm} and next Lemma~\ref{lem:LemMT} we modify $g$ to a map $f\in A_k$
by an arbitrarily small perturbation. But if $f\in A_k$ and $n,\delta$ are constants from the definition of $A_k$, then by Lemma~\ref{lem:crooked} we have $B(f,\delta/4)\subset A_k$.
This shows that $A_k$ contains an open dense set. But then the set
$$
\mathcal{T}:=\bigcap_{k=1}^\infty A_k
$$
is a dense $G_\delta$ and clearly each element $f\in \mathcal{T}$ satisfies the conclusion of the theorem. 
\end{proof}

\subsection{Maps with a dense set of periodic points}

Denote by $C_{DP}(I)\subset C(I)$ the family of \emph{interval maps with a dense set of periodic points}. First note that $C_{DP}(I)$ is not a closed space. However, since $\overline{C_{DP}(I)}$ is closed in $C(I)$ it is thus a complete space as well. Now we state a useful remark that is given and explained in the introduction of \cite{BCOT}.
 
\begin{rem}\label{rem:equivalent}Let $f\in C(I)$. The following conditions are equivalent:
\begin{itemize}
 \item[(i)] $f$ has a dense set of periodic points.
 \item[(ii)] $f$ preserves a nonatomic probability measure $\mu$ with $\supp~\mu=I$. 
    \item[(iii)] There exists a homeomorphism $h$ of $I$ such that $h\circ f\circ h^{-1}\in C_{\lambda}(I)$.
\end{itemize}
\end{rem}

\begin{thm}\label{thm:dense}
There is  a dense $G_\delta$ set $\mathcal{Q}\subset \overline{C_{DP}(I)}$ such that if $g\in \mathcal{Q}$ then
for every $\delta>0$ there exists a positive integer $n$ so that $g^n$ is $\delta$-crooked. 
\end{thm}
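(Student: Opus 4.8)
The plan is to \emph{deduce} Theorem~\ref{thm:dense} from Theorem~\ref{thm:UniLimPresLeb} by transporting the conclusion across topological conjugacies, exploiting Remark~\ref{rem:equivalent}(iii), according to which $C_{DP}(I)$ is exactly the class of maps that are topologically conjugate, via a homeomorphism of $I$, to some $f\in C_\lambda(I)$. The subtle point to keep in mind from the start is that $\delta$-crookedness for a \emph{fixed} $\delta$ is not a conjugacy invariant — conjugating a $\delta$-crooked map yields a map that is only $\delta'$-crooked for some possibly much larger $\delta'$ — so one cannot simply carry a $\mathcal{T}$-map over with the same witness $n$. What \emph{is} conjugacy invariant is the property ``for every $\delta>0$ there is $n$ with $g^{\,n}$ being $\delta$-crooked'', because by \cite{BingPacific} and Proposition~4 of \cite{MT} this property is equivalent to $\underleftarrow{\lim}(I,g)$ being the pseudo-arc, and topologically conjugate bonding maps have homeomorphic inverse limits. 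This reformulation is the device that lets us pass back from a perturbed map in $C_\lambda(I)$ into $\overline{C_{DP}(I)}$.

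First I would set
\[
\mathcal{Q}:=\Big\{g\in\overline{C_{DP}(I)}\ :\ \forall\,\delta>0\ \exists\,n\in\N\ \text{with } g^{\,n}\text{ being }\delta\text{-crooked}\Big\},
\]
which is the set claimed by the theorem, and check it is $G_\delta$. Writing $U_k:=\{g\in C(I): \exists\,n\in\N,\ \exists\,\delta'<1/k\text{ with } g^{\,n}\ \delta'\text{-crooked}\}$, one has $\mathcal{Q}=\bigcap_{k\ge 1}\big(U_k\cap\overline{C_{DP}(I)}\big)$: if $g\in\mathcal{Q}$ then applying its defining property with $\delta=1/(2k)$ gives $g\in U_k$, while membership in every $U_k$ forces the defining property since a $\delta'$-crooked map with $\delta'<\delta$ is $\delta$-crooked. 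Each $U_k$ is open in $C(I)$: if $g^{\,n}$ is $\delta'$-crooked with $\delta'<1/k$, then $F\mapsto F^{\,n}$ is continuous at $g$ in the uniform metric (an elementary induction using uniform continuity of $g$), so for $F$ close to $g$ we can make $\rho(F^{\,n},g^{\,n})$ as small as we like, and Lemma~\ref{lem:crooked} then gives that $F^{\,n}$ is $(\delta'+2\rho(F^{\,n},g^{\,n}))$-crooked, which is $\delta''$-crooked for some $\delta''<1/k$ once $F$ is close enough; hence $F\in U_k$. Thus $\mathcal{Q}$ is a $G_\delta$ subset of $\overline{C_{DP}(I)}$.

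It then remains to prove $\mathcal{Q}$ is dense in $\overline{C_{DP}(I)}$; since $C_{DP}(I)$ is dense in its closure it suffices to approximate an arbitrary $g_0\in C_{DP}(I)$ by elements of $\mathcal{Q}$. Fix $\eps>0$. By Remark~\ref{rem:equivalent}(iii) choose a homeomorphism $h$ of $I$ with $f_0:=h\circ g_0\circ h^{-1}\in C_\lambda(I)$; since $h,h^{-1}$ are uniformly continuous, the operation $f\mapsto h^{-1}\circ f\circ h$ on $C(I)$ is uniformly continuous, so there is $\eta>0$ with $\rho(f,f_0)<\eta\Rightarrow\rho(h^{-1}\circ f\circ h,\,g_0)<\eps$. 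Using density of $\mathcal{T}$ (from Theorem~\ref{thm:UniLimPresLeb}) in $C_\lambda(I)$, pick $f_1\in\mathcal{T}$ with $\rho(f_1,f_0)<\eta$ and set $g_1:=h^{-1}\circ f_1\circ h$, so that $\rho(g_1,g_0)<\eps$. Now $g_1$ is conjugate to $f_1\in C_\lambda(I)$, hence $g_1\in C_{DP}(I)\subset\overline{C_{DP}(I)}$ by Remark~\ref{rem:equivalent}; and since $f_1\in\mathcal{T}$ has the property ``for every $\delta>0$ there is $n$ with $f_1^{\,n}$ $\delta$-crooked'' (Theorem~\ref{thm:UniLimPresLeb}), which is conjugacy invariant by the \cite{BingPacific,MT} characterization discussed above (cf.\ Corollary~\ref{cor:MincTransue}), $g_1$ inherits this property, i.e.\ $g_1\in\mathcal{Q}$. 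Therefore $\mathcal{Q}$ is dense, and being a dense $G_\delta$ it is the set required by the theorem.

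The main obstacle — really the only substantive point — is the failure of $\delta$-crookedness to be a conjugacy invariant, noted above; the resolution is precisely to launder the statement through its topological reformulation ``the inverse limit is the pseudo-arc'', which \emph{is} conjugacy invariant, accepting that the witness $n$ for a given $\delta$ may change under conjugation (harmless, since only its existence is asserted). Two minor points to handle with care: the approximation must be carried out on the dense subset $C_{DP}(I)$ and the approximants must be checked to lie in $C_{DP}(I)$ (they do, being conjugates of elements of $C_\lambda(I)$), so that density genuinely propagates to all of $\overline{C_{DP}(I)}$; and the transfer of perturbation sizes across $h$ must be justified by uniform continuity of $h$ and $h^{-1}$, which is where compactness of $I$ is used. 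Everything else — the openness of the $U_k$ and continuity of iteration — is routine.
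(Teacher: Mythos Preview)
Your proof is correct and follows the same overall strategy as the paper's: show the set is $G_\delta$ via Lemma~\ref{lem:crooked} and continuity of iteration, then prove density by conjugating into $C_\lambda(I)$ (Remark~\ref{rem:equivalent}), perturbing there into $\mathcal{T}$, and conjugating back.

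The one substantive difference is in how you verify that the conjugate $g_1=h^{-1}\circ f_1\circ h$ of a map $f_1\in\mathcal{T}$ inherits the property ``for every $\delta>0$ some iterate is $\delta$-crooked''. You invoke the full characterization from \cite{BingPacific} and \cite[Proposition~4]{MT} (as stated in the introduction) to recast this property as ``the inverse limit is the pseudo-arc'', which is manifestly conjugacy invariant since conjugate bonding maps yield homeomorphic inverse limits. The paper instead gives a direct elementary argument: given $\delta>0$, use uniform continuity of $h^{-1}$ to find $\gamma$ with $|h^{-1}(x)-h^{-1}(y)|<\delta$ whenever $|x-y|<\gamma$, choose $n$ so that $f_1^n$ is $\gamma$-crooked, and then read off $\delta$-crookedness of $g_1^n=h^{-1}\circ f_1^n\circ h$ straight from the definition. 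Your route is cleaner conceptually but relies on the converse direction of the characterization (pseudo-arc $\Rightarrow$ crooked iterates), while the paper's argument is self-contained and uses only the forward direction. Both are valid; the paper's version has the minor advantage of making explicit that the witness $n$ may change with $h$, and of not depending on external results beyond what is already proved in the paper.

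A small point in your favour: you approximate from the dense subset $C_{DP}(I)$, where Remark~\ref{rem:equivalent} applies directly, whereas the paper applies it to an arbitrary $f\in\overline{C_{DP}(I)}$ without comment.
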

\begin{proof}
For a non-atomic probability measure with full support $\mu$ the map $h\colon~I\to I$ defined as $h(x)=\mu([0,x])$ is a homeomorphism of $I$; moreover, if $f$ preserves $\mu$ then $h\circ f\circ h^{-1}\in C_{\lambda}(I)$ (see the proof of Theorem~2 in \cite{BCOT} for more detail on this construction). 
 
 By Remark~\ref{rem:equivalent}, for every $f\in \overline{C_{DP}(I)}$ we have a homeomorphism $h\colon I\to I$ so that $F=h\circ f\circ h^{-1}\in C_{\lambda}(I)$. But for every $\xi>0$ there is $G\in C_{\lambda}(I)$ so that for every $\delta>0$ there exists a positive integer $n$ so that $G^n$ is $\delta$-crooked and $\rho(F,G)<\xi$. But since $h$ is fixed and continuous, for any given $\eps>0$
 there is $\zeta>0$ such that if $\rho(F,G)<\zeta$ then $\rho(h^{-1}\circ F\circ h,h^{-1}\circ G\circ h)<\eps$. Assume for simplicity of notation that $h$ is increasing.
 
 Put $g=h^{-1}\circ G\circ h$. For every $\delta>0$ there is $\gamma$ such that if $|x-y|<\gamma$ then $|h^{-1}(x)-h^{-1}(y)|<\delta$.
 There is $n\in \N$ such that $G^n$ is $\gamma$-crooked. Fix any $c<d$. There are $h(c)<x\leq y<h(d)$ such that $|G^n(h(d))-G^n(x)|<\gamma$ and $|G^n (h(c))-G^n(y)|<\gamma$.
 If we denote $c'=h^{-1}(x)$ and $d'=h^{-1}(y)$ then $|g^n(c)-g^n(d')|<\delta$ and $|g^n(c')-g^n(d)|<\delta$. It means that for any $\delta>0$ there is $n$ such that $g^n$ is $\delta$-crooked.

 We also get $\rho(f,g)<\eps$ and $g\in \overline{C_{DP}(I)}$ since $g$ and $G$ are conjugate maps.
 
 But by Lemma~\ref{lem:crooked}, maps $g\in \overline{C_{DP}(I)}$ such that for every $\delta>0$ there exists a positive integer $n$ so that $g^n$ is $\delta$-crooked form a $G_\delta$ subset.
 Summing up, the set of maps $g\in \overline{C_{DP}(I)}$ such that for every $\delta>0$ there exists a positive integer $n$ so that $g^n$ is $\delta$-crooked is residual in $\overline{C_{DP}(I)}$.
\end{proof}

\section{Lifting one-dimensional dynamics to the invertible dynamics of the plane}\label{sec:extension}
\subsection{Introduction to inverse limits}
Now let us introduce
\emph{inverse limit spaces}, a technique that we will work with from now on. For a collection of continuous maps $f_i:Z_{i+1}\to Z_i$ where $Z_i$ are compact metric spaces for all $i\geq 0$ we define
\begin{equation}
\underleftarrow{\lim} (Z_i,f_i)
:=
\{\hat z:=\big(z_{0},z_1,\ldots \big) \in Z_0\times Z_1,\ldots\big|  
z_i\in Z_i, z_i=f_i(z_{i+1}), \forall i \geq 0\}.
\end{equation}
We equip $\underleftarrow{\lim} (Z_i,f_i)$ with the subspace 
metric induced from the 
\emph{product metric} in $Z_0\times Z_1\times\ldots$,
where $f_i$ are called the {\em bonding maps}.
If $Z_i=Z$ and $f_i=f$ for all $i\geq 0$, the inverse limit space 
$$
\hat Z:=\underleftarrow{\lim} (Z,f)
$$
also comes with a natural homeomorphism, 
called the \emph{natural extension} of $f$ (or the 
\emph{shift homeomorphism})
$\hat f:\hat Z
\to \hat Z$, 
defined as follows. 
For any $\hat z= \big(z_{0},z_1,\ldots \big)\in \hat Z$,
\begin{equation}
\hat{f}(\hat z):= \big(f(z_0),f(z_{1}),f(z_2),\ldots \big) =\big(f(z_0),z_{0},z_1,\ldots \big).
\end{equation}
By $\pi_{i}$ we shall denote
the \emph{$i$-th projection} 
from 
$\hat Z$ to its $i$-th coordinate. 

\subsection{Pseudo-arc and genericity}

In this section we provide consequences of the results obtained in the preceding section. As a tool we need Proposition~4 from \cite{MT} which we state as the following lemma.

\begin{lem}
 Let $f\colon I \to I$ be a continuous map with the property that for every $\eps>0$ there is an integer $n$ such that $f^n$ is $\eps$-crooked. Then $\hat I$ is the pseudo-arc.
\end{lem}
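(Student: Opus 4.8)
The plan is to use two classical theorems of Bing: the pseudo-arc is, up to homeomorphism, the unique non-degenerate chainable hereditarily indecomposable continuum \cite{BingPacific}, and a chainable continuum $X$ is hereditarily indecomposable if and only if every open chain cover of $X$ has a refinement which is \emph{crooked} in it --- meaning that when one travels along the finer chain from a link lying in a link $C_i$ of the coarser chain to a link lying in a link $C_j$ with $|i-j|$ large, one must first pass through a link meeting $C_{j-1}$ and afterwards through a link meeting $C_{i+1}$. We may assume $f$ is surjective: otherwise pass to the interval $J:=\bigcap_{m\ge 0}f^m(I)$, on which $f$ is onto, use $\hat I=\underleftarrow{\lim}(J,f|_J)$ and the fact that $\delta$-crookedness of $f^n$ restricts to $f^n|_J$; if $J$ were degenerate then $\hat I$ would be a point, a case irrelevant here (and excluded in all later applications, where $f\in C_\lambda(I)$ is onto). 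Thus $\hat I$ is a non-degenerate continuum, and as an inverse limit of arcs it is chainable. So it remains to establish hereditary indecomposability via Bing's chain criterion.

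Let $\mathcal{C}$ be an arbitrary open chain cover of $\hat I$. Chain covers of the form $\pi_k^{-1}(\mathcal{E})$, with $\mathcal{E}$ a chain on the interval $\pi_k(\hat I)\subseteq I$, are cofinal among chain covers of $\hat I$, and crookedness is inherited when one passes to a coarser cover; hence it suffices to produce a crooked refinement of $\mathcal{C}':=\pi_k^{-1}(\mathcal{E})$ for a suitable large $k$ and a fine chain $\mathcal{E}=(E_1,\dots,E_p)$. Choose $\beta>0$ smaller than a Lebesgue number of $\mathcal{E}$, so that every point within $\beta$ of some $E_i$ lies in $E_{i-1}\cup E_i\cup E_{i+1}$. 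By hypothesis there is $n$ with $f^n$ being $\beta$-crooked; pick an open chain $\mathcal{F}$ on $\pi_{k+n}(\hat I)$ refining $(f^n)^{-1}(\mathcal{E})$, fine enough that $f^n$ carries each link of $\mathcal{F}$ into a single link of $\mathcal{E}$ and that $\mathcal{D}:=\pi_{k+n}^{-1}(\mathcal{F})$ has mesh below any prescribed bound. Since $\pi_k=f^n\circ\pi_{k+n}$, the chain $\mathcal{D}$ refines $\pi_{k+n}^{-1}((f^n)^{-1}(\mathcal{E}))=\pi_k^{-1}(\mathcal{E})=\mathcal{C}'$.

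It then remains to check that $\mathcal{D}$ is crooked in $\mathcal{C}'$, and this is exactly where Definition~\ref{def:crooked} is used. Given links $F_s,F_t$ of $\mathcal{F}$ with $f^n(F_s)\subseteq E_i$, $f^n(F_t)\subseteq E_j$ and $|i-j|$ large, choose $c\in F_s$, $d\in F_t$ with $f^n(c)=a\in E_i$ and $f^n(d)=b\in E_j$ (possible by surjectivity of $f^n$). Applying $\beta$-crookedness of $f^n$ between $a$ and $b$ yields $c'$ between $c$ and $d$ with $f^n(c')\in E_{j-1}\cup E_j\cup E_{j+1}$ and then $d'$ between $c'$ and $d$ with $f^n(d')\in E_{i-1}\cup E_i\cup E_{i+1}$; the links of $\mathcal{F}$ (hence of $\mathcal{D}$) that contain $c'$ and $d'$ provide, in the correct order along the chain, the required fold-back from near $E_j$ back to near $E_i$. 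As this holds for every such pair of links, $\mathcal{D}$ is crooked in $\mathcal{C}'$, hence in $\mathcal{C}$; by Bing's criterion $\hat I$ is hereditarily indecomposable, and being a non-degenerate chainable hereditarily indecomposable continuum it is the pseudo-arc. The main obstacle is the careful bookkeeping translating the analytic statement of Definition~\ref{def:crooked} (about point-preimages of single values under $f^n$) into the combinatorial notion of one chain being crooked in another, while simultaneously handling surjectivity, the cofinality and coarsening properties of pulled-back chain covers, and the choice of $\beta$; these are precisely the points worked out in Proposition~4 of \cite{MT}.
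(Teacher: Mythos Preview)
Your sketch is correct and follows exactly the line of argument in Proposition~4 of \cite{MT}, which is what the paper invokes: the paper does not give its own proof of this lemma but simply states it as a citation of Minc--Transue. Your outline---chainability of $\hat I$, Bing's characterisation of the pseudo-arc, and the translation of $\eps$-crookedness of $f^n$ into the combinatorial crookedness of a pulled-back chain cover $\pi_{k+n}^{-1}(\mathcal F)$ inside $\pi_k^{-1}(\mathcal E)$---is precisely the content of that proposition, and you correctly flag the bookkeeping (surjectivity, cofinality of pulled-back chains, choice of $\beta$) as the points requiring care.
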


This combined with Theorem~\ref{thm:UniLimPresLeb} proves Corollaries~\ref{cor:MincTransue} and \ref{cor:MincTransue1}.

\begin{rem}
	Later in the paper we will often refer to the dense $G_{\delta}$ set $\mathcal{T}\subset C_{\lambda}(I)$ from Theorem~\ref{thm:UniLimPresLeb}, having in mind that inverse limit with the single bonding map being any map from $\mathcal{T}$ produces the pseudo-arc.
\end{rem}

We will also need the following measure-theoretic definition to state some obvious measure-theoretic consequences of the main theorem of the preceding section.

\begin{defn}\label{def:induced} Let $X$ be a Euclidean space with Lebesgue measure $\lambda$ and let $f\colon X\to X$ be a (surjective) map. An invariant measure $\hat{\mu}_f$ for the natural extension $\hat f\colon \hat X\to \hat X$ is called the \emph{inverse limit physical measure} if $\hat{\mu}_f$ has a basin $\hat B$ so that  $\lambda(\pi_0(\hat B))>0$.
\end{defn}

If we combine Theorem~\ref{thm:UniLimPresLeb} and Corollary~\ref{cor:MincTransue} and results from \cite{BT}, \cite{Li} and \cite{Br} (see also the survey \cite{CL} on dynamical properties that extend to inverse limit spaces) we get Corollary~\ref{cor:Typical}. Note that this corollary also contributes to the study of possible homeomorphisms on the pseudo-arc.

\begin{proof}[Proof of Corollary~\ref{cor:Typical}]
First we intersect $\mathcal{T}$ with the dense $G_{\delta}$ set so that properties from \cite{BT} hold; we obtain a dense $G_{\delta}$ set in $C_{\lambda}(I)$ and we denote it by $\mathcal{T}'$. Recall that $C_{\lambda}(I)$ is a complete space. Thus, by the Alexandrov theorem (\cite{Kur}, p. 408), $\mathcal{T}$ is homeomorphic to a complete space through complete metrization. Even more, the new metric that we define on $\mathcal{T}'$ can be taken so that the topology of $\mathcal{T}'$ with respect to $C_{\lambda}(I)$ is unchanged. Thus, if $f_n\to f$ uniformly in $C_{\lambda}(I)$ for all $\{f_n\}_{n\in \N}, f\in \mathcal{T}\subset C_{\lambda}(I)$, then also $f_n\to f$ uniformly in $\mathcal{T}'$.

Since $f$ is leo it is also transitive and because $f\in C_{\lambda}(I)$ it holds it has a dense set of periodic points. 
The last two items follow directly from \cite{BCOT}.
\end{proof}

The following proposition is (in particular) implied by Theorem~2 from \cite{BCOT} which states that there exist a dense collection of Lebesgue measure-preserving interval maps with Lebesgue measure $1$ on the set of periodic points and positive measure on periodic points of any period $k\geq 1$. The proof of Theorem~2 from \cite{BCOT} constructs a topological conjugacy between a dense collection of generic Lebesgue preserving maps in $C_{\lambda}(I)$ (which we have shown that have iterates being $\delta$-crooked for any $\delta>0$) and the maps with the former property stated in this paragraph.

\begin{prop} 
There exists a transitive homeomorphism on the pseudo-arc $P$ which preserves induced physical inverse limit measure $\hat{m}$ on $P$ with measure $1$ on the set of periodic points and positive measure on periodic points of any period $k\geq 1$.
\end{prop}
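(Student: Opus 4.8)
The plan is to realise the required homeomorphism as the natural extension of one of the interval maps produced by Theorem~2 of \cite{BCOT}. First I would fix, by that theorem, a map $g\in C_{\lambda}(I)$ with $\lambda(\Per(g))=1$ and $\lambda(\Per_{k}(g))>0$ for every $k\ge 1$; the construction in \cite{BCOT} moreover provides such a $g$ together with a homeomorphism $h$ of $I$ conjugating $g$ to a map $f$ lying in the dense $G_{\delta}$ set $\mathcal{T}$ of Theorem~\ref{thm:UniLimPresLeb}. Since $f\in\mathcal{T}$, Corollary~\ref{cor:MincTransue} says $\underleftarrow{\lim}(I,f)$ is the pseudo-arc, and a conjugacy of bonding maps induces a homeomorphism of the corresponding inverse limits, so $\hat I_{g}:=\underleftarrow{\lim}(I,g)$ is homeomorphic to the pseudo-arc $P$ as well; alternatively one transfers the ``some iterate is $\delta$-crooked'' property from $f$ to $g$ exactly as in the proof of Theorem~\ref{thm:dense} and applies Proposition~4 of \cite{MT}. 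Conjugating by a fixed homeomorphism $\hat I_{g}\to P$, the shift homeomorphism $\hat g$ becomes a homeomorphism of $P$.

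Next I would record the dynamical and measure-theoretic features. Being generic, $f$ is leo (\cite{BT}); leo is a conjugacy invariant, so $g$ is leo, and a leo bonding map has a topologically mixing, in particular transitive, natural extension (cf.\ \cite{Li,CL}), so $\hat g$ is transitive. Since $g\in C_{\lambda}(I)$, the invariant measure $\lambda$ lifts to the (unique) $\hat g$-invariant measure $\hat m$ on $\hat I_{g}$ with $(\pi_{0})_{*}\hat m=\lambda$, and by the lifting theorem of \cite{KRS} the measure $\hat m$ is an inverse limit physical measure in the sense of Definition~\ref{def:induced}. Transporting $\hat m$ along $\hat I_{g}\cong P$ gives the measure on $P$.

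The heart of the argument, and the step I expect to be the main obstacle, is to show that the full-measure-on-periodic-points structure of $g$ passes to the inverse limit, i.e.\ that $\hat m(\Per(\hat g))=1$ and $\hat m(\Per_{k}(\hat g))>0$ for all $k$. The point is that $g$ restricted to $\Per(g)$ is $\lambda$-almost everywhere invertible: on the $g$-invariant set of points whose period divides $k$ one has $g^{k}=\id$, so $g^{k-1}$ is a measurable inverse there, and $\Per(g)$ is the countable union of these sets over $k$. This lets one define a measurable section $\psi\colon\Per(g)\to\hat I_{g}$ by $\psi(x)=(x,g^{-1}x,g^{-2}x,\dots)$, with the backward orbit taken inside $\Per(g)$; it satisfies $\pi_{0}\circ\psi=\id$ and $\hat g\circ\psi=\psi\circ g$, and it maps a point of $g$-period $p$ to a point of $\hat g$-period $p$ (its coordinate sequence is itself periodic). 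Hence $\psi_{*}\lambda$ is $\hat g$-invariant with $(\pi_{0})_{*}(\psi_{*}\lambda)=\lambda$, so by uniqueness of the lift $\psi_{*}\lambda=\hat m$; consequently $\hat m$ is carried by the periodic ``section'' $\psi(\Per(g))$, whence $\hat m(\Per(\hat g))\ge\hat m(\psi(\Per(g)))=\lambda(\Per(g))=1$ and $\hat m(\Per_{k}(\hat g))\ge\lambda(\Per_{k}(g))>0$. The careful part is the measurable-inverse and uniqueness-of-lift bookkeeping; once that is in place, the identification of $\hat I_{g}$ with the pseudo-arc, the transitivity, and the physicality of $\hat m$ all follow from the transfer principles recorded above (Corollary~\ref{cor:MincTransue}, conjugacy-invariance of leo, and \cite{KRS}).
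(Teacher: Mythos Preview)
Your proposal is correct and follows exactly the route the paper indicates: take a map $g$ from Theorem~2 of \cite{BCOT}, use the conjugacy to a generic (hence $\mathcal{T}$-) map to conclude $\hat I_g$ is the pseudo-arc, and lift the dynamical and measure-theoretic data to the natural extension. The paper states the proposition essentially as a remark and does not spell out the lift of the periodic-point structure; your measurable-section argument (via $\psi$ and uniqueness of the inverse-limit measure) is a clean way to make that implicit step explicit.
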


\section{A one-parameter family of pseudo-arc attractors with continuously varying prime end rotation numbers.}\label{sec:arc}

 In this section we will construct a parametrized family of pseudo-arc attractors that vary continuously with one parameter. We will start with a particular piecewise linear family that varies continuously and has appropriate properties for a subsequent treatment; then we will repeatedly perturb the whole family with the same perturbation to obtain in the uniform limit a sufficiently crooked family of maps. Then we will apply the BBM procedure to obtain a continuously varying parametrized family of sphere homeomorphisms with the pseudo-arc attractors. Let us note that there are many non-conjugate families of interval maps that satisfy properties below and we could have picked them as a starting point. On the other hand, there is a priori no guarantee that a differently chosen family of interval maps will give us a different family of pseudo-arc attractors due to the subsequent application of particular perturbations.
 
 For what follows we refer the reader to Figure~\ref{fig:parametrized}. For any $t\in[0,1]$ let $\tilde{f}_{t}$ be defined by
 $\tilde{f}_{t}(\frac{2}{7})=\tilde{f}_{t}(\frac{4}{7})=\tilde{f}_{t}(\frac{17}{21})=\tilde{f}_{t}(1)=0$ and $\tilde{f}_{t}(\frac{3}{7})=\tilde{f}_{t}(\frac{5}{7})=\tilde{f}_{t}(\frac{19}{21})=1$ and piecewise linear between these points on the interval $[\frac{2}{7},1]$. Furthermore on the interval $x\in [0,\frac{2}{7}]$ let:

\begin{equation}\label{eq:param}
\tilde{f}_{t}(x)=
\begin{cases}
7(x-t\frac{4}{21});& x\in (1-t)[0,\frac{1}{7}]+t\frac{4}{21},\\
1-7(x-\frac{1}{7}(1-t)-t\frac{4}{21});& x\in (1-t)[\frac{1}{7},\frac{2}{7}]+t\frac{4}{21},\\
\frac{21}{2}(x-t\frac{2}{21});& x\in t[\frac{2}{21},\frac{4}{21}],\\
1-\frac{21}{2}x;& x\in t[0,\frac{2}{21}],\\
-\frac{21}{2}(x-\frac{2}{7});& x\in [\frac{2}{7}-t\frac{2}{21},\frac{2}{7}],
\end{cases}
\end{equation}

see Figure~\ref{fig:parametrized} to see graphs of three special parameters in this family.

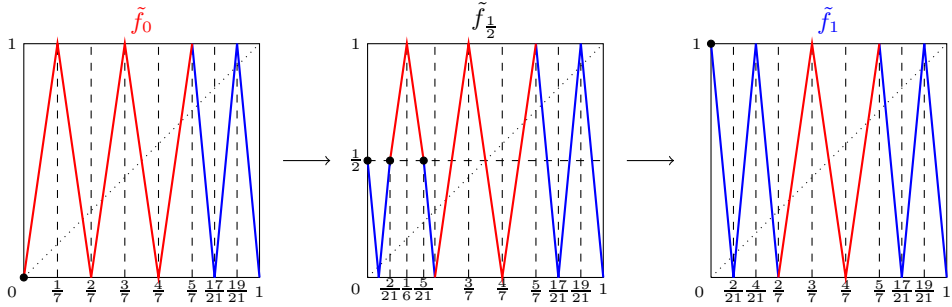
\begin{figure}[!ht]
	\begin{tikzpicture}[scale=3.1]
	\draw (0,0)--(0,1)--(1,1)--(1,0)--(0,0);
	\draw[dotted] (0,0)--(1,1);
	\draw[dashed] (1/7,0)--(1/7,1);
	\draw[dashed] (2/7,0)--(2/7,1);
	\draw[dashed] (3/7,0)--(3/7,1);
	\draw[dashed] (4/7,0)--(4/7,1);
	\draw[dashed] (5/7,0)--(5/7,1);
	\draw[dashed] (17/21,0)--(17/21,1);
	\draw[dashed] (19/21,0)--(19/21,1);
	\node at (-0.05,-0.06) {\tiny $0$};
	\node at (1/7,-0.06) {\tiny $\frac{1}{7}$};
	\node at (2/7,-0.06) {\tiny $\frac{2}{7}$};
	\node at (3/7,-0.06) {\tiny $\frac{3}{7}$};
	\node at (4/7,-0.06) {\tiny $\frac{4}{7}$};
	\node at (5/7,-0.06) {\tiny $\frac{5}{7}$};
	\node at (17/21,-0.06) {\tiny $\frac{17}{21}$};
	\node at (19/21,-0.06) {\tiny $\frac{19}{21}$};
	\node at (1,-0.06) {\tiny$1$};
	\node at (-0.05,1) {\tiny $1$};
	\draw[thick,red] (0,0)--(1/7,1)--(2/7,0);
	\draw[thick,blue] (1,0)--(19/21,1)--(17/21,0)--(5/7,1);
	\draw[thick,red] (2/7,0)--(3/7,1)--(4/7,0)--(5/7,1);
	\node at (1/2,1.1) {\small {\color{red}$\tilde{f}_{0}$}};
	\draw[->] (1.1,1/2)--(1.3,1/2);
	\draw[fill] (0,0) circle (0.015);
	\end{tikzpicture}
	\begin{tikzpicture}[scale=3.1]
	\draw (0,0)--(0,1)--(1,1)--(1,0)--(0,0);
	\draw[dotted] (0,0)--(1,1);
	\draw[dashed] (1/6,0)--(1/6,1);
	\draw[dashed] (2/21,0)--(2/21,1/2);
	\draw[dashed] (5/21,0)--(5/21,1/2);
	\draw[dashed] (2/7,0)--(2/7,1);
	\draw[dashed] (3/7,0)--(3/7,1);
	\draw[dashed] (4/7,0)--(4/7,1);
	\draw[dashed] (5/7,0)--(5/7,1);
	\draw[dashed] (17/21,0)--(17/21,1);
	\draw[dashed] (19/21,0)--(19/21,1);
	\draw[dashed] (0,1/2)--(1,1/2);
	\node at (-0.05,-0.05) {\tiny $0$};
	\node at (2/21,-0.05) {\tiny $\frac{2}{21}$};
	\node at (5/21,-0.05) {\tiny $\frac{5}{21}$};
	\node at (1/6,-0.05) {\tiny $\frac{1}{6}$};
	\node at (3/7,-0.05) {\tiny $\frac{3}{7}$};
	\node at (4/7,-0.05) {\tiny $\frac{4}{7}$};
	\node at (5/7,-0.06) {\tiny $\frac{5}{7}$};
    \node at (17/21,-0.06) {\tiny $\frac{17}{21}$};
	\node at (19/21,-0.06) {\tiny $\frac{19}{21}$};
	\node at (1,-0.05) {\tiny$1$};
	\node at (-0.05,1/2) {\tiny$\frac{1}{2}$};
	\node at (-0.05,1) {\tiny $1$};
	\draw[thick,blue] (0,1/2)--(1/21,0)--(2/21,1/2);
	\draw[thick,blue] (5/21,1/2)--(2/7,0);
	\draw[thick,red] (2/21,1/2)--(1/6,1)--(5/21,1/2);
	\draw[thick,blue] (1,0)--(19/21,1)--(17/21,0)--(5/7,1);
	\draw[thick,red] (2/7,0)--(3/7,1)--(4/7,0)--(5/7,1);
	\node at (1/2,1.1) {\small $\tilde{f}_{\frac{1}{2}}$};
	\draw[->] (1.1,1/2)--(1.3,1/2);
	\draw[fill] (0,1/2) circle (0.015);
	\draw[fill] (2/21,1/2) circle (0.015);
	\draw[fill] (5/21,1/2) circle (0.015);
	\end{tikzpicture}
	\begin{tikzpicture}[scale=3.1]
	\draw (0,0)--(0,1)--(1,1)--(1,0)--(0,0);
	\draw[dotted] (0,0)--(1,1);
    \draw[dashed] (2/21,0)--(2/21,1);
	\draw[dashed] (4/21,0)--(4/21,1);
	\draw[dashed] (2/7,0)--(2/7,1);
	\draw[dashed] (3/7,0)--(3/7,1);
	\draw[dashed] (4/7,0)--(4/7,1);
	\draw[dashed] (5/7,0)--(5/7,1);
	\draw[dashed] (17/21,0)--(17/21,1);
	\draw[dashed] (19/21,0)--(19/21,1);
	\node at (-0.05,-0.06) {\tiny $0$};
	\node at (2/21,-0.06) {\tiny $\frac{2}{21}$};
	\node at (4/21,-0.06) {\tiny $\frac{4}{21}$};
	\node at (2/7,-0.06) {\tiny $\frac{2}{7}$};
	\node at (3/7,-0.06) {\tiny $\frac{3}{7}$};
	\node at (4/7,-0.06) {\tiny $\frac{4}{7}$};
	\node at (5/7,-0.06) {\tiny $\frac{5}{7}$};
    \node at (17/21,-0.06) {\tiny $\frac{17}{21}$};
	\node at (19/21,-0.06) {\tiny $\frac{19}{21}$};
	\node at (1,-0.06) {\tiny$1$};
	\node at (-0.06,1) {\tiny $1$};
	\draw[thick,blue] (0,1)--(2/21,0)--(4/21,1)--(2/7,0);
	\draw[thick,blue] (1,0)--(19/21,1)--(17/21,0)--(5/7,1);
	\draw[thick,red] (2/7,0)--(3/7,1)--(4/7,0)--(5/7,1);
	\node at (1/2,1.1) {\small {\color{blue}$\tilde{f}_{1}$}};
	\draw[fill] (0,1) circle (0.015);
	\end{tikzpicture}
	\caption{Graphs of maps $\tilde{f}_0$, $\tilde{f}_{\frac{1}{2}}$ and $\tilde{f}_1$.}\label{fig:parametrized}
\end{figure}

\begin{prop}
For every $t\in[0,1]$, the map $\tilde{f}_{t}\in C_{\lambda}(I)$.
\end{prop}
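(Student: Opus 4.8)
The plan is to check that each $\tilde{f}_t$ is a well-defined continuous piecewise linear self-map of $I$ and then to verify the Lebesgue-preservation criterion of Observation~\ref{l:3} by a direct count of preimage branches and their slopes. Continuity and the piecewise linear structure are routine: on $[\tfrac27,1]$ the map is prescribed by its values $0,1,0,1,0,1,0$ at the points $\tfrac27,\tfrac37,\tfrac47,\tfrac57,\tfrac{17}{21},\tfrac{19}{21},1$ and is affine in between, while on $[0,\tfrac27]$ one checks that the five affine pieces in \eqref{eq:param} have matching one-sided values at the parameter-dependent breakpoints $t\tfrac{2}{21}$, $t\tfrac{4}{21}$, $(1-t)\tfrac17+t\tfrac{4}{21}$, $\tfrac27-t\tfrac{2}{21}$, and glue continuously to the piece on $[\tfrac27,1]$ at $\tfrac27$ (all the relevant one-sided values there being $0$); see Figure~\ref{fig:parametrized}.

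For the measure preservation I would split $I=[0,\tfrac27]\cup[\tfrac27,1]$ and show that $\lambda(\tilde f_t^{-1}(A)\cap[\tfrac27,1])=\tfrac57\lambda(A)$ and $\lambda(\tilde f_t^{-1}(A)\cap[0,\tfrac27])=\tfrac27\lambda(A)$ for every Borel set $A\subset I$; adding the two contributions and invoking Observation~\ref{l:3} (equivalently \eqref{eq:basic}) then yields $\lambda(\tilde f_t^{-1}(A))=\lambda(A)$. The restriction $\tilde f_t|_{[\frac27,1]}$ is independent of $t$ and splits into six laps, each mapped affinely and onto all of $I$: three of slope $\pm 7$ on intervals of length $\tfrac17$ (namely $[\tfrac27,\tfrac37]$, $[\tfrac37,\tfrac47]$, $[\tfrac47,\tfrac57]$) and three of slope $\pm\tfrac{21}{2}$ on intervals of length $\tfrac{2}{21}$ (namely $[\tfrac57,\tfrac{17}{21}]$, $[\tfrac{17}{21},\tfrac{19}{21}]$, $[\tfrac{19}{21},1]$). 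Hence each component of $\tilde f_t^{-1}(A)$ inside $[\tfrac27,1]$ has measure $\tfrac17\lambda(A)$ or $\tfrac{2}{21}\lambda(A)$, and $3\cdot\tfrac17+3\cdot\tfrac{2}{21}=\tfrac57$, which gives the first identity.

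On $[0,\tfrac27]$ the five cases of \eqref{eq:param} regroup (compare the graphs of $\tilde f_0$, $\tilde f_{1/2}$, $\tilde f_1$ in Figure~\ref{fig:parametrized}) into two laps of slope $\pm 7$ on intervals of length $(1-t)\tfrac17$ with common image $[t,1]$, together with three laps of slope $\pm\tfrac{21}{2}$ on intervals of length $t\tfrac{2}{21}$ with common image $[0,t]$; since $2(1-t)\tfrac17+3t\tfrac{2}{21}=\tfrac27$, these laps exhaust $[0,\tfrac27]$. Consequently, for $A\subset(0,t)$ only the three $\pm\tfrac{21}{2}$-laps contribute, so $\lambda(\tilde f_t^{-1}(A)\cap[0,\tfrac27])=3\cdot\tfrac{2}{21}\lambda(A)=\tfrac27\lambda(A)$; for $A\subset(t,1)$ only the two $\pm 7$-laps contribute, so $\lambda(\tilde f_t^{-1}(A)\cap[0,\tfrac27])=2\cdot\tfrac17\lambda(A)=\tfrac27\lambda(A)$; and splitting an arbitrary Borel $A$ at the level $t$ gives the second identity in full generality.

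The only genuinely delicate point is this last combinatorial bookkeeping on $[0,\tfrac27]$: one has to match the five parameter-dependent cases of \eqref{eq:param} with the five monotone laps above, keep track of which laps are surjective onto $[0,t]$ and which onto $[t,1]$, and confirm that for every height $y\in(0,1)$ the reciprocals of the slopes of the preimage branches contained in $[0,\tfrac27]$ sum to exactly $\tfrac27$ uniformly in $t$. Once this is verified, the proposition follows immediately from Observation~\ref{l:3}.
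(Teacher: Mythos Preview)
Your proof is correct and follows essentially the same approach as the paper's: both invoke Observation~\ref{l:3} and verify it by noting that on $[0,\tfrac27]$ a generic value $y$ has either three preimages on branches of slope $\pm\tfrac{21}{2}$ (when $y<t$) or two preimages on branches of slope $\pm7$ (when $y>t$), and in either case the reciprocals sum to $\tfrac27$. You are simply more explicit than the paper, which leaves the $[\tfrac27,1]$ contribution of $\tfrac57$ implicit and omits the lap-by-lap image bookkeeping you spell out.
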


\begin{proof}
Applying Observation~\ref{l:3} it clearly holds that $\tilde{f}_0,\tilde{f}_1\in C_{\lambda}(I)$. 
For $x\in [0,\frac{2}{7}]$ it holds that $s_0:=|\tilde{f}'_0(x)|=7$ and $s_1:=|\tilde{f}'_1(x)|=\frac{21}{2}$; thus $s_1/s_0=3/2$. 
Note that for any $t\in (0,1)$ it holds that for $x\in [0,\frac{2}{7}]$ and $y\in I$ either there exist $3$ points of $\tilde{f}_{t}^{-1}$ in $[0,\frac{2}{7}]$ where $\tilde f_t$ has slope $\frac{21}{2}$ or $2$ points where $\tilde{f_t}$ has slope $7$.
Therefore, invoking Observation~\ref{l:3} it also follows that $\tilde{f}_{t}\in C_{\lambda}(I)$ for all $t\in (0,1)$.
\end{proof}

\begin{obs}\label{obs:periodicpoints}
$\{\tilde{f}_{t}\}_{t\in[0,1]}$ is a family of continuous piecewise linear maps varying continuously with $t\in[0,1]$.
Furthermore, $\tilde{f}_{0}$ is an $8$-fold map with $\tilde{f}_{0}(0)=0$ and $\tilde{f}_{1}$ is a $9$-fold map so that $\tilde{f}_1(0)=1$ and $\tilde{f}_1(1)=0$. Moreover, since it holds that $\lambda_{n,k}(0)=0$ and $\lambda_{n,k}(1)=1$ for any odd $n$ and $k\in \N$ it holds for $\tilde{g}_{t}:=\tilde{f}_{t}\circ \lambda_{n_1,k_1}\circ\ldots \circ \lambda_{n_m,k_m}$ for any $k_1,\ldots, k_m\geq 1$ and odd $n_1,\ldots, n_m\geq 1$ that $\tilde{g}_0(0)=0$, $\tilde{g}_1(0)=1$ and $\tilde{g}_1(1)=0$.
\end{obs}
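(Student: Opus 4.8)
The three assertions will all follow from direct inspection of the explicit definitions, the only external ingredient being the fact that each building block $\lambda_{n,k}$ fixes the endpoints of $I$; so the plan is to verify them in order, with no appeal to anything deep.

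\emph{Continuity of the family.} First I would note that on $[\tfrac27,1]$ the map $\tilde f_t$ is prescribed by fixed values at the fixed abscissae $\tfrac27,\tfrac37,\tfrac47,\tfrac57,\tfrac{17}{21},\tfrac{19}{21},1$ together with linear interpolation, so $\tilde f_t|_{[2/7,1]}$ does not depend on $t$. On $[0,\tfrac27]$ the formula \eqref{eq:param} exhibits $\tilde f_t$, for each fixed $t\in(0,1)$, as a piecewise affine map whose breakpoints $0<\tfrac{2t}{21}<\tfrac{4t}{21}<\tfrac{1-t}{7}+\tfrac{4t}{21}<\tfrac27-\tfrac{2t}{21}<\tfrac27$ are affine functions of $t$ listed in increasing order, and whose value at each breakpoint is likewise affine in $t$. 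Then I would check that the one-sided limits of the successive affine branches agree at every shared breakpoint, so that the branches glue into a genuine element of $C(I)$; at $t=0$ and $t=1$ several of the listed intervals degenerate to points and the surviving branches reduce to the explicitly described maps $\tilde f_0$ and $\tilde f_1$. Since a piecewise affine map with boundedly many breakpoints whose abscissae and ordinates depend continuously on a parameter depends continuously on that parameter in the metric $\rho$, this gives that $\{\tilde f_t\}_{t\in[0,1]}$ is a continuously varying family of continuous piecewise linear maps.

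\emph{The fold counts and the boundary values at $t=0$ and $t=1$.} Here I would simply read off the turning points. For $t=0$ the two bumps near $0$ collapse and $\tilde f_0$ runs through $(0,0),(\tfrac17,1),(\tfrac27,0),(\tfrac37,1),(\tfrac47,0),(\tfrac57,1),(\tfrac{17}{21},0),(\tfrac{19}{21},1),(1,0)$, giving $7$ turning points, hence $8$ maximal intervals of monotonicity, and $\tilde f_0(0)=0$; so $\tilde f_0$ is an $8$-fold map with $\tilde f_0(0)=0$. For $t=1$ the map $\tilde f_1$ runs through $(0,1),(\tfrac2{21},0),(\tfrac4{21},1),(\tfrac27,0),(\tfrac37,1),(\tfrac47,0),(\tfrac57,1),(\tfrac{17}{21},0),(\tfrac{19}{21},1),(1,0)$, giving $8$ turning points, hence $9$ maximal intervals of monotonicity, with $\tilde f_1(0)=1$ and $\tilde f_1(1)=0$; so $\tilde f_1$ is a $9$-fold map with the asserted boundary behaviour.

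\emph{The composed maps $\tilde g_t$.} By Observation~\ref{obs:lambda1}\eqref{obs:lambda1(3)}, applied with $j=0$ and with $j=n+k-1$, we have $\lambda_{n,k}(0)=0$ and $\lambda_{n,k}(1)=1$ for every odd $n\geq 7$ and every $k\geq 1$, so the composition $\lambda_{n_1,k_1}\circ\cdots\circ\lambda_{n_m,k_m}$ also fixes both $0$ and $1$. Hence $\tilde g_t(0)=\tilde f_t(0)$ and $\tilde g_t(1)=\tilde f_t(1)$ for every $t$, and the previous paragraph gives $\tilde g_0(0)=0$, $\tilde g_1(0)=1$ and $\tilde g_1(1)=0$. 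No step here is conceptually hard; I expect the only genuine work to be the bookkeeping in the first part — confirming that the affine branches of \eqref{eq:param} are consistently ordered for all $t\in(0,1)$ and match at the shared breakpoints, and tracking precisely which intervals collapse at $t\in\{0,1\}$ so that the $8$-fold and $9$-fold counts, together with the endpoint values, come out correctly.
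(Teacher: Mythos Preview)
Your verification is correct. In the paper this statement is recorded as an Observation without proof, so there is no ``paper's proof'' to compare against; your argument is exactly the direct inspection the reader is expected to carry out, and your use of Observation~\ref{obs:lambda1}\eqref{obs:lambda1(3)} for the endpoint behaviour of $\lambda_{n,k}$ is the intended reference.
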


\begin{obs}\label{obs:slope}
For every $t\in[0,1]$ and for all points $x\in I$ where $\tilde{f}'_{t}$ is defined it holds that $\frac{21}{2}\geq |\tilde{f}_{t}'(x)|\geq 7$.
\end{obs}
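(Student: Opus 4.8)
The plan is a direct, finite inspection of the affine pieces of $\tilde f_t$; essentially no mathematics beyond bookkeeping is involved. By Observation~\ref{obs:periodicpoints} each $\tilde f_t$ is piecewise linear, so $\tilde f_t'$ is defined at every point of $I$ except the finitely many breakpoints, and on each maximal interval of monotonicity it is constant and equals, up to sign, the slope of the corresponding affine piece. Hence it suffices to check that the absolute value of the slope of every affine piece lies in $[7,\frac{21}{2}]$; since $7$ and $\frac{21}{2}$ are the endpoints of that interval, it is in fact enough to observe that every slope is $\pm 7$ or $\pm\frac{21}{2}$.

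First I would dispose of the portion of the graph lying over $[\frac27,1]$, which is the same for every $t$. There $\tilde f_t$ is affine between consecutive points of $\{\frac27,\frac37,\frac47,\frac57,\frac{17}{21},\frac{19}{21},1\}$, taking the values $0$ and $1$ alternately at those points. The three leftmost of the resulting subintervals have length $\frac17$ and the three rightmost have length $\frac{2}{21}$, and across each of them the function moves a total height $1$; so the slopes are $\pm 7$ on the first three subintervals and $\pm\frac{21}{2}$ on the last three.

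Then I would read the slopes off the defining formula \eqref{eq:param} for the portion over $[0,\frac27]$: the five cases there have slopes $7$, $-7$, $\frac{21}{2}$, $-\frac{21}{2}$ and $-\frac{21}{2}$ respectively, which settles this part as well. For the boundary parameters $t=0$ and $t=1$ some of these five pieces collapse to a single point, so the corresponding slope simply does not occur; this causes no difficulty. Combining the two portions gives $|\tilde f_t'(x)|\in\{7,\frac{21}{2}\}\subset[7,\frac{21}{2}]$ at every point where the derivative is defined, which is the assertion. The only place requiring a little care — and the closest thing to an obstacle in this argument — is lining up the subintervals of \eqref{eq:param} correctly, in particular at $t\in\{0,1\}$, but that is a routine verification.
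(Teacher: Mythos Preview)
Your verification is correct and is exactly the intended one: the paper records this as an observation without proof, since it follows immediately by reading off the slopes $\pm 7$ and $\pm\tfrac{21}{2}$ from the definition of $\tilde f_t$ on $[\tfrac27,1]$ and from the explicit formula~\eqref{eq:param} on $[0,\tfrac27]$. There is nothing to add.
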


Due to the previous observation we obtain the following.

\begin{obs}\label{obs:diamA}

For every $t\in[0,1]$ and any subinterval $A\subset I$ which does not contain two critical points it holds that $\diam(\tilde{f}_{t}(A))>3\diam(A)$.
\end{obs}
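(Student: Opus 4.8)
The plan is to reduce the estimate to the lower slope bound $|\tilde f_t'|\ge 7$ supplied by Observation~\ref{obs:slope}, splitting into two cases according to whether $A$ contains a critical point. Write $A=[a,b]$ (open or half-open $A$ make no difference, since $\diam$ only sees the endpoints of an interval). Since $\tilde f_t$ is continuous, $\tilde f_t(A)$ is an interval and $\diam(\tilde f_t(A))=\max_{x\in A}\tilde f_t(x)-\min_{x\in A}\tilde f_t(x)$.

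First I would treat the case that $A$ contains no critical point. Then $\tilde f_t|_A$ is monotone, so summing the increments over the finitely many linear pieces of $\tilde f_t$ meeting $[a,b]$, each of absolute slope at least $7$, gives
\[
\diam(\tilde f_t(A))=|\tilde f_t(b)-\tilde f_t(a)|\ge 7\,(b-a)=7\,\diam(A)>3\,\diam(A).
\]
Next, suppose $A$ contains exactly one critical point $c$, so $a\le c\le b$. Then $\tilde f_t$ is monotone on each of $[a,c]$ and $[c,b]$, again with absolute slope at least $7$ on the interior of each, hence
\[
\diam(\tilde f_t(A))\ge \max\{|\tilde f_t(c)-\tilde f_t(a)|,\ |\tilde f_t(c)-\tilde f_t(b)|\}\ge 7\max\{c-a,\ b-c\}\ge \tfrac72\,(b-a)>3\,\diam(A),
\]
where we used $\max\{c-a,\ b-c\}\ge\tfrac12(b-a)$. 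Combining the two cases gives the claim.

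The only point that needs a little care is the second case: one must not add the contributions of the two monotone pieces, since the two one-sided images can overlap; instead one takes the larger of the two one-sided increments and uses that the longer sub-piece has length at least $\tfrac12\diam(A)$, which yields the factor $\tfrac72$. This is still safely above $3$, so there is no genuine obstacle here — the hypothesis that $A$ contains at most one critical point is exactly what keeps $\tilde f_t|_A$ two-monotone, and the constants in Observation~\ref{obs:slope} leave ample room.
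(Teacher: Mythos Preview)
Your proof is correct and follows exactly the approach the paper intends: the paper states this as an observation derived ``Due to the previous observation'' (the slope bound $|\tilde f_t'|\ge 7$) without writing out the details, and your two-case argument based on that bound is precisely the intended justification. The only implicit assumption is that $A$ is nondegenerate, which is how the observation is used downstream.
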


\begin{prop}\label{prop:leo}
For every $t\in[0,1]$, the map $\tilde{f}_{t}$ is leo.
\end{prop}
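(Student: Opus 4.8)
The plan is to show that every map $\tilde f_t$ in the family is locally eventually onto by exploiting the uniform lower bound on slopes (Observation~\ref{obs:slope}) together with the Markov-like structure of the family on the fixed partition $0<\tfrac17<\tfrac27<\cdots<\tfrac57<\tfrac{17}{21}<\tfrac{19}{21}<1$. The key point is that each $\tilde f_t$ has slope at least $7>1$ everywhere it is differentiable, so images of intervals expand until they hit a critical point, and then a bounded number of further iterates covers a partition interval, which in turn iterates onto all of $I$.

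\textbf{Step 1: expansion until a partition interval is covered.} First I would fix $t\in[0,1]$ and an arbitrary nondegenerate interval $J\subset I$. By Observation~\ref{obs:diamA}, as long as the iterate $\tilde f_t^{\,k}(J)$ contains no critical point in its interior, each application of $\tilde f_t$ at least triples the diameter; since diameters are bounded by $1$, after finitely many steps some iterate $\tilde f_t^{\,k_0}(J)$ must contain a critical point of $\tilde f_t$ in its interior. (If $\tilde f_t^{\,k}(J)$ happens to equal $I$ we are already done.) Here one has to be slightly careful: $\tilde f_t^{\,k}(J)$ is an interval because $\tilde f_t$ is continuous, and ``contains no critical point'' means $\tilde f_t$ restricted to it is monotone, so the diameter estimate of Observation~\ref{obs:diamA} applies verbatim.

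\textbf{Step 2: from a neighbourhood of a critical point to a full partition interval.} Once $\tilde f_t^{\,k_0}(J)$ contains a critical point $c$ in its interior, it contains a one-sided neighbourhood of $c$ on which $\tilde f_t$ is affine with slope of absolute value at least $7$ and whose image under $\tilde f_t$ therefore has diameter at least $7$ times that neighbourhood's, hence (after possibly a few more applications of the tripling estimate on a monotone piece) some further iterate contains a whole interval of the partition $\mathcal P=\{[0,\tfrac17],[\tfrac17,\tfrac27],\dots\}$. To make this rigorous I would note that the partition $\mathcal P$ is a common refinement witnessing that $\tilde f_0$ and $\tilde f_1$, and hence every $\tilde f_t$ (whose branch structure on $[\tfrac27,1]$ is parameter-independent, and on $[0,\tfrac27]$ maps onto $[0,\tfrac27]$ by the explicit formula~\eqref{eq:param}), carry each element of $\mathcal P$ onto a union of elements of $\mathcal P$. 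Concretely, inspecting the graphs in Figure~\ref{fig:parametrized}: each of $[\tfrac27,\tfrac37],[\tfrac37,\tfrac47],[\tfrac47,\tfrac57]$ maps onto $[0,1]$, $[\tfrac57,\tfrac{17}{21}]$ and $[\tfrac{17}{21},\tfrac{19}{21}]$ and $[\tfrac{19}{21},1]$ map onto $[0,1]$ as well, and on $[0,\tfrac27]$ the map $\tilde f_t$ surjects onto $[0,\tfrac27]$, an interval which (for any $t$) contains no critical point issues and is mapped in at most one further step onto a larger union of partition intervals by the leftmost branch of $\tilde f_t$. Thus some element $P\in\mathcal P$ satisfies $\tilde f_t^{\,m}(P)=I$ for an $m$ bounded uniformly in $t$.

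\textbf{Step 3: conclude.} Combining Steps 1–2, for every interval $J$ there is $N$ with $\tilde f_t^{\,N}(J)=I$, which is precisely the leo property; this is the definition recalled in Section~\ref{sec:crooked}. \textbf{The main obstacle} I anticipate is bookkeeping in Step 2 for the left part $[0,\tfrac27]$ when $t\in(0,1)$, where the branch structure genuinely depends on $t$ (three branches of slope $\tfrac{21}{2}$ versus two of slope $7$); one must check that no matter where the critical point produced in Step 1 lies, a uniformly bounded number of further iterates reaches a partition interval on which the dynamics is the parameter-independent ``$8$- or $9$-fold'' covering. Once that is verified the rest is a routine application of expansion plus the covering relation among the $\mathcal P$-intervals.
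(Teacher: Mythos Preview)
Your overall strategy (expansion by slope $\ge 7$ until you hit enough structure, then cover $I$) is sound, but you are working much harder than necessary and you have misread Observation~\ref{obs:diamA}. That observation says the diameter at least triples as long as the interval does \emph{not contain two critical points}, not ``contains no critical point''. With the correct reading, Step~1 already delivers an iterate $\tilde f_t^{\,N}(J)$ containing \emph{two} critical points, and then the whole of your Step~2 collapses to a single line: every critical value of $\tilde f_t$ equals $0$ or $1$ (this is immediate from \eqref{eq:param} and the definition on $[\tfrac27,1]$), so an interval containing two consecutive critical points contains a full monotone branch and hence $\tilde f_t^{\,N+1}(J)=I$. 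This is exactly the paper's argument, in two sentences.

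Your Step~2 as written also contains a slip: on $[0,\tfrac27]$ the map $\tilde f_t$ does \emph{not} surject onto $[0,\tfrac27]$ but onto all of $[0,1]$ (look at Figure~\ref{fig:parametrized} or the formula \eqref{eq:param}). More seriously, for $t\in(0,1)$ the critical points on $[0,\tfrac27]$ are at $t\cdot\tfrac{2}{21}$, $t\cdot\tfrac{4}{21}$, etc., which are not partition endpoints, so your claim that every element of $\mathcal P$ maps to a union of elements of $\mathcal P$ is false in general; the ``main obstacle'' you flag is genuine for your route. The paper avoids this entirely by never invoking any Markov structure: the single fact that all critical values lie in $\{0,1\}$ makes the $t$-dependent combinatorics on $[0,\tfrac27]$ irrelevant.
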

\begin{proof}
Fix any nondegenerate interval $[a,b]\subset I$ and any $t\in [0,1]$. By Observation \ref{obs:diamA}, there is $N\in \N$ such that $\tilde{f}^N_{t}([a,b])$ contains two critical points. But then, by the definition of $\tilde{f}_t$,
we obtain that $\tilde{f}^{N+1}_{t}([a,b])=I$.
Indeed $\tilde f_t$ is leo for all $t\in[0,1]$.
\end{proof}

Combining Observation~\ref{obs:slope} and Proposition~\ref{prop:leo} we obtain the following corollary.

\begin{cor}
For every $t\in[0,1]$, the map $\tilde{f}_{t}$ is admissible.
\end{cor}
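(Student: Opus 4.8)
The statement is an immediate consequence of the three preceding results, so the plan is essentially to unwind the definition of an admissible map. Recall that a piecewise linear $f\in C(I)$ is admissible if it is leo and $|f'(t)|\geq 4$ at every $t\in I$ where $f'(t)$ exists. First I would invoke Observation~\ref{obs:periodicpoints} to record that each $\tilde f_t$ is a continuous piecewise linear map, so that the notion of admissibility is meaningful for it. Next I would quote Observation~\ref{obs:slope}, which gives $|\tilde f_t'(x)|\geq 7$ at every point $x\in I$ at which the derivative is defined; since $7>4$, the slope requirement in the definition of admissibility holds for every $t\in[0,1]$. Finally I would cite Proposition~\ref{prop:leo}, which asserts that $\tilde f_t$ is leo for every $t\in[0,1]$. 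Combining these three facts, $\tilde f_t$ is admissible for each $t\in[0,1]$, which is the claim.

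There is no genuine obstacle here: the corollary merely asserts the conjunction of ``piecewise linear'' (Observation~\ref{obs:periodicpoints}), ``all slopes at least $4$'' (Observation~\ref{obs:slope}), and ``leo'' (Proposition~\ref{prop:leo}), which is exactly the definition of admissible. The only place that required any checking is the slope estimate of Observation~\ref{obs:slope} — one inspects the branch slopes $7$, $\tfrac{21}{2}$, $\tfrac{21}{2}$, $-\tfrac{21}{2}$, $-\tfrac{21}{2}$ in \eqref{eq:param} together with the fixed slopes $\pm 7$ and $\pm\tfrac{21}{2}$ on $[\tfrac{2}{7},1]$, all of which have absolute value in $[7,\tfrac{21}{2}]$ — but this has already been carried out upstream, so the proof of the corollary itself is a one-line deduction with no further computation needed.
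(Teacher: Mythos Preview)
Your proposal is correct and matches the paper's approach exactly: the paper simply states that the corollary follows by combining Observation~\ref{obs:slope} (slopes have absolute value at least $7>4$) with Proposition~\ref{prop:leo} (each $\tilde f_t$ is leo). Your additional mention of piecewise linearity from Observation~\ref{obs:periodicpoints} is a harmless completeness check that the paper leaves implicit.
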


Now we will perturb the maps $\{\tilde{f}_{t}\}_{t\in[0,1]}$ with the help of Lemma~\ref{lem:MincUpdt} using the same perturbations for the whole family. In this way we will get a family 
$\{f_{t}\}_{t\in[0,1]}\subset C_{\lambda}(I)$ of continuous maps varying continuously with $t$ as well. 

\begin{lem}\label{lem:betag}
Let $\beta>0$. Let $\tilde{g}_{t}:=\tilde{f}_{t}\circ \lambda_{n_1,k_1}\circ\ldots \circ \lambda_{n_m,k_m}$ for some $k_1,\ldots, k_m\geq 1$ and odd $n_1,\ldots, n_m\geq 1$. There is an integer $N\geq 0$ so that for every $0\leq a\leq b\leq 1$ with $b-a>\beta$ and all $t\in[0,1]$ it holds that $\tilde{g}^N_{t}([a,b])=I$.
\end{lem}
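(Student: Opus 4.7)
The plan is to combine the pointwise leo property of each $\tilde g_t$ with a compactness argument on $t\in[0,1]$, using a ``one extra iterate'' trick to absorb the slight loss of surjectivity that can occur under perturbations of $t$.

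First I would extract a uniform saturation constant. Writing $L:=\lambda_{n_1,k_1}\circ\cdots\circ\lambda_{n_m,k_m}$ so that $\tilde g_t=\tilde f_t\circ L$, the map $L$ is a continuous, piecewise linear surjection with $L(0)=0$ and $L(1)=1$ by Observation~\ref{obs:lambda1}(\ref{obs:lambda1(3)}). Because each factor $\lambda_{n_i,k_i}$ has many turning points, there exist $x_0,x_1\in(0,1)$ with $L(x_0)=0$ and $L(x_1)=1$, and setting
\[
\epsilon_0:=\min\{x_0,\, x_1,\, 1-x_0,\, 1-x_1\}>0,
\]
for every $\epsilon\in[0,\epsilon_0]$ the set $L([\epsilon,1-\epsilon])$ is an interval containing $\{0,1\}$ and hence equals $I$. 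Combined with surjectivity of $\tilde f_t$ this gives the key uniform estimate
\[
\tilde g_t([\epsilon,1-\epsilon])\;=\;\tilde f_t(L([\epsilon,1-\epsilon]))\;=\;\tilde f_t(I)\;=\;I
\qquad\text{for all }t\in[0,1]\text{ and }\epsilon\in[0,\epsilon_0].
\]
(In the degenerate case $m=0$ one replaces $L$ by the identity and uses the $t$-independent preimages $3/7,5/7$ of $1$ and $2/7,4/7$ of $0$ under $\tilde f_t$.)

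Next, for each fixed $t_0\in[0,1]$ I would use that $\tilde g_{t_0}$ is leo, a property inherited from $\tilde f_{t_0}$ by the same type of argument as in Proposition~\ref{prop:leo}, using Observation~\ref{obs:diamA} together with Lemma~\ref{lem:MincUpdt}(\ref{Lcr:3}) to force the diameter of iterates to grow. A standard covering argument yields a finite family $\{K_1,\dots,K_r\}$ of closed subintervals of $I$ of diameter $\beta/2$ with centers spaced at distance $\beta/4$, so that every $[a,b]\subset I$ with $b-a>\beta$ contains some $K_l$, together with an integer $N(t_0)\in\N$ such that $\tilde g_{t_0}^{N(t_0)}(K_l)=I$ for every $l$. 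Since $t\mapsto\tilde g_t^{N(t_0)}$ is continuous in the uniform norm, both $\min_{K_l}\tilde g_t^{N(t_0)}$ and $\max_{K_l}\tilde g_t^{N(t_0)}$ depend continuously on $t$ and equal $0$ and $1$ respectively at $t=t_0$. Hence there exists an open neighborhood $V_{t_0}\ni t_0$ in $[0,1]$ with
\[
\tilde g_t^{N(t_0)}(K_l)\;\supseteq\;[\epsilon_0/2,\,1-\epsilon_0/2]\qquad\text{for all }t\in V_{t_0}\text{ and all }l,
\]
and one further iterate together with the saturation estimate gives $\tilde g_t^{N(t_0)+1}(K_l)=I$ throughout $V_{t_0}$.

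Compactness of $[0,1]$ then reduces the cover $\{V_{t_0}\}_{t_0\in[0,1]}$ to a finite subcover $V_{\tau_1},\dots,V_{\tau_j}$. Setting $N:=\max_i(N(\tau_i)+1)$, for any $t\in[0,1]$ and any $[a,b]\subset I$ with $b-a>\beta$, pick $i$ with $t\in V_{\tau_i}$ and some $K_l$ from the $\tau_i$-covering with $K_l\subset[a,b]$; then $\tilde g_t^{N(\tau_i)+1}([a,b])\supseteq\tilde g_t^{N(\tau_i)+1}(K_l)=I$, and any further iterates up to $N$ preserve the equality with $I$ by surjectivity of $\tilde g_t$. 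The main obstacle I anticipate is precisely the leakage in the continuity step: the exact equality $\tilde g_{t_0}^{N(t_0)}(K_l)=I$ is not preserved under arbitrary small perturbations of $t$ (the extreme values $0$ and $1$ may fail to be attained), and the sole purpose of the saturation constant $\epsilon_0$ is to absorb this drift at the cost of one additional iterate.
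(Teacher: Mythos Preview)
Your argument is correct, but it is considerably more elaborate than the paper's, and in fact the very ingredients you invoke already contain the direct proof. The paper observes that the expansion estimate is \emph{uniform in $t$}: since $\diam(L(A))\geq\diam(A)$ for every subinterval $A$ (Lemma~\ref{lem:MincUpdt}\eqref{Lcr:3} applied $m$ times) and $\diam(\tilde f_t(B))>3\,\diam(B)$ whenever $B$ contains at most one critical point of $\tilde f_t$ (Observation~\ref{obs:diamA}), one gets $\diam(\tilde g_t(A))>3\,\diam(A)$ unless $L(A)$ contains two critical points of $\tilde f_t$, in which case $\tilde g_t(A)=\tilde f_t(L(A))=I$ because all critical values of $\tilde f_t$ lie in $\{0,1\}$. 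Any $N$ with $3^N\beta>1$ therefore works for every $t$ simultaneously, with no compactness or saturation step.

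What your route buys is robustness: the compactness-plus-saturation scheme would apply to any continuous one-parameter family of leo maps possessing a common interior interval that each map sends onto $I$, without requiring a uniform expansion rate. Here, however, you already cite Observation~\ref{obs:diamA} and Lemma~\ref{lem:MincUpdt}\eqref{Lcr:3} to verify leo for each fixed $t_0$, and those estimates are themselves $t$-independent; recognizing this lets you bypass the perturbation and finite-subcover layer entirely.
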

\begin{proof}
    Take any $N$ so that $3^N\beta>1$. By Observation~\ref{obs:diamA} and since $\lambda_{n,k}$ does not shrink intervals (see Lemma~\ref{lem:LemMT} \eqref{Lcr:3}), there is $j<N$ so that $\tilde{g}_{t}^j([a,b])$ contains two critical points of $\tilde f_{t}$. By the definition of $\tilde f_{t}$ we have $\tilde{g}_{t}^{j+1}([a,b])=I$.
\end{proof}

\begin{lem}\label{lem:LemMTadjusted}
Let $\eta$ and $\delta$ be two positive real numbers fixed for the whole family $\{\tilde{g}_{t}\}_{t\in[0,1]}$ where $\tilde{g}_{t}:=\tilde{f}_{t}\circ \lambda_{n_1,k_1}\circ\ldots \circ \lambda_{n_m,k_m}$ for some $k_1,\ldots, k_m\geq 1$ and odd $n_1,\ldots, n_m\geq 1$. Then there is a positive integer $N$ such that for every $t\in [0,1]$ there exists an admissible map $\tilde{G}_{t}:I\to I$ so that $\tilde{G}_{t}^N$ is $\delta$-crooked and $\rho(\tilde{G}_{t},\tilde{f}_{t})<\eta$. Moreover, $\tilde{G}_t\in C_{\lambda}(I)$ and $\tilde{G}_t=\tilde{g}_t\circ \lambda_{n_{m+1},k_{m+1}}$ for some $k_{m+1}\geq 1$ and odd $n_{m+1}\geq 1$.
\end{lem}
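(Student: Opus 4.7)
The plan is to set $\tilde G_t := \tilde g_t \circ \lambda_{n_{m+1},k_{m+1}}$ and to extract a single pair of indices $(n_{m+1}, k_{m+1})$ and a single iterate count $N$ that work simultaneously for every $t\in [0,1]$ by running the Minc--Transue scheme of \cite[Lemma]{MT} (as reproduced in the proof of Lemma~\ref{lem:LemMT}) uniformly in the parameter $t$. The key point is that every quantitative input of that scheme either (i) depends only on the perturbation $\lambda_{n_{m+1},k_{m+1}}$, which is $t$-independent, or (ii) depends on $\tilde g_t$ only through admissibility, a leo expansion rate and a modulus of continuity, all of which I will extract uniformly in $t$ from the compactness of the parameter interval.

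First I would assemble the three uniform data. For \emph{equicontinuity}, the map $(t,x)\mapsto \tilde f_t(x)$ is continuous on the compact box $[0,1]\times I$ by Observation~\ref{obs:periodicpoints}, and each $\lambda_{n_i,k_i}$ is Lipschitz on $I$, so $(t,x)\mapsto \tilde g_t(x)$ is jointly continuous on $[0,1]\times I$; consequently $\{\tilde g_t\}_{t\in [0,1]}$ is equicontinuous, i.e.\ there exists $\xi=\xi(\eta)>0$ such that $|x-y|<\xi$ implies $|\tilde g_t(x)-\tilde g_t(y)|<\eta$ for every $t\in[0,1]$. For \emph{uniform admissibility}, the slope of $\tilde f_t$ is at least $7$ by Observation~\ref{obs:slope} and each $\lambda_{n_i,k_i}$ has constant slope of absolute value $\scr[n_i]+\scr[n_i-1]\geq 1$ by Observation~\ref{obs:lambda1}\eqref{obs:lambda1(1)}, so $\tilde g_t$ is piecewise linear with slope at least $7>4$ wherever differentiable; combined with Proposition~\ref{prop:leo} and the fact that precomposing a leo map with a surjection preserves the leo property, each $\tilde g_t$ is admissible, and the same holds for $\tilde G_t$. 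For a \emph{uniform leo rate}, Lemma~\ref{lem:betag} produces, for any $\beta>0$, a single $N_0=N_0(\beta)$ with $\tilde g_t^{N_0}([a,b])=I$ whenever $b-a>\beta$, uniformly in $t$; the same remains true for $\tilde G_t$ because $\lambda_{n_{m+1},k_{m+1}}$ does not decrease diameters by Lemma~\ref{lem:MincUpdt}\eqref{Lcr:3}.

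Given these uniform inputs, I would now choose $(n_{m+1},k_{m+1})$ large enough that, on the one hand, $\rho(\lambda_{n_{m+1},k_{m+1}},\mathrm{id})<\xi$ via Lemma~\ref{lem:MincUpdt}\eqref{Lcr:1}, which gives $\rho(\tilde G_t,\tilde g_t)<\eta$ uniformly in $t$ (the cumulative bookkeeping of the summable budget sketched at the start of Section~\ref{sec:arc} is what eventually transfers this into the displayed $\rho(\tilde G_t,\tilde f_t)<\eta$ bound when the lemma is applied inductively); and, on the other hand, the crookedness scale $\gamma=1/(n_{m+1}+k_{m+1}-1)$ and the crookedness range $\eps=(n_{m+1}-1)/(n_{m+1}+k_{m+1}-1)$ from Lemma~\ref{lem:MincUpdt} are as small and as close to $1$ as the target $\delta$ and the uniform admissibility/leo constants of $\{\tilde g_t\}$ demand in the Minc--Transue argument.

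Finally I would transcribe the proof of Lemma~\ref{lem:LemMT} (equivalently, the Minc--Transue \cite[Lemma]{MT}) verbatim with $f$ replaced by an arbitrary $\tilde g_t$, observing that every quantitative step goes through with the \emph{same} constants for all $t$: the internal crookedness, non-shrinking, covering and neighbourhood estimates used from Lemma~\ref{lem:MincUpdt}\eqref{Lcr:2}--\eqref{Lcr:6} depend only on $\lambda_{n_{m+1},k_{m+1}}$; the expansion-and-spreading steps consume only admissibility, the leo rate $N_0(\beta)$ and the equicontinuity modulus $\xi$, all of which are uniform in $t$. Consequently the Minc--Transue construction produces a single integer $N$ for which $\tilde G_t^N$ is $\delta$-crooked for every $t\in[0,1]$. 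The \emph{moreover} clause is automatic: $\tilde G_t=\tilde g_t\circ\lambda_{n_{m+1},k_{m+1}}$ by construction, and since $\tilde g_t\in C_\lambda(I)$ (being an iterated composition of maps from $C_\lambda(I)$ by Proposition~\ref{prop:LebPres}) and $\lambda_{n_{m+1},k_{m+1}}\in C_\lambda(I)$, we get $\tilde G_t\in C_\lambda(I)$. I expect the main obstacle to be the verification that the quantitative Minc--Transue estimates really do pass through once $f$ is promoted to a parameter family; this is precisely where the uniform equicontinuity of $\{\tilde g_t\}$ and the $t$-independent leo rate of Lemma~\ref{lem:betag} become indispensable.
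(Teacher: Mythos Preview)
Your overall strategy matches the paper's: both run the Minc--Transue scheme with $\tilde g_t$ in the role of $f$ and $\lambda_{n_{m+1},k_{m+1}}$ in the role of $g$, verifying that every constant can be chosen uniformly in $t$. Your uniform leo rate (via Lemma~\ref{lem:betag}), uniform upper slope bound (via equicontinuity), and admissibility are exactly the paper's ingredients.

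However, there is one genuine gap. The Minc--Transue argument requires, in addition to the inputs you list, a constant $\alpha>0$ such that any interval of length $<\alpha$ contains at most one critical point of $\tilde g_t$; the parameter $\gamma$ of the perturbation must then satisfy $\gamma<\alpha$. This constant is \emph{not} a consequence of equicontinuity, admissibility, or the leo rate: a family can enjoy all three and still have intervals of monotonicity shrinking to zero. And this is precisely what happens here, since as $t\to 0$ the leftmost interval of monotonicity of $\tilde f_t$ collapses, so two critical points of $\tilde f_t$ (hence, after pulling back through $\Lambda=\lambda_{n_1,k_1}\circ\cdots\circ\lambda_{n_m,k_m}$, of $\tilde g_t$) can be arbitrarily close. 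The paper's sketch singles out exactly this point and resolves it using the special structure of the family: the ``small'' critical points of $\tilde f_t$ all have critical value in $\{0,1\}$, and the maps $\lambda_{n_i,k_i}$ are monotone near $0$ and $1$ (since $\lambda_{n,k}(0)=0$, $\lambda_{n,k}(1)=1$), which allows one to manufacture a uniform $\alpha$ for $\tilde g_t=\tilde f_t\circ\Lambda$ despite the degeneration in $\tilde f_t$. Without this observation your plan to ``transcribe verbatim'' cannot produce a single pair $(n_{m+1},k_{m+1})$ valid for all $t$, because the constraint $\gamma<\alpha$ would force $\gamma$ (and hence $n_{m+1}+k_{m+1}$) to depend on $t$.
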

{
\begin{proof}[Sketch of proof.]
The proof is a direct adaptation of the proof of Lemma in \cite{MT}. Let us explain the preparatory part of the proof. The role of $f$ in the Lemma from \cite{MT}
is played by the maps $\tilde{g}_{t}$. Observe that $\lambda_{n_1,k_1}\circ\ldots \circ \lambda_{n_m,k_m}$ remain unchanged in the formula for $\tilde{g}_{t}$ for each $t$, so let us fix $\alpha'>0$ which is the length of the shortest interval of monotonicity of $\lambda_{n_1,k_1}\circ\ldots \circ\lambda_{n_m,k_m}$. Let us fix $\zeta>0$ which is the upper bound of the slope of the map $\lambda_{n_1,k_1}\circ\ldots \circ \lambda_{n_m,k_m}$ and let $\zeta'>0$
be such that $\tilde{f}_{t}$ has at most one critical point in any interval of length at most $\zeta'$. We can choose uniform $\zeta'$ for each $\tilde{f}_{t}$ because all intervals of monotonicity except the left-most are ``uniformly large''.
If we take $\alpha<\min\{\alpha'/\zeta,\zeta'\}$ and any interval $J\subset I$ with $\diam(J)<\alpha$ then $\diam (\tilde{f}_{t}(J))<\alpha'$ so the interval $\tilde{f}_{t}(J)$ contains at most one critical point of the map $\lambda_{n_1,k_1}\circ\ldots \circ\lambda_{n_m,k_m}$. If $J$ does not contain a critical point of $\tilde{f}_{t}$ then $\tilde{g}_{t}$ has at most one critical point in $J$.
But if $J$ contains a critical point of $\tilde{f}_{t}$, say $c\in J$, then it follows from the definition of the maps $\tilde{f}_{t}$ that $\tilde{f}_{t}(c)\in \{0,1\}$. Then $\tilde{f}_{t}(J)\subset [0,\alpha')\cup (1-\alpha',1]$ and in this set 
$\lambda_{n_1,k_1}\circ\ldots \circ \lambda_{n_m,k_m}$ does not have a critical point, so again $\tilde{g}_{t}$ has a unique critical point in $J$.

By the above explanation, similarly as in Lemma from \cite{MT}, if we take any $b - a < \alpha$, then between $a$ and $b$ there is a point $c=c(t)$ such that each $\tilde{g}_{t}$ is linear on both intervals $[a, c]$ and $[c, b]$. All the maps $\tilde{g}_{t}$ have slopes bounded from the above by the same constant, call it $s$, since slopes of all $\tilde{f}_{t}$ are uniformly bounded from the above and all the maps in the composition are piecewise linear. Also, Lemma~\ref{lem:betag}
provides the same $N$ for all $\tilde{g}_{t}$ which plays the role of Proposition~6 in \cite{MT}. This defines required bounding constants $\eps<\eta/s$ and $\gamma< \min\{\alpha, s^{-n} , \eps/4, \delta s^{-n}  /5\}$ from the proof of Lemma in \cite{MT}.

The role of $g$ in the proof of the Lemma is played by $\lambda_{n_{m+1},k_{m+1}}$, where sufficiently large values of $n_{m+1},k_{m+1}$ are deduced from Lemma~\ref{lem:MincUpdt} similarly to the application of Proposition~5 for the choice of $g$ in \cite{MT} (using the corresponding $\gamma$ and $\eps$).

After these preparations, the rest of the proof is performed by following exactly the same argument as in Lemma of \cite{MT}, with the only difference that instead of Proposition 5 there, we apply analogous properties of $\lambda_{n_{m+1},k_{m+1}}$ provided by Lemma~\ref{lem:MincUpdt}.
\end{proof}}

\begin{thm}
There exists a family $\{f_{t}\}_{t\in[0,1]}\subset \mathcal{T}$ of maps continuously varying with $t$.
\end{thm}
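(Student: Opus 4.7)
The plan is to construct the family $\{f_t\}_{t\in[0,1]}$ as a uniform limit in $t$ of a sequence of families $\{g_t^{(j)}\}_{t\in[0,1]}\subset C_{\lambda}(I)$, obtained by iteratively composing the starting family $\{\tilde{f}_t\}$ with the Lebesgue-preserving perturbation maps $\lambda_{n,k}$ supplied by Lemma~\ref{lem:LemMTadjusted}. Crucially, since the chosen $\lambda_{n_j,k_j}$ does not depend on $t$, the continuous dependence on $t$ is preserved at every stage, and $C_\lambda(I)$-membership is preserved because both $\tilde{f}_t$ and all $\lambda_{n_j,k_j}$ lie in $C_\lambda(I)$ (by Observation~\ref{obs:periodicpoints} and Proposition~\ref{prop:LebPres}), and $C_\lambda(I)$ is closed in $C(I)$.

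More precisely, set $g_t^{(0)}:=\tilde{f}_t$ and inductively apply Lemma~\ref{lem:LemMTadjusted}: at step $j\geq 1$, feed the whole family $\{g_t^{(j-1)}\}$ together with target crookedness $\delta_j:=1/j$ and a perturbation bound $\eta_j$ to be specified below, obtaining $g_t^{(j)}=g_t^{(j-1)}\circ \lambda_{n_j,k_j}$ and an integer $N_j$ such that $(g_t^{(j)})^{N_j}$ is $\delta_j$-crooked for every $t\in[0,1]$, with $\rho(g_t^{(j)},g_t^{(j-1)})<\eta_j$ for every $t$. Each $g_t^{(j)}$ is piecewise linear with a finite uniform slope bound $L_j$ (the product of the slope of $\tilde{f}_t$ and the slopes of the successive $\lambda_{n_i,k_i}$), and the crucial point is that $L_j$ and $N_j$ are already determined once we have fixed steps $1,\dots,j$. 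Hence I can and will pick $\eta_{j+1}$ \emph{after} all earlier data is known, small enough so that
\[
\sum_{\ell>j}\eta_\ell \;<\;\min\!\Big\{2^{-j},\ \tfrac{1}{4j L_j^{N_j}}\Big\}
\]
for each $j$ (this is feasible by making each $n_{j+1},k_{j+1}$ sufficiently large in Lemma~\ref{lem:MincUpdt}, since $\rho(\lambda_{n,k},\mathrm{id})=O(1/k)$).

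With this choice the sequence $\{g_t^{(j)}\}_{j}$ is Cauchy in the sup norm \emph{uniformly in $t$}, so $f_t:=\lim_j g_t^{(j)}$ exists, and the map $t\mapsto f_t$ is continuous as a uniform limit of continuous $t$-parametrized families. Moreover $f_t\in C_\lambda(I)$ since $C_\lambda(I)$ is closed. To verify $f_t\in\mathcal{T}$, fix any $\delta>0$ and pick $j$ with $3/j<\delta$. Using that each $g_t^{(j)}$ has Lipschitz bound $L_j$, a standard chain-rule/telescoping estimate gives
\[
\rho\bigl(f_t^{N_j},(g_t^{(j)})^{N_j}\bigr)\;\leq\;\rho(f_t,g_t^{(j)})\cdot\frac{L_j^{N_j}-1}{L_j-1}\;\leq\;L_j^{N_j}\sum_{\ell>j}\eta_\ell\;<\;\frac{1}{4j},
\]
so by Lemma~\ref{lem:crooked} the map $f_t^{N_j}$ is $(1/j+1/(2j))$-crooked, hence $\delta$-crooked. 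Since $\delta$ was arbitrary, $f_t\in\mathcal{T}$.

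The main obstacle is the last step: the slope bounds $L_j$ grow rapidly with $j$ because each composition with a $\lambda_{n_j,k_j}$ multiplies slopes by $\scr[n_j]+\scr[n_j-1]$, and the iterate count $N_j$ is provided by Lemma~\ref{lem:LemMTadjusted} without any effective control. What saves the construction is precisely the freedom, inherent in Lemma~\ref{lem:MincUpdt}, to make the next perturbation $\lambda_{n_{j+1},k_{j+1}}$ as $C^0$-close to the identity as desired by choosing $k_{j+1}$ large; this lets us pick $\eta_{j+1}$ \emph{after} $L_j$ and $N_j$ are fixed, thereby taming the iterate-expansion $L_j^{N_j}$ and ensuring that the crookedness established at stage $j$ is inherited by the eventual limit $f_t$.
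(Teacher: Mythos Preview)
Your proposal is correct and follows essentially the same strategy as the paper: iteratively apply Lemma~\ref{lem:LemMTadjusted} with a single, $t$-independent perturbation $\lambda_{n_j,k_j}$ at each stage, then pass to the uniform limit. The paper phrases the convergence control more softly, invoking the open sets $A_k$ from the proof of Theorem~\ref{thm:UniLimPresLeb} and the $\delta/4$-ball argument via Lemma~\ref{lem:crooked} to guarantee that sufficiently fast convergence keeps the limit inside $\mathcal{T}=\bigcap_k A_k$; you instead make this quantitative by tracking the uniform Lipschitz bound $L_j$ and the iterate count $N_j$, and choosing $\eta_{j+1}$ a posteriori so that the telescoping estimate $\rho(f_t^{N_j},(g_t^{(j)})^{N_j})\le L_j^{N_j}\sum_{\ell>j}\eta_\ell$ is small enough for Lemma~\ref{lem:crooked} to apply. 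Both routes exploit exactly the same freedom---that $\rho(\lambda_{n,k},\mathrm{id})$ can be made arbitrarily small after all previous data is fixed---so the underlying mechanism is identical; your version simply spells out what ``ensuring sufficiently fast convergence'' means in the paper's proof.
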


\begin{proof}
 The procedure we take is the same as in the proof of Theorem~\ref{thm:UniLimPresLeb} however we apply the same perturbations for the whole family $\{\tilde{f}_{t}\}_{t\in[0,1]}$ on every step. To get crookedness and leo on every step we will need to repeatedly use Lemma~\ref{lem:LemMTadjusted}.
 Recall that for any $k\geq 0$ the set $A_k\subset C_\lambda(I)$ is contained in the set of maps $f$ such that $\tilde{f}^{m}$ is $(1/k-\delta)$-crooked for some $m$ and some sufficiently small $\delta>0$.
Starting with $\{\tilde{f}_{t}\}_{t\in[0,1]}$ we use Lemma~\ref{lem:LemMTadjusted} directly to obtain maps $\{\tilde{F}_{t}\}_{t\in[0,1]}\in A_1$.
But if $\{\tilde{F}_{t}\}_{t\in[0,1]}\in A_1$ and $m,\delta$ are constants from the definition of $A_1$, then by Lemma~\ref{lem:crooked} we have $B(\{\tilde{F}_{t}\}_{t\in[0,1]},\delta/4)\subset A_1$. For the second step we take the family $\{\tilde{F}_{t}\}_{t\in[0,1]}$. 
Proceeding as in the rest of the proof of Theorem~\ref{thm:UniLimPresLeb}, ensuring sufficiently fast convergence, we obtain in the intersection of sets $A_k$ the family $\{f_{t}\}_{t\in[0,1]}\subset \mathcal{T}$ of continuous maps varying with $t$.
\end{proof}

Now we will briefly describe standard parametrized BBM construction for the family $\{f_{t}\}_{t\in[0,1]}$, see \cite{BdCH} for more detail.
Let $D\subset \mathbb{R}^2$ be a topological disk, $I\subset D$ is a {\em boundary retract}; i.e. there is a continuous map 
$\alpha:\partial D\times [0,1]\to D$ 
which decomposes $D$ into a continuously varying family of arcs 
$\{\alpha(x,\cdot)\}_{x\in \partial D}\subset \mathcal C(I, D)$, so that $\alpha(x,\cdot)(I)$ are pairwise disjoint except perhaps at the endpoints $\alpha(x,1)$, where one requires that $\alpha(x,1)\in I$. 
We can then associate a {\em retraction} $r\colon D \to I$ defined by $r(\alpha(x,s))= \alpha(x,1)$ for every $x\in \partial D$ corresponding to the given decomposition. The map is boundary retract, but we need
to maintain the disc, so we will collapse only the ``inner half'' of it (see definition of $R$ below).
Recall also, that a continuous map between two compact metric spaces is called a \emph{near-homeomorphism}, if it is a uniform limit of homeomorphisms.

Having the above decomposition in arcs we define \emph{smash} $R:D\to D$ as a near-homeomorphism so that:
$$
R(\alpha(x,s))=
\begin{cases}
\alpha(x,2s);& s\in [0,1/2],\\
\alpha(x,1); & s\in [1/2,1].
\end{cases}
$$

We define the \emph{unwrapping} of $\{f_{t}\}_{t\in[0,1]}\subset \mathcal{T}$ as a continuously varying family $\bar f_{t}\colon D\to D$ of orientation-preserving homeomorphisms so that for all $t$:
\begin{enumerate}[(i)]
\item\label{unwrapping:i} $\supp \bar{f}_{t}\subset \{\alpha(x,s); s\geq 1/2\}$,
\item\label{unwrapping:ii} $R\circ \bar{f}_{t}|_{I}= f_{t}$,
\end{enumerate}
For the purpose of easier discussion afterwards let us fix the unwrappings to be the ``rotated graphs'' of the corresponding functions, following \cite{BdCH}, and we will call such unwrappings {\em standard}.
 We additionally require that $\bar{f}_0$ preserves a horizontal radial arc that connects $0\in I$ to $\partial D$ (which we can require by Observation~\ref{obs:periodicpoints}). Also, we require (again using Observation~\ref{obs:periodicpoints}) that $\bar{f}_1$ interchanges horizontal radial arcs that connect $0\in I$ to $\partial D$ and $1\in I$ to $\partial D$.\\
As a consequence of \eqref{unwrapping:i} we obtain that for all $x\in \partial D$ and $s\in [0,1/2]$ we have $\bar{f}_{t}(\alpha(x,s))=\alpha(x,s)$.
 At this point we would like to stress we do not claim that all unwrappings associated to some map $f\in \mathcal{T}$ are  dynamically equivalent (see Definition~\ref{def:equivalent}).
Now set $H_{t}=R\circ \bar{f}_{t}$ which is a near-homeomorphism. By Brown's theorem \cite{Br}, $\hat D_{t}:=\underleftarrow{\lim}(D,H_{t})$ is a closed topological disk; i.e. there exists a homeomorphism $h_{t}\colon \hat D_{t}\to D$. Let $\Phi_t:=h_{t}\circ \hat H_{t}\circ h_{t}^{-1}\colon D\to D$ and let $\Lambda_t:=h_{t}(\hat I_t)$. It follows from \cite{BM} that $\Phi_t|_{\Lambda_t}$ is topologically conjugate to $\hat{f}_{t}: \underleftarrow{\lim} (I, f_{t}) \to \underleftarrow{\lim} (I, f_{t})$ for every $t\in [0,1]$.  Moreover, it follows from the choice of unwrapping that every point from the interior of $\hat D_t$ is attracted to $\hat I_{t}$, therefore, $\hat I_{t}$ is a global attractor for $\hat H_{t}$ and thus $\Lambda_t$ is a global attractor for $\Phi_t$ as well. By Theorem 3.1 from \cite{3G-BM} $\{\Phi_t\}_{t\in[0,1]}$ vary continuously with $t$ and the attractors $\{\Lambda_t\}_{t\in [0,1]}$ vary continuously in Hausdorff metric.

To a non-degenerate and non-separating continuum $K\subset D\setminus \partial D$ we can associate the {\em circle of prime ends} $\mathbb{P}$ as the compactification of $D\setminus K$.
If $h\colon\mathbb{R}^2\to \mathbb{R}^2$ preserves orientation and $h(K)=K$, $h(D)=D$ then $h$ induces an orientation preserving homeomorphism $\tilde{H}\colon\mathbb{P}\to\mathbb{P}$, and therefore it gives a natural \emph{prime ends rotation number}. 
In what follows we will also need the following result by Barge~\cite{Barge}. 

\begin{lem}[Proposition 2.2 in \cite{Barge}]\label{Barge}
Suppose that $\{\Psi_t\}_{t\in[0,1]}$ is a family of orientation-preserving  homeomorphisms on a topological disk $D\subset \mathbb{R}^2$ continuously varying with $t$. 
For every $t\in[0,1]$ let $K_t \subset \Int{D}$ be a 
non-degenerate sphere non-separating continuum, invariant under $\Psi_t$,
and assume that $\{K_t\}_{t\in[0,1]}$ vary continuously with $t$ in the Hausdorff metric. 
Then the prime ends rotation numbers vary continuously with $t\in [0,1]$.   	
\end{lem}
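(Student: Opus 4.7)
The plan is to transport the boundary dynamics of each $\Psi_t$ onto a fixed circle via the Riemann mapping theorem and then invoke the classical continuity of the Poincar\'e rotation number in the $C^0$ topology on $\mathrm{Homeo}^+(S^1)$.

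First, since each $K_t$ is a non-degenerate non-separating continuum lying in $\Int D$, the set $U_t := \widehat{\mathbb{R}^2} \setminus K_t$ is a simply connected domain on the sphere, and the circle of prime ends of $U_t$ is canonically identified with $\partial \mathbb{D}$ via any Riemann map $\phi_t \colon \mathbb{D} \to U_t$. I would normalise by prescribing the images of $0$ and of a second reference point — such reference points can be chosen in a fixed compact set disjoint from $\bigcup_{t\in[0,1]} K_t$, which exists because $\{K_t\}$ is compact in the Hausdorff metric and each $K_t$ is a proper subset of $\Int D$. Conjugation then gives a homeomorphism $\tilde{\Psi}_t := \phi_t^{-1} \circ \Psi_t \circ \phi_t$ of $\mathbb{D}$ whose extension to $\partial \mathbb{D}$ realises the prime-ends homeomorphism induced by $\Psi_t$; in particular, its Poincar\'e rotation number equals the prime-ends rotation number of $\Psi_t$ on $K_t$.

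Second, I would apply the Carath\'eodory kernel convergence theorem: Hausdorff convergence $K_{t_n} \to K_t$ together with the fixed normalisation implies kernel convergence $U_{t_n} \to U_t$ at the chosen base points, hence $\phi_{t_n} \to \phi_t$ locally uniformly on $\mathbb{D}$ and $\phi_{t_n}^{-1} \to \phi_t^{-1}$ locally uniformly on $U_t$. Combined with the uniform convergence $\Psi_{t_n} \to \Psi_t$ on $D$, this yields locally uniform convergence of the interior conjugates $\tilde{\Psi}_{t_n} \to \tilde{\Psi}_t$ on $\mathbb{D}$. The main obstacle, which I expect to be the technical heart of the argument, is to upgrade this interior convergence to uniform convergence of the boundary extensions on $S^1 = \partial\mathbb{D}$. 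This requires an equicontinuity estimate for $\tilde{\Psi}_{t_n}$ near $\partial \mathbb{D}$ that is uniform in $n$; such an estimate can be obtained by noting that the $\tilde{\Psi}_{t_n}$ form a uniformly bounded family of orientation-preserving self-homeomorphisms of $\overline{\mathbb{D}}$ that move each point of the prescribed compact interior set only slightly, so a normal-family argument extracts a boundary limit; monotonicity and injectivity on $S^1$, plus the interior identification, force this limit to coincide with $\tilde{\Psi}_t|_{S^1}$.

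Finally, once the boundary homeomorphisms $\tilde{\Psi}_{t_n}|_{S^1}$ are shown to converge uniformly to $\tilde{\Psi}_t|_{S^1}$, the classical continuity of the Poincar\'e rotation number under $C^0$ convergence of orientation-preserving circle homeomorphisms forces the prime-ends rotation numbers to converge. Since this reasoning applies along every sequence $t_n \to t$ in the compact parameter space $[0,1]$, the prime-ends rotation number depends continuously on $t$, which is the desired conclusion.
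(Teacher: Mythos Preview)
The paper does not prove this lemma at all; it is quoted verbatim as Proposition~2.2 from Barge's paper \cite{Barge} and used as a black box in the proof of Theorem~\ref{lem:BBM1}. So there is no in-paper argument to compare against.

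On your proposal itself: the overall strategy --- uniformise via Riemann maps, invoke Carath\'eodory kernel convergence, then use $C^0$ continuity of the Poincar\'e rotation number --- is the natural one and matches the standard route. Two points, however, deserve attention. First, the prime end circle in the paper is that of the annular region $D\setminus K_t$, whereas you work with $\widehat{\mathbb{R}^2}\setminus K_t$; the prime end structure near $K_t$ is the same, but $\Psi_t$ is only defined on $D$, so you must either extend it to the sphere (say by the identity outside $D$, using that $\Psi_t|_{\partial D}$ is already controlled) or uniformise the annulus instead and carry the second boundary component along. Second, and more seriously, the step you flag as ``the technical heart'' --- passing from locally uniform convergence of $\tilde\Psi_{t_n}$ on $\mathbb{D}$ to uniform convergence of the boundary maps on $S^1$ --- is not established by what you wrote. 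The phrase ``normal-family argument'' does not apply to homeomorphisms, and uniform boundedness plus interior convergence does not by itself give equicontinuity near $\partial\mathbb{D}$ when the boundary continua $K_t$ are as wild as pseudo-arcs. What one actually needs is an argument at the level of crosscuts: a prime end is an equivalence class of chains of crosscuts, Hausdorff convergence $K_{t_n}\to K_t$ lets one transport crosscut chains between the complements with controlled distortion, and this is what forces the induced circle maps to converge. Your sketch gestures at the right conclusion but does not supply this mechanism, so as written the proof has a genuine gap at exactly the place you identified as delicate.
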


Finally, let us define how we distinguish the embeddings from the dynamical perspective. 
In what follows we generalize the definition from \cite{BdCH} of equivalence of embeddings.

\begin{defn}\label{def:equivalent}
Let $X$ and $Y$ be metric spaces. Suppose that $F:X\to X$ and $G:Y\to Y$ are homeomorphisms and $E:X\to Y$ is an embedding. If $E\circ F=G\circ E$ we say that the embedding $E$ is a {\em dynamical embedding} of $(X,F)$ into $(Y,G)$. If $E$, resp. $E'$, are dynamical embeddings of $(X,F)$ resp. $(X',F')$ into $(Y,G)$, resp. $(Y',G')$, and there is a homeomorphism $H:Y\to Y'$ so that $H(E(X))=E'(X')$ which conjugates $G|_{E(X)}$ with $G'|_{E'(X')}$ we say that the embeddings $E$ and $E'$ are {\em dynamically equivalent}.
\end{defn}
\begin{rem}
In our case $Y=Y'=\mathbb{R}^2$ and $X,X'$ are pseudo-arcs (in particular plane non-separating continua). Thus, the dynamical equivalence from Definition~\ref{def:equivalent} induces a conjugacy on the circles of prime ends without requiring that $H$ conjugates $G$ with $G'$ on all $\mathbb{R}^2$.
\end{rem}

We will also use the following definition.

\begin{defn}
We say that a point $x\in K\subset \mathbb{R}^2$ is {\em accessible} if there exists an arc $A\subset \mathbb{R}^2$ such that $A\cap K=\{x\}$ and $A\setminus \{x\}\subset \mathbb{R}^2\setminus K$.
\end{defn}

Now let us prove the main theorem of this section.

\begin{proof}[Proof of Theorem \ref{lem:BBM1}]
Items (a) and (b) follow directly from Theorem 3.1 of \cite{3G-BM}.

Let us argue that $\Lambda_0=h(\hat I_0)$ has an accessible point $h_{0}((0,0,\ldots))$ fixed under $\Phi_0$. 
We choose a horizontal radial arc $Q_0\subset D$ that has an endpoint in $0\in I\subset D$. Note that by the definition of $H_0$ it holds that $H_0(Q_0)=Q_0$ and $H_0|_{Q_0}$ is a near-homeomorphism. 
Thus, $J_0:=\underleftarrow{\lim}(Q_0,H_0|_{Q_0})$ is an arc by the result of Brown \cite{Bro}, as it is an inverse limit of arcs with near-homeomorphisms for bonding maps. Therefore, $\Phi_0(h_0(J_0))=h_0(J_0)$ and thus $\Lambda_0$ has an accessible fixed point which is connected to $\partial D$ by an invariant arc and thus it defines a prime end $P_0\in \mathbb{P}_0$ on the corresponding circle of prime ends $\mathbb{P}_0$. Since $\Lambda_0$ is the pseudo-arc and thus an indecomposable plane non-separating continuum, Theorem 5.1 from \cite{Bre} implies that $P_0$ is a fixed point of the induced homeomorphism $\tilde{H}_0:\mathbb{P}_0\to\mathbb{P}_0$. Therefore, the prime ends rotation number of $\tilde{H}_0$ is $0$.\\
Now let us show that the rotation number of the induced prime end homeomorphism $\tilde{H}_1:\mathbb{P}_1\to\mathbb{P}_1$ corresponding to $H_1$ is $1/2$. Similarly as above, we see that there are two accessible points $p_1,p'_1\in\Lambda_1$ such that $H_1(p_1)=p'_1$ and $H^2_1(p_1)=p_1$. Therefore, there are corresponding prime ends $P_1,P'_1\in \mathbb{P}_1$. By Theorem 3.2. from \cite{Bre}, if a point from an indecomposable continuum is accessible it corresponds to a unique prime end, thus $P_1,P'_1$ are the only prime ends corresponding to accessible points $p_1$ and $p'_1$ respectively. Furthermore, Theorem 5.1 from \cite{Bre} implies that $\tilde{H}^2_1(P_1)=P_1$ and $\tilde{H}^2_1(P'_1)=P'_1$. We only need to exclude that $\tilde{H}_1(P_1)=P_1$ ($\tilde{H}_1(P'_1)=P'_1$). But if $\tilde{H}_1(P_1)=P_1$ ($\tilde{H}_1(P'_1)=P'_1$), the definition of the map $\tilde{H}_1$ would imply that $p_1$ ($p'_1$) and $H_1(p_1)$ ($H_1(p'_1)$) have the same associated equivalence classes of sequences of crosscuts which leads to a contradiction.
This means that the prime ends rotation number associated to the homeomorphism $\Phi_1$ is $1/2$. Applying Lemma~\ref{Barge} we obtain item (c). 

To show item (d) it is enough to use item (c) and observe that if $\Lambda_t$ and $\Lambda_{t'}$ for $t\neq t'$ are embedded dynamically equivalently, then also the prime end homeomorphisms $\tilde H_t$ and  $\tilde H_{t'}$ associated to $\Lambda_t$ and $\Lambda_{t'}$ are conjugated (because the associated equivalence classes of sequences of crosscuts are interchanged by the conjugating homeomorphism) which implies the equality of the associated prime ends rotation numbers. 
\end{proof}

\begin{rem}
While the embeddings from Theorem~\ref{lem:BBM1} are different dynamically we can not easily claim that they are different also from the topological point of view. On the other hand, result (d) from Theorem~\ref{lem:BBM1} implies that the parameter space $[0,1]$ is indeed not degenerate. It would be interesting to know how boundary dynamics of the family $\{\Lambda_t\}_{t\in [0,1]}$ looks like precisely (i.e. to understand the sets of accessible points and the prime ends structure), however we do not delve in that aspect of research in this work. 
\end{rem}

\section{Measure-theoretic BBM embeddings}\label{sec:BBM2}

 Note that the set $C_{\lambda}(I)$ is a complete space in $C(I)$ with the supremum metric. However, the space $C_{\lambda}(I)$ is not equicontinuous and thus by Arzel\'a-Ascoli theorem $C_{\lambda}(I)$ is not compact. Therefore, we cannot apply the parametrized BBM construction from \cite{3G-BM} directly to get a parametrized family of planar homeomorphisms varying continuously with the parameter (we could apply construction from \cite{3G-BM} for some compact subset of $C_{\lambda}(I)$ but a priori only from the topological perspective). Thus this section can be viewed as a generalization of the preceding section with the additional measure-theoretic ingredients. 
 
 \subsection{Measure-theoretic preliminaries}\label{subsec:MTpreliminaries}
In this subsection we give some measure-theoretic results that are required later in the construction.
Suppose $X$ is a compact metric space and that $f\colon X\to X$ is continuous and onto and recall that we denote by $\hat X:=\underleftarrow{\lim}(X,f)$ and by $\pi_n:\hat X\to I$ the coordinate projections maps. Recall also that $\mathcal{B}(X)$ denotes the $\sigma$-algebra of Borel sets in $X$. 
First we will need the following standard result. 

\begin{thm}[Theorem 3.2, p. 139 from \cite{Parth}]\label{thm:partha}
Suppose $(X, \mathcal B(X))$ is a separable Borel space and that $f\colon X \to X$ is onto and $\mathcal B(X)$-measurable. Let $\mathcal B(\hat X)$ be the smallest $\sigma$-algebra on $\hat X$ such that all the projection maps $\pi_i$ are measurable. If $\{\mu_n\}_{n\in \N_0}$ is a sequence of probability measures on $\mathcal B(X)$ such that $\mu_n(A) = \mu_{n+1}(f^{-1}(A))$ 
for all $A\in \mathcal B(X)$, then there exists a unique probability measure $\hat\mu$ on $\mathcal B(\hat X)$ 
such that $\hat\mu(\pi^{-1}_n(A))=\mu_n(A)$
for all $A\in \mathcal B(X)$ and each $n\in \N_0$.
\end{thm}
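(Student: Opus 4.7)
The plan is to construct $\hat\mu$ by Kolmogorov--Carath\'eodory extension from an algebra of cylinder sets. Call $C\subset\hat X$ an \emph{$n$-cylinder} if $C=\pi_n^{-1}(A)$ for some $A\in\mathcal{B}(X)$, and let $\mathcal{A}$ denote the algebra generated by all such cylinders as $n$ ranges over $\N_0$. I would first set
\[
\hat\mu(\pi_n^{-1}(A)):=\mu_n(A)
\]
and verify this is well defined: if $\pi_n^{-1}(A)=\pi_m^{-1}(B)$ with $m\geq n$, then the identity $\pi_n=f^{m-n}\circ\pi_m$ on $\hat X$ combined with surjectivity of $\pi_m$ (which follows from surjectivity of $f$) forces $B=(f^{m-n})^{-1}(A)$, and iterating the given consistency relation yields $\mu_m(B)=\mu_m((f^{m-n})^{-1}(A))=\mu_n(A)$. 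Extension to a finitely additive set function on $\mathcal{A}$ is routine, since any two cylinders can be lifted to a common index $n$ and finite additivity then reduces to that of $\mu_n$.

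The principal obstacle is countable additivity on $\mathcal{A}$, and this is the only place where the separability hypothesis truly enters. A separable Borel space is, by definition, Borel isomorphic to a Borel subset of a Polish space, hence each $\mu_n$ is tight and inner regular with respect to compact sets in this realisation. I would then argue by contradiction in the classical way: given a decreasing sequence $\{C_k\}\subset\mathcal{A}$ with $\bigcap_k C_k=\emptyset$, approximate each $C_k$ from within by a compact-based cylinder $K_k$ with error at most $\varepsilon/2^k$. Under the product topology on $\prod_{n\geq 0}X$ the sets $K_k$ are compact, so if $\hat\mu(C_k)$ failed to converge to $0$ the finite intersections of the $K_k$ would all be nonempty, and compactness would produce a point in $\bigcap_k K_k\subset\bigcap_k C_k$, a contradiction.

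Once countable additivity on $\mathcal{A}$ is in hand, Carath\'eodory extension produces a probability measure on $\sigma(\mathcal{A})$, which equals $\mathcal{B}(\hat X)$ by the defining property of the latter. Uniqueness of $\hat\mu$ then follows from Dynkin's $\pi$--$\lambda$ theorem, since the cylinders form a $\pi$-system generating $\mathcal{B}(\hat X)$ on which any candidate measure is pinned down to $\mu_n\circ\pi_n$. The delicate technical step is the inner-regularity/tightness argument ensuring continuity of $\hat\mu$ at $\emptyset$; everything else amounts to careful bookkeeping about cylinder representations and the consistency relation.
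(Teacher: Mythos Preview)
The paper does not prove this theorem at all: it is quoted verbatim from Parthasarathy as a black box and used only as input for the construction of inverse--limit measures. So there is nothing to compare your argument against in the paper itself; the question is simply whether your sketch is sound.

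Your outline is the standard Kolmogorov--Carath\'eodory route and the bookkeeping parts (well-definedness on cylinders, finite additivity, uniqueness via a $\pi$-system) are fine. The gap is in the step you yourself flag as delicate. You assert that a ``compact-based cylinder'' $K_k=\pi_{n_k}^{-1}(L_k)\subset\hat X$ is compact in the product topology on $\prod_{n\ge 0} X$. This is not true under the stated hypotheses: $f$ is only $\mathcal{B}(X)$-measurable, so $\hat X$ need not be closed in the product, and limits of points of $K_k$ need not lie in $\hat X$. For a concrete obstruction, take $X=\mathbb{R}$ and let $f$ be the identity except that $f(0)=1$, $f(1)=0$. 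Then the points $(1/n,1/n,1/n,\ldots)\in\pi_0^{-1}([0,1])\subset\hat X$ converge in $\prod_n X$ to $(0,0,0,\ldots)$, which is \emph{not} in $\hat X$. Hence $\pi_0^{-1}([0,1])$ is not compact, and your diagonal/finite-intersection argument cannot be run inside $\hat X$.

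The repair is to carry out the compactness argument in an ambient compact product and then return to $\hat X$. Embed $X$ as a Borel subset of a compact metrizable $Y$, push each $\mu_n$ forward to $Y$, and define the finite-dimensional distributions on $Y^{n+1}$ as the law of $(f^n(Z),\ldots,f(Z),Z)$ with $Z\sim\mu_{n+1}$; consistency follows from $\mu_n=\mu_{n+1}\circ f^{-1}$. The classical Kolmogorov argument now goes through in the compact product $\prod_n Y$ and yields a measure $\nu$ there. Finally, each set $\{y\in\prod_n Y:\ y_n\in X,\ y_{n+1}\in X,\ y_n=f(y_{n+1})\}$ has full $\nu$-measure by construction of the level-$(n{+}1)$ marginal, so $\nu(\hat X)=1$ and $\hat\mu:=\nu|_{\hat X}$ is the desired extension. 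Your proposal becomes correct once this ambient-space detour and the concentration step $\nu(\hat X)=1$ are supplied.
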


Another result that we use is from \cite{KRS}. Let $M(X)$ denote the \emph{set of all invariant probability measures on the Borel $\sigma$-algebra $\mathcal{B}(X)$}. For any $\mu\in M(X)$ a continuous function $f\colon X\to X$ induces a map $f_{*}\colon M(X)\to M(X)$ given by
$$
f_{*}\mu:=\mu\circ f^{-1}.
$$
By Theorem~\ref{thm:partha} each $(\mu_0,\mu_1,\ldots)\in \underleftarrow{\lim}(M(X),f_{*})$ can be uniquely extended to a probability measure on $\hat X$, that is we have a function:
$$
\mathcal{G}\colon \underleftarrow{\lim}(M(X),f_{*})\to M(\hat X).
$$

Theorem~6 in \cite{KRS} shows that $\mathcal{G}$ is one-to-one and onto. Furthermore, we have the following result that we will use often.

\begin{thm}[Theorem 7 from \cite{KRS}]\label{thm:5.2}
    Suppose $f\colon X\to X$ is a continuous function on a compact metric space. Let $\mathcal{B}(\hat X)$ be the smallest $\sigma$-algebra such that all the projection maps $\pi_i$ are measurable and let $\hat\mu=\mathcal{G}((\mu_0,\mu_0,\ldots))$. Then $\hat\mu$ is $\hat f$-invariant and $\sigma$-invariant.
\end{thm}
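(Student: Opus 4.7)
The plan is to reduce $\hat{f}$-invariance to an identity on the natural generating $\pi$-system of $\mathcal{B}(\hat{X})$ and then extend by a monotone class / $\pi$--$\lambda$ argument. Cylinder sets $C_{n,B}:=\pi_n^{-1}(B)$ with $n\in\N_0$ and $B\in\mathcal{B}(X)$ form such a $\pi$-system (this is precisely the definition of $\mathcal{B}(\hat X)$ given in the statement). Consequently, once I verify $\hat\mu(\hat f^{-1}(C_{n,B}))=\hat\mu(C_{n,B})$ on every cylinder, invariance on the full $\sigma$-algebra will follow.

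The verification itself is a short unfolding of definitions. From $\hat f(z_0,z_1,\ldots)=(f(z_0),z_0,z_1,\ldots)$ one reads off $\pi_0\circ\hat f=f\circ\pi_0$ and $\pi_n\circ\hat f=\pi_{n-1}$ for $n\geq 1$, so $\hat f^{-1}(C_{0,B})=\pi_0^{-1}(f^{-1}(B))$ and $\hat f^{-1}(C_{n,B})=C_{n-1,B}$ for $n\geq 1$. Applying the defining property $\hat\mu(C_{n,B})=\mu_n(B)$ of $\mathcal{G}$ together with the hypothesis that the sequence is constant (which forces $f_*\mu_0=\mu_0$, exactly the compatibility condition needed for $(\mu_0,\mu_0,\ldots)$ to lie in $\underleftarrow{\lim}(M(X),f_*)$), each case reduces immediately to $\mu_0(f^{-1}(B))=\mu_0(B)$ or $\mu_{n-1}(B)=\mu_n(B)$, both of which hold.

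For $\sigma$-invariance I would note that the backward shift $\sigma(z_0,z_1,\ldots)=(z_1,z_2,\ldots)$ is a well-defined self-homeomorphism of $\hat X$ satisfying $\sigma\circ\hat f=\mathrm{id}$, whence $\sigma=\hat f^{-1}$; invariance under $\sigma$ is therefore equivalent to invariance under $\hat f$ and nothing new needs to be checked. There is no genuine obstacle here beyond careful bookkeeping of indices; the only point worth flagging is that the $n=0$ case of the cylinder computation is precisely where the compatibility $f_*\mu_0=\mu_0$ enters, so the hypothesis that the constant sequence lies in $\underleftarrow{\lim}(M(X),f_*)$ cannot be dropped.
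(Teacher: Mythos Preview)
Your proof is correct. Note, however, that the paper does not supply its own proof of this statement: it is quoted verbatim as Theorem~7 from \cite{KRS} and used as a black box, so there is no in-paper argument to compare against. Your cylinder-set computation together with the $\pi$--$\lambda$ extension is the standard way to establish the result, and your observation that $\sigma=\hat f^{-1}$ makes the $\sigma$-invariance clause immediate.
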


\begin{defn} 
Let $\mu$ be an $f$-invariant measure on $X$. Set $B_{\mu}$ is a \emph{basin of $\mu$ for $f$} if for all $g\in C(X)$ and $x\in B_{\mu}$:
$$
\lim_{n\to \infty} \frac{1}{n}\sum_{i=1}^{n}g(f^{i-1}(x))=\int g \/d\mu. 
$$ 
We call the measure $\mu$ \emph{physical for $f$} if there exists a basin $B_{\mu}$ of $\mu$ for $f$ and a measurable set $B$ so that $B\subset B_{\mu}$ and $\lambda(B)>0$.\\
An invariant measure $\hat{\nu}$ for the natural extension $\hat f:\hat X\to \hat X$ is called \emph{inverse limit physical measure} if $\hat{\nu}$ has a basin $\hat B_{\hat{\nu}}$ so that  $\lambda(\pi_0(\hat B_{\hat{\nu}}))>0$.
\end{defn}

\begin{thm}[Theorems 11 and 12 from \cite{KRS}]
If $\mu$ is a physical measure for $f:X\to X$ where $X$ is an Euclidean space, then the induced measure $\hat \mu$ on $\hat X$ is an inverse limit physical measure for the natural extension $\hat{f}$. In particular, there is a basin $\hat B_{\hat \mu}:=\pi_0^{-1}(B)$ of $\hat\mu$ for $\hat f$ with $\lambda(B)>0$.
\end{thm}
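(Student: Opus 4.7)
The plan is to prove the stronger assertion that $\pi_0^{-1}(B)$ is contained in a basin of $\hat\mu$ for $\hat f$; since $\pi_0(\pi_0^{-1}(B)) = B$, this immediately yields $\lambda(\pi_0(\hat B_{\hat\mu})) \ge \lambda(B) > 0$, certifying that $\hat\mu$ is an inverse limit physical measure with basin $\pi_0^{-1}(B)$. The $\hat f$-invariance of $\hat\mu$ is already supplied by Theorem~\ref{thm:5.2}, and the construction of $\hat\mu$ via Theorem~\ref{thm:partha} guarantees $\hat\mu \circ \pi_n^{-1} = \mu$, hence $\int (g \circ \pi_n)\, d\hat\mu = \int g\, d\mu$ for every $g \in C(X)$ and every $n\in \N_0$.

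The first key step is the commutation relation $\pi_n \circ \hat f = f \circ \pi_n$, valid for every $n\geq 0$ directly from the definition of $\hat f$. Iterating gives $\pi_n \circ \hat f^k = f^k \circ \pi_n$, so for every $\hat z \in \hat X$,
\[
\frac{1}{N}\sum_{k=0}^{N-1} (g\circ \pi_n)(\hat f^k(\hat z)) \;=\; \frac{1}{N}\sum_{k=0}^{N-1} g(f^k(\pi_n(\hat z))).
\]
Now assume $\pi_0(\hat z) \in B \subset B_\mu$. Since $\pi_0(\hat z) = f^n(\pi_n(\hat z))$, splitting the above sum at $k=n$ and reindexing shows that it differs from $\tfrac{1}{N}\sum_{j=0}^{N-1} g(f^j(\pi_0(\hat z)))$ by at most $2n\|g\|_\infty/N$; hence both converge to $\int g\, d\mu = \int (g\circ \pi_n)\, d\hat\mu$. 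Equivalently, basins are preserved under taking $f$-preimages, so $\pi_n(\hat z)\in B_\mu$ as well.

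To pass from cylindrical observables $g\circ \pi_n$ to arbitrary $G \in C(\hat X)$, the plan is to apply Stone--Weierstrass to the family $\mathcal{A} := \{g\circ \pi_n : g\in C(X),\ n\in \N_0\}$. Using the relation $\pi_n = f^{m-n}\circ \pi_m$ (valid for $m\ge n$ on the inverse limit), a product $(g_1\circ \pi_{n_1})(g_2\circ \pi_{n_2})$ with $n_1\le n_2$ rewrites as $\bigl((g_1\circ f^{n_2-n_1})\cdot g_2\bigr)\circ \pi_{n_2}\in \mathcal{A}$, so $\mathcal{A}$ is an algebra; it contains constants and separates points, so it is uniformly dense in $C(\hat X)$. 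A standard $3\eps$-approximation then promotes the Birkhoff convergence from $\mathcal{A}$ to all $G\in C(\hat X)$ at every $\hat z \in \pi_0^{-1}(B)$, which completes the proof.

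The only place where one might anticipate friction is verifying that $\mathcal{A}$ is an algebra, but the identity $\pi_n = f^{m-n}\circ \pi_m$ settles this at once. The rest of the argument is straightforward bookkeeping with Birkhoff sums combined with forward/backward invariance of $B_\mu$ under $f$, so no substantial obstacle is expected.
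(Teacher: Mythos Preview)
Your argument is correct. The paper itself does not supply a proof of this statement: it is recorded as Theorems~11 and~12 of \cite{KRS} and invoked as a black box. Your proposal therefore goes beyond what the paper does by giving a self-contained proof.

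The route you take---pulling back Birkhoff convergence along $\pi_n$ via the intertwining $\pi_n\circ\hat f=f\circ\pi_n$, then using Stone--Weierstrass on the algebra $\mathcal{A}=\{g\circ\pi_n\}$ to pass from cylindrical observables to all of $C(\hat X)$---is the natural elementary argument and is essentially how the result is proved in \cite{KRS} as well. The only minor points to keep tidy are: (i) surjectivity of $f$ (assumed throughout Subsection~\ref{subsec:MTpreliminaries}) is what guarantees $\pi_0(\pi_0^{-1}(B))=B$; and (ii) closure of $\mathcal{A}$ under sums is immediate by the same rewriting trick you use for products, so $\mathcal{A}$ is genuinely a subalgebra. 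Neither of these is a gap in your write-up, just details worth making explicit if you expand the sketch.
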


Let $M(I)$ be the space of
Borel probability measures on $I$ equipped with the {\it Prokhorov metric} $D$ defined by
$$
D(\mu, \nu):=\inf\left\{\eps>0\colon
\begin{array}{l l}
& \mu(A) \le \nu(B(A,\eps))+\eps \text{ and }
 \nu(A) \le \mu(B(A,\eps))+\eps \\
& \text{ for any Borel subset } A \subset I
\end{array}
\right\}
$$
for $\mu, \nu \in M(I)$. The following (asymmetric) formula
$$D(\mu, \nu)=\inf\{\eps>0\colon
\mu(A)\leq \nu(B(A,\eps))+\eps \text{ for all Borel subsets } A\subset I\}$$
is equivalent to original definition, which means we need to check only one of the inequalities.
It is also well known, that the topology induced by $D$ coincides with the {\it weak$^*$-topology} for measures, in particular $M(I)$ equipped with the metric $D$ is a compact metric space (for more details on Prokhorov metric and weak*-topology the reader is referred to \cite{Huber}).
 
 \subsection{Main construction}
In what follows, we will adjust Oxtoby-Ulam technique of full Lebesgue measure transformation \cite{OxtobyUlam} to the context of homotopies in parametrized BBMs and combine this with Brown's approximation theorem on a complete space to get a parametrized family of homeomorphisms with attractors that attract background physical Oxtoby-Ulam measure. Additionally, these attractors are varying continuously in the Hausdorff metric. 
Recall that a Borel probability measure on a manifold $M$ is called {\em Oxtoby-Ulam (OU)} or {\em good} if it is non-atomic, positive on open sets, and assigns zero measure to the boundary of manifold $M$ (if it exists) \cite{APBook,OxtobyUlam}.
In our case, we will first construct a measure $\hat{\lambda}_f$ on the inverse limit $\underleftarrow{\lim}(D,f)$ using Lebesgue measure $\lambda$ on $D$, where the map $f$ is a near-homeomorphism of $D$
and identity on $\partial D$. Then we will find a homeomorphism $\Theta_f\colon \underleftarrow{\lim}(D,f)\to D$ and define a push-forward measure $\lambda_f=(\Theta_f)_* \hat{\lambda}_f$.
By this construction it is clear that $\lambda_f$ is an OU-measure.

To provide a parametrized version of Brown's theorem and in particular to construct a continuously varying family of homeomorphisms $\Theta_f$, we need the following definitions.

\begin{defn}\label{def:LebesgueSequence}
Let $\underleftarrow{\lim}(X_i,f_i)$ be an inverse limit where $\{X_i\}_{i\geq 0}$ are continua and $\{f_i: X_{i+1}\to X_i\}_{i\geq 0}$ a collection of continuous maps. A sequence $(a_i)_{i\geq 0}$ of positive real numbers is a {\em Lebesgue sequence} for $\underleftarrow{\lim}(X_i,f_i)$ if there is a sequence $(b_i)_{i\geq 0}$ of positive real numbers such that:
\begin{enumerate}
    \item $\sum^{\infty}_{i=0} b_i<\infty$,
    \item for any $x_i,y_i\in X_j$ and any $i<j$, if $|x_i-y_i|<a_j$, then $|f_{i+1}\circ\ldots \circ f_{j}(x_i)-f_{i+1}\circ\ldots \circ f_{j}(y_i)|<b_j$.
\end{enumerate}
A sequence $(c_i)_{i\geq 0}$ of positive real numbers is a {\em measure sequence} for $\underleftarrow{\lim}(X_i,f_i)$ if:
\begin{enumerate}
    \item $\sum^{\infty}_{i=n+1} c_i<c_n/2$ for any $n\geq 0$,
    \item for any two points $\hat x\neq\hat y\in \underleftarrow{\lim}(X_i,f_i)$ there exists a non-negative integer $N$ so that $|x_{N+1}-y_{N+1}|>c_N$.
\end{enumerate}
\end{defn}

Now we are ready to prove the main theorem of this section.

\begin{proof}[Proof of Theorem \ref{lem:BBM}]
Let $C_\lambda(I)$ be the family of Lebesgue measure-preserving maps and let $Q=\cap^{\infty}_{n=0} Q_n\subset C_\lambda(I)$ be the intersection of open dense sets $Q_n$ satisfying Theorem~\ref{thm:UniLimPresLeb} (crookedness), Theorem~9 from \cite{BT} (leo property), Theorem~15 from \cite{BT} (weak mixing with respect to $\lambda$) and Theorem~3 from \cite{BCOT} (shadowing property).
Take a countable collection of maps $\{f_i\}_{i=0}^\infty \subset Q$ that are dense in $Q$. By assumptions we know that each of these maps is leo, Lebesgue measure is ergodic measure for each $f_i$ (it is even weakly mixing), has the shadowing property and by Theorem~\ref{thm:UniLimPresLeb} for every $\delta>0$ there exists $N\in \mathbb{N}$ so that $f^N_i$ is $\delta$-crooked.
Let $S\subset I$ be a set of full Lebesgue measure such that any $x\in S$ is generic point of all $f_i$.

We also fix a sequence $\{b_n\}_{n\in\N_0}\subset \mathbb{R}$ such that $\sum_{n=0}^\infty b_n<+\infty$, see Definition~\ref{def:LebesgueSequence}. Let $D\subset \mathbb{R}^2$ be a closed disc and $R\colon D\to D$ a homeomorphism that will be specified in two paragraphs.
We define a sequence $R^n_i$ of homeomorphisms of $D$ such that $\lim_{n\to\infty}R^n_{i_n}= R$ (for each sequence $i_n$) and $\rho(R^{n}_i,R)<\xi^n_i/4<b_n/8$ where the sequence $\xi^n_i$ will be specified later (we will need its faster convergence).
Note that $\rho(R^n_i,R^{n+1}_j)<\xi^n_i/4+\xi^{n+1}_j/4<\xi^n_i/2$ provided that $\xi^n_i>\xi^{n+1}_j$. All the lower indices will be specified later.

Note that for each $n$ there exist indices $\{j_i^n\}_{{i}\in\N}$ and $\delta_{i}^n<2^{-n}$ such that if we denote $A^n_i:=B(f_{j_i^n},\delta_i^n)$
then $A^n:=\cup_i A^n_i$ is dense (and open by the definition) in $Q$, $\{f_i\}_{i=0}^\infty\subset A^n$ and $A^n_i\cap A^n_j=\emptyset$ for $i\neq j$.
Simply, for any $i$ the set $\set{\rho(f_i,f_j) : j\neq i}$ is countable, and so we can choose $\delta_i^n$ outside this set, making construction of consecutive $A^n_i$ possible by induction (none of $f_j$'s is in the boundary of $A^n_i$, and we can indeed avoid the boundary since it is of the negligible size). We can also make each $\delta^n_i$ arbitrarily small, in particular for $m>n$ and any $i,j$
we may require that if $A^m_i\cap A^n_j\neq \emptyset$, then $A^m_i\subset A^n_j$.
We can also require that each $A^n_i\subset Q_n$, since each $Q_n$ is open and contains all functions $f_i$.

In our construction we will implement additional requirements on values of $\delta^n_i$, because we will need them to be sufficiently small as will be specified later.

For a closed disk $D\subset \mathbb{R}^2$, let $I\subset \Int (D)$ be the unit interval on which the BBM construction will take place, let $I\subset \Int(D_1)\subset D_1 \subset \Int (D_2)\subset D_2\subset \Int(D)$ where $D_1$ and $D_2$ are two closed discs and let $R\colon D\to D$
be a near-homeomorphism, such that $R(D_2)=I$, $R|_{D\setminus D_2}$ is one-to-one and $R$ is identity on the boundary of $D$.
We also require that the smash $R$ is done along radial lines. It is not hard to provide an analytic formula defining $R$.
These maps and discs are fixed throughout the whole construction.

Now let us briefly recall how BBM construction is performed in general, for more detail see Section~\ref{sec:arc}.
Given a map $f\in C_\lambda(I)$ we construct an unwrapping $\bar f\colon D\to D$ in the following way:
\begin{enumerate}
	\item $\bar f(I)\subset \Int D_1$ and as usually in BBMs $\bar f|_I$ is a rotated graph of $f$,
	\item $\bar f$ is identity on $D\setminus D_1$,
	\item $R\circ \bar{f}|_I=f$ and every point in $\Int D$ is attracted to $I$ under iteration of $R\circ \bar{f}$
	where $I$ is identified with $I$ in a standard way.
\end{enumerate}
We also denote $\tilde f=\bar f|_I$. One of the main features of the construction will be to ensure that unwrappings within the family that we construct vary continuously with $f$.

One important property to notice here is that any set $U\subset D_2\setminus D_1$ of positive Lebesgue measure in $D$ is transformed
onto set $R\circ \bar{f}(U)\subset I$ of positive Lebesgue measure on $I$. It is a consequence of Fubini's theorem, since the smash $R$ is performed along radial lines, and so the base of integration needs to have positive Lebesgue measure. 
 
Let $\set{C_i}_{i=1}^\infty\subset \Int D_1$ be a collection  of Cantor sets such that $C_i\cap I = \emptyset$, $C_i\cap C_j=\emptyset$ and $\lambda(D_1)={\sum_{i=1}^\infty}\lambda(C_i)$.
In other words, these Cantor sets fill densely $\Int D_1\setminus I$ and carry full Lebesgue measure. Such family of Cantor sets can be chosen using standard arguments. We may require that $\cup_i R(C_i)\subset S$, because the union of radial lines over $S$ has the full Lebesgue measure.

Now it is a good moment to set the first restriction on $\delta_i^0$. When $f_{j_i^0}$ is fixed, we also have $\bar f_{j_i^0}$ and so the images $\bar f_{j_i^0}(C_k)$ are explicitly determined as well.
Therefore, we may require that 
$$\dist(\bar f_{j_i^0}(I), \bar f_{j_i^0}(C_1))>4\delta_i^0, \dist(\bar f_{j_i^0}(I), \partial D_1)>4\delta_i^0 \text{ and } \dist( \partial D_1, \bar f_{j_i^0}(C_1))>4\delta_i^0.$$ 
Set 
$$h^0_{i}:=\bar f_{j_i^0}.$$

Let $a^0_{i}>0$ be such that if $d(x,y)<a^0_{i}$ for $x,y\in D$ then $d(h^0_{i}(x),h^0_{i}(y))<b_0/2$. We also require that $16\delta_i^0<b_0$. This implies that if map $H: D\to D$ satisfies $\rho(H, h^0_{i})<4\delta_i^0$ and $d(x,y)<a^0_{i}$
then 
\begin{eqnarray}
d(H(x),H(y)) &\leq&  d(h^0_{i}(x),h^0_{i}(y))+d(H(x),h^0_{i}(x))+d(h^0_{i}(y),H(y))<\nonumber\\
&<& b_0/2+8\delta_i^0<b_0.\label{eq:b0}
\end{eqnarray}
Finally, we require that
$$
d(x,y)<a^0_{i}\quad \Rightarrow \quad d(R_{0}^0\circ h^{0}_{i}(x),R_{0}^0\circ h^{0}_{i}(y))<b_0.
$$

Now let us explain how the first step of the construction will be made, the reader is also referred to Figure~\ref{fig:transform}.
Let us take any $k,l$ such that $A^1_k\subset A^0_l$. Then $f_{j_k^1}\in B(f_{j_l^0},\delta_l^0)$.
Construct a homeomorphism $G\colon D\to D$ such that $G(x)=x$ for all $x\not\in B(h^0_{l}(I),2\delta_l^0)$
and for $x\in I$ we have 
$$G(h^0_{l}(x))=G(\bar f_{j_l^0}(x))=\bar f_{j_k^1}(x).$$ 
Additionally we can require from the construction that $\rho(G,\textrm{id})<2\delta^0_{l}$, because we move the graph along horizontal lines. Similarly, we construct a map $H\colon D\to D$
such that $H(x)=x$ for $x\notin B(\bar f_{j_l^0}(C_1),\delta_l^0)$, 
$$H(\bar f_{j_l^0}(C_1))\subset {\bigcup^{\infty}_{i=1}} C_{i} \text{ and } d(H(x),x)<\delta_l^0$$
for all other $x\in B(\bar f_{j_l^0}(C_1), \delta_l^0)$.

\begin{figure}
\begin{tikzpicture}[scale=0.65]
\draw[solid](0,0) circle (3);
\draw[ultra thick](-1,0)--(1,0);
\node at (2.4,2.4) {\tiny $D$};
\node at (0.5,-0.3) {\tiny  $I$};
\draw[solid](0,0) circle (2.5);
\node at (1.95,1.95) {\tiny $D_2$};
\draw[solid](0,0) circle (2);
\node at (1.6,1.6) {\tiny $D_1$};
\draw[fill=blue!5,solid](0,0) circle (2);
\draw[ultra thick](-1,0)--(1,0);
\node at (0.3,-0.4) {\tiny $I$};
\draw[blue,thick] (-1,0.2)--(1/3,0.2)--(-1/3,-0.2)--(1,-0.2);
\node at (-0.35,0.65) {\color{blue} \small $\bar f_{j_l^0}$};
\draw[->,thick] (-2.9,0)--(-2.5,0);
\draw[->,thick] (2.9,0)--(2.5,0);
\draw[->,thick] (0,-2.9)--(0,-2.5);
\draw[->,thick] (0,2.9)--(0,2.5);
\draw (-2.5,0)--(0,0);
\draw (2.5,0)--(0,0);
\draw(0,-2.5)--(0,0);
\draw (0,2.5)--(0,0);
\draw (-1.3,2.13)--(-1/3,0);
\draw (-1.3,-2.13)--(-1/3,0);
\draw (1.3,2.13)--(1/3,0);
\draw (1.3,-2.13)--(1/3,0);
\draw (-2.15,1.3)--(-2/3,0);
\draw (-2.15,1.3)--(-2/3,0);
\draw (2.15,-1.3)--(2/3,0);
\draw (-2.15,-1.3)--(-2/3,0);
\draw (2.15,1.3)--(2/3,0);
\draw (2.5,0)--(2,0);
\draw(0,-2.5)--(0,-2);
\draw (0,2.5)--(0,2);
\node at (-0.5,1.5) {\tiny $R$};
\node at (0.4,0.9) {\color{red}\tiny $C_1$};
\draw[->] (3.1,0)--(3.6,0);
\node at (3.4,0.5) {\small $G$};
\draw [fill,red] (-0.05,1.4) rectangle (0.05,1.3);
\draw [fill,red] (-0.05,1.2) rectangle (0.05,1.1);
\draw [fill,red] (-0.05,0.8) rectangle (0.05,0.7);
\draw [fill,red] (-0.05,0.6) rectangle (0.05,0.5);
\end{tikzpicture}
\hspace{-0.3cm}
\begin{tikzpicture}[scale=0.65]
\draw[solid](0,0) circle (3);
\draw[ultra thick](-1,0)--(1,0);
\node at (2.4,2.4) {\tiny $D$};
\node at (0.5,-0.3) {\tiny  $I$};
\draw[solid](0,0) circle (2.5);
\node at (1.95,1.95) {\tiny $D_2$};
\draw[solid](0,0) circle (2);
\node at (1.6,1.6) {\tiny $D_1$};
\draw[fill=blue!5,solid](0,0) circle (2);
\draw[ultra thick](-1,0)--(1,0);
\node at (0.3,-0.4) {\tiny $I$};
\draw[teal,thick] (1,-0.2)--(0,-0.2)--(1/2,-0.1)--(-1/2,-0.1)--(1/2,0)--(-1/2,0.1)--(0,0.2)--(-1,0.2);
\node at (-0.3,0.65) {\color{teal} \small $\bar f_{j_k^1}$};
\node at (0.7,1.1) {\color{red}\tiny $\bar f_{j_l^0}(C_1)$};
\draw[->,thick] (-2.9,0)--(-2.5,0);
\draw[->,thick] (2.9,0)--(2.5,0);
\draw[->,thick] (0,-2.9)--(0,-2.5);
\draw[->,thick] (0,2.9)--(0,2.5);
\draw (-2.5,0)--(0,0);
\draw (2.5,0)--(0,0);
\draw(0,-2.5)--(0,0);
\draw (0,2.5)--(0,0);
\draw (-1.3,2.13)--(-1/3,0);
\draw (-1.3,-2.13)--(-1/3,0);
\draw (1.3,2.13)--(1/3,0);
\draw (1.3,-2.13)--(1/3,0);
\draw (-2.15,1.3)--(-2/3,0);
\draw (-2.15,1.3)--(-2/3,0);
\draw (2.15,-1.3)--(2/3,0);
\draw (-2.15,-1.3)--(-2/3,0);
\draw (2.15,1.3)--(2/3,0);
\draw (2.5,0)--(2,0);
\draw(0,-2.5)--(0,-2);
\draw (0,2.5)--(0,2);
\draw[->] (3.1,0)--(3.6,0);
\node at (3.4,0.5) {\small $H$};
\draw [fill,red] (-0.5,1.4) rectangle (-0.6,1.3);
\draw [fill,red] (-0.2,1.2) rectangle (-0.3,1.1);
\draw [fill,red] (0.2,0.8) rectangle (0.3,0.7);
\draw [fill,red] (0.5,0.6) rectangle (0.6,0.5);
\end{tikzpicture}
\hspace{-0.3cm}
\begin{tikzpicture}[scale=0.65]
\draw[solid](0,0) circle (3);
\draw[ultra thick](-1,0)--(1,0);
\node at (2.4,2.4) {\tiny $D$};
\node at (0.5,-0.3) {\tiny  $I$};
\draw[solid](0,0) circle (2.5);
\node at (1.95,1.95) {\tiny $D_2$};
\draw[solid](0,0) circle (2);
\node at (1.6,1.6) {\tiny $D_1$};
\draw[fill=blue!5,solid](0,0) circle (2);
\draw[ultra thick](-1,0)--(1,0);
\node at (0.3,-0.4) {\tiny $I$};
\draw[teal,thick] (1,-0.2)--(0,-0.2)--(1/2,-0.1)--(-1/2,-0.1)--(1/2,0)--(-1/2,0.1)--(0,0.2)--(-1,0.2);
\node at (-0.3,0.65) {\color{teal} \small $\bar f_{j_k^1}$};
\node at (0.5,1.1) {\color{red}\tiny $H(\bar f_{j_l^0}(C_1))$};
\draw[->,thick] (-2.9,0)--(-2.5,0);
\draw[->,thick] (2.9,0)--(2.5,0);
\draw[->,thick] (0,-2.9)--(0,-2.5);
\draw[->,thick] (0,2.9)--(0,2.5);
\draw (-2.5,0)--(0,0);
\draw (2.5,0)--(0,0);
\draw(0,-2.5)--(0,0);
\draw (0,2.5)--(0,0);
\draw (-1.3,2.13)--(-1/3,0);
\draw (-1.3,-2.13)--(-1/3,0);
\draw (1.3,2.13)--(1/3,0);
\draw (1.3,-2.13)--(1/3,0);
\draw (-2.15,1.3)--(-2/3,0);
\draw (-2.15,1.3)--(-2/3,0);
\draw (2.15,-1.3)--(2/3,0);
\draw (-2.15,-1.3)--(-2/3,0);
\draw (2.15,1.3)--(2/3,0);
\draw (2.5,0)--(2,0);
\draw(0,-2.5)--(0,-2);
\draw (0,2.5)--(0,2);
\draw [fill,red] (-0.9,1.4) rectangle (-1,1.3);
\draw [fill,red] (-0.8,1.2) rectangle (-0.9,1.1);
\draw [fill,red] (0.63,0.8) rectangle (0.73,0.7);
\draw [fill,red] (0.53,0.6) rectangle (0.63,0.5);
\end{tikzpicture}
\caption{The figure shows how maps $G$ and $H$ from the proof of Theorem~\ref{lem:BBM} transform $D$. Namely, the map $G$ switches to a different unwrapping which moves the Cantor set $C_1$ presumably away from the radial lines drawn in the picture. However, the map $H$ places this Cantor set $C_1$ to the appropriate position (possibly to different radial lines).}\label{fig:transform}
\end{figure}
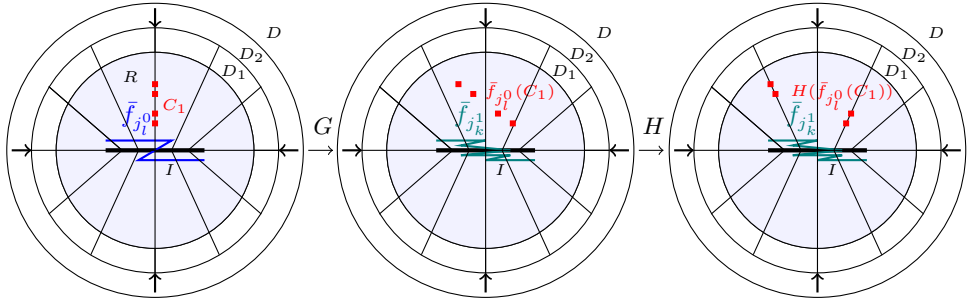

Simply, we first find Cantor sets which are in $\bigcup{_{i=1}^\infty} C_{i}$ and sufficiently well approximate small portions covering Cantor set $H(\bar f_{j_l^0}(C_1))$,
then define maps between these small portions and selected Cantor sets, and then extend the map to a homeomorphism on sufficiently small neighborhoods
where these translations of small portions take place. This is possible, because our Cantor sets are in the plane, so are \emph{tamely embedded}\footnote{A Cantor set $C$ in a manifold $M$ is \emph{tamely embedded} if there exist arbitrarily small neighbourhoods of $C$ that are finite unions of pairwise disjoint closed cubes from $M$.} (for such a construction see e.g. Appendix A in \cite{Cro}).
Now we can define the homeomorphism:
$$
h_{k}^1:=H\circ G\circ h^0_{l}.
$$
By our construction $\rho(h_{k}^1, h_{l}^0)<4\delta_l^0$,
$h_{k}^1|_I=\tilde f_{j_k^1}$ and $h_{k}^1(C_1)\subset \bigcup{_{i=1}^{\infty}} C_{{i}}$.
Similarly as above, we decrease $\delta^1_k$ so that 
\begin{equation}\label{eq:dist1}
\dist(h^1_{k}(I), h^1_{k}(C_1))>4\delta_k^1,\hspace{0.1cm} \dist(h^1_{k}(I),\partial D_1)>4\delta_k^1,\hspace{0.1cm} \dist( \partial D_1, h^1_{k}(C_2))>4\delta_k^1
\end{equation}
and additionally
\begin{equation}\label{eq:dist2}
\dist(h^1_{k}(I), h^1_{k}(C_2))>4\delta_k^1,\hspace{0.2cm} \dist( h^1_{k}(C_1), h^1_{k}(C_2))>4\delta_k^1.
\end{equation}
In the next step, the modifications will take place around the graph of $h_{k}^1(I)$ and Cantor set $h^1_{k}(C_2)$ so we need the neighborhoods of sets disjoint (\eqref{eq:dist1} and \eqref{eq:dist2}) and we do not want to change the definition of $h^1_{k}$ over $C_1$. 

This construction can be extended by induction in the following way. If $\delta_l^n>0$ is fixed sufficiently small with respect to the continuity of $R^n_l,R^{n+1}_k$ then
if $A{^n_l}\supset A{^{n+1}_k}$
then the maps $h_{l}^n$, $h_{k}^{n+1}$ satisfy:
\begin{enumerate}
\item $\rho(R_{l}^{n}\circ h_{l}^{n},R_{k}^{n+1}\circ h_{k}^{n+1})<2\rho(R_{l}^{n},R_{k}^{n+1})<\xi_{l}^{n}/2$,
\item $h_{k}^{n+1}|_I=\tilde f_{j_k^{n+1}}$,
\item $h_{l}^{n}|_{C_r}=h_{k}^{n+1}|_{C_r}$ for $r=1,\ldots,n$,
\item $h_{k}^{n+1}(C_r)\subset \cup{_{i=1}^{\infty}} C_{i}$ for $r=1,\ldots, n+1$,
\item $\delta_k^{n+1}<\delta_{l}^n/2$ and $\xi_k^{n+1}<\xi_{l}^n/2$.
\end{enumerate}

Now we will perform an additional adjustment of the constants $\delta^n_i$ and $\xi^n_i$ so that we are able to repeat arguments from \cite{Bro}. That is, we are going to ensure that there exist a so-called Lebesgue sequence and a measure sequence from Definition~\ref{def:LebesgueSequence} for the maps we construct.

Assume that the set $A^{n}_{j_n}$ is already constructed for some index $j_n\in \N$ and $A_{j_1}^1\supset A_{j_2}^2\supset \ldots \supset A_{j_n}^n$. Let $h_{j_i}^{i}$ be a homeomorphism corresponding to $A^i_{j_i}$.
There exists a positive real number  $a^n_{j_n}$ such that if $d(x,y)<a_{j_n}^n$ then
$$
 \forall i<n \quad \; d(R^{i+1}_{j_{i+1}}\circ h^{i+1}_{j_{i+1}}\circ \ldots \circ R^n_{j_{n}}\circ h_{j_n}^n(x),R^{i+1}_{j_{i+1}}\circ h^{i+1}_{j_{i+1}}\circ \ldots \circ R^n_{j_n}\circ h_{j_n}^n(y))<b_n.
$$
We require that $\delta_{j_{n+1}}^{n+1}$ for $A^{n+1}_{j_{n+1}}\subset A_{j_n}^n$ is adjusted with the correspondence to the 
condition $\xi^{n+1}_{j_{n+1}}<a_{j_n}^n/8$
. This will ensure that if uniform limit $R^n_{j_n}\circ h^n_{j_n}\to F$ exists, then
\begin{equation}
\rho(F,R^{n+1}_{j_{n+1}}\circ h^{n+1}_{j_{n+1}})\leq 4\xi^{n+1}_{j_{n+1}}\sum_{i=1}^\infty 2^{-i} \leq 8\xi^{n+1}_{j_{n+1}} <a_{j_n}^n
\label{eq:Fandhj}
\end{equation}
while $(a_{j_n}^n)_{n=1}^\infty$ is a Lebesgue sequence for $\{R^n_{j_n}\circ h_{j_n}^n\}_{n=1}^\infty$ and $(b_n)_{n=1}^\infty$.

Assume that a map  $T\in C(D,D)$ is given a priori and $F$ was obtained as its perturbation. Fix any $i,n>0$ and let $\gamma>0$ be such that $\rho(F,T)<\gamma$. Then for any $x\in D$ we have
$$
d(F^i(x),T^i(x))\leq \sum_{j=0}^{i-1} d(T^j(F^{i-j}(x)), T^{j+1}(F^{i-j-1}(x))),
$$
therefore, since $T$ is fixed, we have $\rho(F^i,T^i)<1/n$ for each $i=0,1,\ldots,n$, provided that $\gamma$ was sufficiently small ($\gamma$ depends on the continuity of $T,T^2,\ldots, T^n$).
Therefore, taking $\delta_{j_n}^n$, $\xi_{j_n}^n$ sufficiently small, we can require that if uniform limit $R^n_{j_n}\circ h^n_{j_n}\to F$ exists, then
$$
\rho(F^i,(R^n_{j_n}\circ h^n_{j_n})^i)<1/4n
$$
for $i=0,1,\ldots,n$. For each $j_n\in \N$ we pick a real number $c_{j_n}^n>0$ in such a way that $c_{j_n}^n<\frac{1}{8} c_k^{n-1}$  
where $A_{k}^{n-1}\supset A_{j_n}^n$ and additionally, if $d(x,y)<c_{j_n}^{n}$ for some $x,y\in D$, then 
$$
d((R^n_{j_n}\circ h^n_{j_n})^i(x),(R^n_{j_n}\circ h^n_{j_n})^i(y))<1/4n.
$$
This choice has the following consequences. First of all, 
$$\sum^\infty_{i={n+1}} c^{i}_{j_i}\leq \sum^\infty_{i={n+1}}8^{n-i} c_{j_n}^{n}<c_{j_n}^{n}/2.$$
Additionally, if we pick any distinct $\hat x,\hat y\in \underleftarrow{\lim}(D,F)$ then there is $M\in \N_0$ such that $d(x_{M},y_{M})>\gamma$ for some $\gamma>0$.
Take $n>M$ such that $1/n<\gamma$. Then 
\begin{gather*}
d(x_M,y_{M})\leq d(F^{n-M}(x_n),F^{n-M}(y_n))\leq\\
2\rho(F^{n-M}, (R^n_{j_n}\circ h^n_{j_n})^{n-M})+d((R^n_{j_n}\circ h^n_{j_n})^{n-M}(x_{n+1}),(R^n_{j_n}\circ h^n_{j_n})^{n-M}(y_{n+1}))\leq\\
1/2n+d((R^n_{j_n}\circ h^n_{j_n})^{n-M}(x_{n+1}),(R^n_{j_n}\circ h^n_{j_n})^{n-M}(y_{n+1})).
\end{gather*}
If $d((R^n_{j_n}\circ h^n_{j_n})^{n-M}(x_{n+1}),(R^n_{j_n}\circ h^n_{j_n})^{n-M}(y_{n+1}))<1/4n$ then we have a contradiction with the choice of $\gamma$,
therefore $d(x_{n+1},y_{n+1})>c_{j_n}^n$, meaning that $\{c_{j_n}^n\}_{n=1}^\infty$ is a measure sequence for $F$.
We additionally require that for each $n\in \N_0$ we have
\begin{equation}
8\delta^{n+1}_{j_{n+1}}<\min \{c_{j_n}^n, \min_{k<n} \{L(c^n_{j_n}, R^k_{j_k}\circ h_{j_k}^k\circ \ldots \circ R^n_{j_n}\circ h_{j_n}^n)\}\}
\label{eq:deltanL}
\end{equation}
where 
$$
L(\eps,G):=\sup \{\delta>0 : d(x,y)<\delta \quad \implies \quad d(G(x),G(y))<\eps\}.
$$
The above conditions are easily satisfied by induction.

Now we will turn our attention to the implications of the construction. Assume that the above inductive construction has been performed and fix any $g\in A=\cap_{n=1}^{\infty} A^n$.
Then there are indices  $i=i_n$ such that $g\in A^n_{i}=B(f_{j_{n}},\delta_{j_{n}}^n)$ where $j_n:=j^{n}_{i_n}$.
Consider the associated sequence of homeomorphisms $h^n_{j_n}\colon D\to D$.
For any $n<m$ we have
$$
\rho(h_{j_n}^{n},h_{j_m}^{m})<4\sum_{i=n}^{m-1}2^{-i}\leq 2^{-n+3},
$$
and therefore the maps $h^n_{j_n}$ form a Cauchy sequence in $C(D,D)$. Thus there exists a map $F_g$
obtained as the uniform limit of the maps $h^n_{j_n}$. But then $F_g|_I=\tilde g$ as $\tilde g$ is a uniform limit of maps $\tilde f_{j_n}=h^n_{j_n}|_I$.
Furthermore $h^n_{j_n}|_{C_r}=h^r_{j_r}|_{C_r}$ for all $n\geq r$
and therefore $F_g(C_r)\subset \cup{_{i=1}^{\infty}}C_i$.

Let us define a map $\mathcal{F}\colon D\times A\to D\times A$ by 
$$\mathcal{F}(x,g)=(R(F_g(x)),g).$$
Note that for every $f,g\in A$ and $\eps>0$ there is $\delta>0$ such that if $\rho(f,g)<\delta$ then there are $n$ and $j_n$
such that $2^{-n+4}<\eps$ and additionally
$\rho(F_f,h_{j_n}^{n})<4\delta_{j_n}^n<2^{-n+2}$ and $\rho(F_g,h_{j_n}^{n})<4\delta_{j_n}^n<2^{-n+2}$.
Namely, for sufficiently small $\delta$ we have $f,g\in B(g,\delta)\subset A^n_{j_n}$.
This shows that $\mathcal{F}$ is continuous.

Now we will deduce properties (c) and (d).
By the definition it holds that $\mathcal{F}(x,g)=(R(F_g(x)),g)=(\tilde g(x),g)$ for each $x\in I$.
If we fix any set of positive Lebesgue measure $U\subset D_2\setminus D_1$ then $R\circ F_g(U)=R(U)$ and $R(U)$ has positive one-dimensional Lebesgue measure on $I$.
But then by Fubini's theorem there is a set $S_g\subset U$ such that $\lambda(S_g)>0$ and $R(S_g)$ is contained in the set of generic points of $g$,
in particular any point $x\in S_g$ under iteration of $F_g$ {\em recovers the Lebesgue measure on $I$}, i.e. the measure $\frac{1}{n}\sum_{i=0}^n \delta_{(F^i_g(x))}$
converges in weak* topology to the Lebesgue measure on $I$.

But now consider the special case of map $F_i:=R\circ F_{f_i}$ and take any set $U\subset \Int D$ of positive Lebesgue measure. We can write $U=\cup_{j=0}^\infty U_j$
as a disjoint union of sets $U_j$ such that $j$ is the minimal index such that $F_i^j(U_j)\subset D_2$. Note that for any $j>0$ we have $F_i^j(U_j)=R^j(U_j)\subset D_2\setminus I$.
In particular, if $Y\subset F_i^j(U_j)$ is such that $\lambda(Y)=0$ then also $\lambda(R^{-j}(Y))=0$, where the latter formula makes sense, because $R^{-1}$
is well defined on  $D_2\setminus I$. But then $F_{f_i}(\bigcup_{j=0}^\infty R^j(\tilde{U}_j))\subset \cup{_{i=1}^{\infty}}C_i$ for some sets $\tilde U_j\subset U_j$ satisfying $\lambda(U_j)=\lambda(\tilde U_j)$ and therefore 
$$
F_i^{j+1}(\tilde{U}_j)\subset S
$$
for each $j$. But then there is a set $\tilde S_i$ of full Lebesgue measure in $D$ such that for each $x\in \tilde S_i$
there is $N\in \N_0$ such that $F_i^N(x)\in S$. This means that every point in $\tilde S_i$ is eventually transferred into a generic point of $f_i$,
which means that the orbit of $x$ under $F_i$ recovers the one-dimensional Lebesgue measure on $I$. This shows that the Lebesgue measure on $I$ is a physical measure for each $\mathcal{F}(\cdot,g)$
and it is unique physical measure for a dense set of functions $g\in C_{\lambda}(I)$ (this dense set corresponds with the maps $\{f_i\}^{\infty}_{i=0}$ from the start of the construction). In fact it is unique each time
when generic points of $g$ contain the set $S$ and may have (but not necessarily has) other physical measures in remaining cases.

Denote $\hat{D}:=\underleftarrow{\lim} (D\times A, \mathcal{F})$.
Now we are going to define a map $\Theta \colon \hat{D} \to D\times A$
by 
$$\Theta(\hat x,g):=(\lim_{n\to \infty} R^1_{i_1}\circ h^1_{i_1}\circ\ldots \circ R^n_{i_n}\circ h^n_{i_n}(x_n),g)$$

where $g\in \cap{_{n=1}^{\infty}} A^n_{i_n}$. We can write $g$ as the second coordinate in $\underleftarrow{\lim} (D\times A, \mathcal{F})$, since it is a constant sequence of $g$'s;  thus we can also write $\Theta_g:=\Theta(\cdot,g)\colon \hat D_g\to D$, where $\hat D_g:=\underleftarrow{\lim} (D\times \{g\}, \mathcal{F})$. Since we have satisfied its assumptions, by Theorem~1 from \cite{Bro} $\Theta$ is well defined. Furthermore, by Theorem~2 from \cite{Bro}, it holds that $\Theta(\cdot,g)$ is a homeomorphism for each $g\in A$, because it is a composition of a homeomorphism with projection onto the first coordinate in the inverse limit defined by homeomorphisms
$\underleftarrow{\lim} (D,R^n_{i_n}\circ h^n_{i_n})$.

Note that if $(\hat x,g)\in \hat D$
then
\begin{eqnarray*}
&&d(R^1_{i_1}\circ h^1_{i_1}\circ\ldots \circ R^n_{i_n}\circ h^n_{i_n}(x_n),R^1_{i_1}\circ h^1_{i_1}\circ\ldots \circ R^{n+1}_{i_{n+1}}\circ h^{n+1}_{i_{n+1}}(x_{n+1}))=\\
&&\hspace{-15pt} d(R^1_{i_1}\circ h^1_{i_1}\circ\ldots \circ R^n_{i_n}\circ h^n_{i_n}\circ R\circ F_g(x_{n+1}),R^1_{i_1}\circ h^1_{i_1}\circ\ldots \circ R^{n+1}_{i_{n+1}}\circ h^{n+1}_{i_{n+1}}(x_{n+1}))
\end{eqnarray*}

But by \eqref{eq:Fandhj} we have
$$
d(R^{n+1}_{i_{n+1}}\circ h^{n+1}_{i_{n+1}}(x_{n+1}),R\circ F_g(x_{n+1}))\le 8\xi^{n+1}_{i_{n+1}} 
$$
so by \eqref{eq:deltanL} we obtain
$$
d(R^1_{i_1}\circ h^1_{i_1}\circ\ldots \circ R^n_{i_n}\circ h^n_{i_n}(x_n),R^1_{i_1}\circ h^1_{i_1}\circ\ldots \circ R^{n+1}_{i_{n+1}}\circ h^{n+1}_{i_{n+1}}(x_{n+1}))<c_{j_n}^n
$$
Using telescoping sum, this gives for any $\eps>0$ and $n$ sufficiently large (here $\Theta(\hat x,g)_1$ denotes the natural projection to the first coordinate) 
$$
d(R^1_{i_1}\circ h^1_{i_1}\circ\ldots \circ R^n_{i_n}\circ h^n_{i_n}(x_n),\Theta(x,g)_1)<\sum_{l\geq n}c_{j_l}^l<2c_{j_n}^n<\eps
$$
Note that the previous estimate is true for any $g\in \cap^{\infty}_{i=1} A_{j_{i}}^l$
and every $\hat x$ such that $(\hat x,g)\in \hat D$. Thus as a consequence, for $\delta$ sufficiently small, all $f,g\in \cap^{\infty}_{i=1} A_{j_{i}}^l$ and $d(\hat x,\hat y)<\delta$ we have
$$
d(R^1_{i_1}\circ h^1_{i_1}\circ\ldots \circ R^n_{i_n}\circ h^n_{i_n}(x_n),R^1_{i_1}\circ h^1_{i_1}\circ\ldots \circ R^n_{i_n}\circ h^n_{i_n}(y_n))<\eps.
$$
As a result, under the above assumptions, we get 
$(\Theta(\hat x,g)_1),\Theta(\hat y,f)_1)<3\eps$
which proves that $\Theta$ is continuous.

For each $f$ we define a homeomorphism  $\Phi_f:=\Theta_f \circ \hat{\mathcal{F}}(\cdot,f) \circ \Theta_f^{-1}\colon D\to D$. Abusing the notation, for the following inverse limit spaces we will identify $f$ with the interval map $f|_{I}$. 
Denote $\Lambda_f:=\Phi_f(\hat I_f)$, where $\hat I_f:=\underleftarrow{\lim}(I,f)$ and note that by Corollary~\ref{cor:MincTransue} the attractor $\Lambda_f$ is the pseudo-arc for every $f\in A$.
We can write $\Phi:=(\Theta\times \id) \circ \hat{\mathcal{F}} \circ (\Theta\times \id)^{-1}$ and put $\Phi_f=\Phi(\cdot, f)$
showing that the family $\Phi_f$ is varying continuously. This also shows that the family $\{\Lambda_f\}_{f\in A}$ varies continuously in Hausdorff metric.

 By Theorem~\ref{thm:5.2}, let $\hat{\mu}_f$ be an invariant measure induced on the inverse limit $\hat I_f$ using Lebesgue measure $\lambda$ on $I$ and define a push-forward measure $\mu_f=(\Theta_f)_* \hat{\mu}_f$.
Formally, the measure $\hat{\mu}_f$ is defined on the space $\hat I_f\subset \underleftarrow{\lim}(D,f)$,
however we can also view it as a measure on the space $\hat{D}$.
Let us show that measures $\hat{\mu}_f$ vary continuously in the weak* topology in $\hat{D}$. By the definition $\hat \mu_f(\pi_n^{-1}(B))=\lambda(B\cap I)$ 
for any Borel set $B\subset D$ and every $n\in \mathbb{N}_0$ (this measure can be viewed in $\hat{D}$ on a ``slice'' defined by $f$; for more detail see \cite{BdCH1}).
Take any uniformly distributed finite set $P\subset I$ and for any interval map $g$
define a finite set $\hat{P}_g\subset \underleftarrow{\lim}(I,g)$ such that $\pi_n(\hat{P}_g)=P$. 
Denote $\nu:=\frac{1}{|P|}\sum_{q\in P}\delta_q$
and $\hat \nu_g:=\frac{1}{|\hat{P}_g|}\sum_{\hat q\in \hat{P}_g}\delta_{\hat q}$.
Fix any $\eps>0$ and let us assume that $2^{-n}<\eps/2$. 
There is $\gamma>0$
such that if $\rho(f,g)<\gamma$, $\hat x,\hat y\in \underleftarrow{\lim}(I,g)$
and $d(\pi_n(\hat x),\pi_n(\hat y))<2\gamma$ 
then $d(\hat x,\hat y)<\eps$.
We may also assume that for any two consecutive points $p,q\in P$ with respect to the ordering in $I$ we have $d(p,q)<\gamma/2$ and that $\gamma>0$ is sufficiently small so that if $\hat x\in \underleftarrow{\lim}(I,g)$, $\hat y\in \underleftarrow{\lim}(I,f)$
satisfy $\pi_n(\hat x)=\pi_n(\hat y)$, then $d(\hat x,\hat y)<\eps$.

Note that since $\rho(g,f)<\gamma$ we have
\begin{eqnarray*}
\hat \mu_g(\hat{B})\leq \lambda(\pi_n(\hat{B}))&\leq& \lambda(\cup_{q\in P,(q-\gamma,q+\gamma)\cap \pi_n(\hat{B})\neq \emptyset}(q-\gamma,q+\gamma))\\
&\leq& \nu(B(\pi_n(\hat{B}),2\gamma))=\hat{\nu}_g(\pi_n^{-1}(B(\pi_n(\hat{B}),2\gamma))
\end{eqnarray*}
If $\hat{q}\in \hat{P}$ satisfies $\hat{q}\in \pi_n^{-1}(B(\pi_n(\hat{B}),2\gamma)$
then there is $\hat{z}\in \hat{B}$ such that $d(q,\pi_n(\hat{z}))<2\gamma$
and therefore $d(\hat{z},\hat{q})<\eps$. This gives
$$
\hat \mu_g(\hat{B})\leq \hat{\nu}_g(B(\hat{B},\eps))
$$
and therefore $D(\hat \mu_g,\hat \nu_g)<\eps$ (recall that $D(\cdot,\cdot)$ denotes the Prokhorov metric on $M(I)$ defined in the end of Subsection~\ref{subsec:MTpreliminaries}).
Clearly, for every $\hat{q}\in \hat{P}_g$ there is $\hat{p}\in \hat{P}_f$ such that $\pi_n(\hat{q})=\pi_n(\hat{p})$
and therefore $D(\hat \nu_f,\hat \nu_g)<\eps$.
This gives that $D(\hat \mu_f,\hat \mu_g)<3\eps$
provided that $\rho(f,g)<\gamma$.
Indeed, the function $f\mapsto \hat{\mu}_f$ is continuous.

If $\alpha\in C(D, \mathbb{R})$, then by identifying $\Theta$ to the projection on the first coordinate $\alpha\circ \Theta\in  C(\hat D,\mathbb{R})$ and we have already proven that for any $f_i\to f$ from $A$,
$$
\int_{\hat D} \alpha \circ \Theta d\hat \mu_{f_i} \to \int_{\hat D} \alpha \circ \Theta d\hat \mu_{f}.
$$

We therefore have that 
$$
\int_{D} \alpha d \mu_{f_i} \to \int_{D} \alpha d \mu_{f}.
$$
This proves the continuity of the map $f\mapsto \mu_f$ where each $\mu_f$ is the push-forward measure on $D$ defined by $\mu_f:=\Theta_* \hat{\mu}_f$.
It is clear from the definition, that the support of $\mu_f$ is $\Lambda_f\subset D$.
\end{proof}

\section*{Acknowledgements}
J. \v C. was supported by the Austrian Science Fund (FWF) Schrödinger Fellowship stand-alone project J 4276-N35 and partially by the IDUB project no. 1484 ``Initiative for Excellence - Research University'' at AGH UST.
P. O. was supported by National Science Centre, Poland (NCN), grant no. 2019/35/B/ST1/02239. For the purpose of Open Access the authors have applied a CC BY public copyright licence to any Author Accepted Manuscript version arising from this submission.
 We are very grateful to J. P. Boro\'nski, H. Bruin, A. de Carvalho and J. C. Mayer for helpful comments on the early version of the paper.  We would like to thank the anonymous referee whose insightful suggestions substantially improved several parts of the paper.

\end{document}